\newtheorem{theorem}{Theorem}
\newtheorem{proposition}[theorem]{Proposition}
\newtheorem{definition}[theorem]{Definition}
\newtheorem{condition}[theorem]{Condition}
\newtheorem{lemma}[theorem]{Lemma}
\newtheorem{corollary}[theorem]{Corollary}
\theoremstyle{remark}
\newtheorem*{remark}{Remark}
\newcommand{\spam}{\mathop{\mathrm{span}}}
\newcommand{\supp}[1]{{\operatorname{supp}}({#1})}
\newcommand{\ints}{\mathbb{Z}}
\newcommand{\nats}{\mathbb{N}}
\newcommand{\reals}{\mathbb{R}}
\newcommand{\comps}{\mathbb{C}}
\newcommand{\dif}{\mathrm{d}}
\newcommand{\diam}{\operatorname{diam}}
\newcommand{\dist}{\operatorname{dist}}
\renewcommand{\L}{{L}}
\newcommand{\nchoosek}[2]{\displaystyle \binom{#1}{#2}}
\newcommand{\bfg}{\mathbf{g}}
\newcommand{\bfh}{\mathbf{h}}
\newcommand{\C}{\mathsf{C}}
\newcommand{\B}{\mathsf{B}}
\newcommand{\sC}{\mathsf{C'}}
\newcommand{\sB}{\mathsf{B'}}
\newcommand{\CL}{\mathsf{S}}
\newcommand{\BL}{\mathsf{R}}
\newcommand{\sCL}{\mathsf{S'}}
\newcommand{\sBL}{\mathsf{R'}}
\newcommand{\M}{\mathsf{M}}
\newcommand{\Deltad}{\Delta_{{d-1}}}
\renewcommand{\b}{\mathsf{b}}
\renewcommand{\c}{\mathsf{c}}
\newcommand{\V}{\mathbf{\mathcal{V}}}
\newcommand{\T}{\mathcal{T}}
\newcommand{\Riesz}{\mathcal{R}}
\newcommand{\riesz}{\mathrm{r}}
\newcommand{\Space}{\mathcal{L}}
\newcommand{\Err}{\mathcal{E}}
\def\dotp{\operatorname{\bm{\cdot}}}
\newcommand{\oddq}[1]{\operatorname{odd}(#1)}
\newcommand{\Oddq}[1]{\lceil #1/2 - \lceil (#1-1)/2\rceil \rceil}
\newcommand{\diag}[1]{\operatorname{diag}\left(#1\right)}
\newcommand{\id}{\mathbbm{1}}
\newcommand{\Sym}{\mathfrak{S}}
\newcommand{\demph}[1]{\emph{#1}}
\newcommand{\ds}{\displaystyle}
\def\qand{\quad\hbox{and}\quad}
\newcommand{\sfp}{\mathsf{p}}
\newcommand{\sfq}{\mathsf{q}}
\newif\ifhrule\hrulefalse
\newcommand{\ChrPath}[1][myred]{\psline[linewidth=0.05,linecolor=#1]{->}}
	\definecolor{myred}{rgb}{1,0,0}
	\definecolor{myblue}{rgb}{0,0,1}
	\definecolor{mygreen}{rgb}{0,1,0}
	\definecolor{myorange}{rgb}{.9,.4,0}
	\definecolor{mypink}{rgb}{1,.5,.5}
\numberwithin{equation}{section}
\numberwithin{theorem}{section}
\title{The Polyharmonic Dirichlet Problem and Path Counting}
\author{Thomas Hangelbroek}
\address{Department of Mathematics, University of Hawaii, Honolulu, HI 96822, USA. } 
\email{hangelbr@math.hawaii.edu}
\thanks{Thomas Hangelbroek was supported by grant DMS-1232409 from the National
    Science Foundation.}
\author{Aaron Lauve}
 \address{Department of Mathematics, Loyola University Chicago, Chicago, IL 60660, USA.}
\email{lauve@math.luc.edu}
\thanks{Aaron Lauve was supported by grant 98230-12-1-0286 from the National Security Agency.}
\begin{document}

\begin{abstract}
The purpose of this article is to provide a solution to the
$m$-fold Laplace equation in the half space $\reals^d_+$ under certain Dirichlet
 conditions. The solutions we present are a series of $m$ boundary layer potentials.
We give explicit formulas for these layer potentials as  linear combinations of 
powers of the Laplacian 
applied to the Dirichlet data, with coefficients determined by certain path counting problems.
\end{abstract}

\maketitle

%
%
\section{Introduction}\label{Introduction}
In this article we consider a class of polyharmonic Dirichlet problems in the half-space $\reals^d_+:=\reals^{d-1}\times [0,\infty)$. Specifically, for $m>d/2$, we consider
%
\begin{equation}\label{Dirichlet}
\begin{cases} 
  \Delta^m u(x) = 0,& \text{for $x\in \reals^{d}_+$};\\
  \lambda_k u = h_k,& \text{for }k=0,\dots, m-1.\\
\end{cases}
\end{equation}
%
where the $m$ boundary conditions are given by 
%
\begin{equation}\label{DirichletOperators}
\lambda_k u(x):= 
\begin{cases} 
  \Delta^{\frac{k}{2}}u (x_1,\dots,x_{d-1},0),& \text{for $k$ even},\\
  - \frac{\partial}{\partial x_d} \Delta^{\frac{k-1}{2}}u (x_1,\dots,x_{d-1},0),& \text{for $k$ odd}.
\end{cases}
\end{equation}
%
Our interest is to describe 
solutions to \eqref{Dirichlet} with $m$ simple boundary layer potentials. Specifically, we seek solutions of 
the form 
%
%
\begin{equation}\label{BLP_Solution}
u(x) = \sum_{j=0}^{m-1} \int_{\partial \reals_+^{d}}g_j(y) 
k_j(x,y)
 \dif \sigma(y)
\end{equation}
where the kernel
$k_j(x,y) :\reals^d\times \partial \reals_+^{d}\to \reals $ is 
defined as $k_j(x,y) :=\lambda_{j,y} \phi(x-y)$
and
$\phi$ is a fundamental solution to $\Delta^m$ in $\reals^d$.
In this case, we wish to find the \emph{auxiliary} functions 
$\bfg = (g_j)_{j=0}^{m-1}$, defined on $\reals^{d-1} \sim \partial \reals_+^{d}$. 
The motivation for this comes from approximation theory, and is discussed below in Section \ref{Motivation}.

The problem \eqref{Dirichlet} is a longstanding one in the theory of elliptic PDE.
In 1905, a closed formula for the Green's function for the complementary problem 
(with $\Delta^mu=f$ and homogeneous boundary data $h_k=0$) 
was presented by Boggio  in \cite[Eqn. (48)]{Boggio}.
This Green's function can be extended to the half space without much difficulty (see \cite[Eqn. (3.1)]{ReiWeth} for instance), and from there a Poisson kernel can be obtained by taking suitable high order derivatives of the Green's function 
(of course, general constant coefficient elliptic boundary value problems of all kinds have Poisson kernels via 
\cite[Eqn. (2.1)]{ADN}).
This problem and its variants  (in  bounded regions, with alternative boundary values, etc.) continue to receive much attention 
(we mention \cite{AmRaud, DallSweers,
MM,ReiWeth} 
as a very abbreviated list of recent works).
For an introduction to this vast topic and many other related ones we direct the reader to
the recent monograph of Gazzola, Grunau and Sweers \cite{GGS}.

Requiring the solution of \eqref{Dirichlet} to take the form \eqref{BLP_Solution} converts this problem into an integral equation.
We seek to match the Dirichlet data $\mathbf{h} = (h_k)_{k=0}^{m-1}$ 
with boundary values of  the \emph{boundary layer potentials}
$\sum_{j=0}^{m-1} \int_{\reals^{d-1}}g_j(y) \lambda_{j,y} \phi(x-y) \dif \sigma(y)$.
In short, we wish to solve $\V \bfg=\mathbf{h} $ where  $\V$ is 
an $m\times m$ matrix  of integral operators on $\reals^{d-1}$.
We note that this set up has been considered in 
planar domains for $m=2$ by Chen and Zhou \cite[Chapter 8]{ChZh} and Costabel and Dauge \cite{CD}.

Our goal is to solve this system explicitly for modest assumptions on the Dirichlet data $\mathbf{h}$. 
The solution involves using Fourier analysis to recast this as a multiplier problem, and to identify the matrix \emph{symbol} of the multiplier equation.
Namely, we consider the Fourier transform $\widehat{\mathbf{h}}(\xi) = \mu(\xi) \widehat{\bfg}(\xi)$ where
$\mu = \sigma(\V):\reals^{d-1}\to \comps^{m\times m}$ is a Fourier multiplier.
This multiplier can be factored $\mu(\xi) = D_1(\xi) \M D_2(\xi)$ as  a product of simple diagonal multipliers $D_1$ and $D_2$
(having entries that are, essentially, integral powers of $|\xi|$), and a matrix
of constant entries $\M$. 

The invertibility of the system hinges on the matrix $\M$, which is shown to have a checkerboard pattern: zeros in the
odd entries (i.e., $\M_{ij} = 0$ when  $i+j$ is odd) and a two-block structure in the even entries. These blocks are Hankel matrices $\B$ and $\C$ of binomial coefficients and Catalan numbers, respectively. 
At this point our second objective emerges: the entries in either block 
count minimal lattice paths in certain planar graphs. 
In the Catalan case, these are Dyck paths;
for the binomial block we have different but related paths. 

Developing a connection to these lattice paths allows interesting observations
about the fundamental matrix $\M$. 
In particular, its minors count certain non-intersecting paths in slightly modified graphs---these are easily
shown to be positive, and thus the underlying matrix is totally non-negative ($\B$ and $\C$ are totally positive).
Moreover, the path counting perspective easily reveals that the determinant of $\M$ is a power of $2$ (hence $\M$ is invertible, hence $\widehat{\mathbf g}$ is uniquely determined from $\widehat{\mathbf h}$). 
The problem of determining the inverse of $\M$  (for all $m$) is a challenging combinatorial problem, but essential for obtaining the closed form solution $\bfg$.
A solution to this problem can be phrased in terms of path counting, but finding it by this method proves quite difficult. We determine explicit formulas for the entries of $\M^{-1}$ by other means. 

%
%
\subsection{Motivation}\label{Motivation}
%
The motivation for this problem comes from surface spline approximation and interpolation,
introduced in the 1970's by Duchon \cite{D1} and Meinguet \cite{Meinguet} and further developed  
by many others. 
This is an approach to multivariate approximation where elementary functions are constructed by taking linear 
combinations of scattered translates of the fundamental solution $\phi$ for $\Delta^m$ in $\reals^d$, 
perhaps adding a polynomial term of low degree,
$$ s_{\Xi} (x)= \sum_{\xi\in\Xi} A_{\xi} \phi(x - \xi)+p(x).$$
This is an appealing methodology because such elementary functions can be constructed and evaluated directly 
(without imposing additional structure like triangulations or grids).

One reason for the popularity of this method is  its effectiveness in treating scattered data: 
as an example, consider $(z_{\xi})_{\xi\in\Xi}$ sampled from a continuous function $f$ on the set 
$\Xi\subset \reals^d$ (i.e., $z_{\xi} = f(\xi)$ for some unknown target function $f$). One can attempt
to match $f$ by a function of the form $s_{\Xi}$ \footnote{Note that in this example the set of centers $\Xi$ and the set of sampling points coincide---this is not strictly necessary.}
for instance, by interpolation: 
solving the linear system $s_{\Xi} (\xi) = z_{\xi}$ for $\xi\in \Xi$. 
(By now the reader may have remarked that we require $m>d/2$---one reason for this is to ensure that such an interpolation problem is well-posed, since  $m>d/2$ implies that $\phi$ is continuous.)

When $d=2$ and $m=2$, this is known as {\em thin plate spline} interpolation, because of the connection
of the bi-Laplacian to the elasticity theory for idealized thin plates.
In this case, the interpolation problem can be viewed as fixing the position of a thin plate at some points 
$(\xi,z_{\xi})\in \Xi\times \reals$ in space---the value of the interpolant $s_{\Xi}(x)$ gives the height of this plate at the point $x\in\reals^2$.
Likewise, the Dirichlet problem \eqref{Dirichlet}  in this case can be viewed as a {\em clamped} plate problem: position and outer normal derivative of an idealized plate are fixed by the boundary values $h_k$ along $\reals\times\{0\}$; $\Delta^mu=0$ encodes the elasticity constraints; and the value of the solution $u(x)$ gives the displacement of this plate at $x\in\reals^2$ (see \cite[Chapter 8]{ChZh} or \cite[Chapter 1]{GGS} for a discussion of the thin plate model).

A major challenge for this approximation method deals with negative {\em boundary effects}, occurring when the points $\Xi$  lie within a region $\Omega$ --in other words,  on {\em one side} of  the boundary $\partial \Omega$.
In this case, the error between the approximant and the target function in a neighborhood of the boundary 
is larger than elsewhere. As the spacing of $\Xi$ becomes finer  in $\Omega$, the rate of 
decay of this error  can be quantified;
it is substantially slower than in the interior of $\Omega$.
A satisfactory method for understanding and overcoming the boundary effects for thin plate spline approximation ($m=2$, $d=2$) was treated  in \cite{Hdisk}, where a new approximation scheme was developed using
an integral representation for smooth functions that uses a minimal number of boundary layer potentials. 
This representation takes the form
%
 \begin{equation*}
 f(x) 
 = 
 \int_{\Omega} 
     \Delta^2 f(y) \phi(x-y) 
 \dif y 
 +  
 \int_{\partial \Omega} 
     \left( 
         N_3 f(y) \lambda_{0,y} \phi(x-y) 
         +
         N_2 f(y) \lambda_{1,y} \phi(x-y)  
     \right)
 \dif \sigma(y)
 +
 p(x),
 \end{equation*}
%
 with $\phi$ the fundamental solution for $\Delta^2$ in $\reals^2$, $p$ a polynomial of degree at most $1$, and
 $N_2$ and $N_3$ are pseudodifferential operators on the boundary.\footnote{This representation should be compared to ``Green's representation'' from potential theory. 
The crucial difference is the presence of extra boundary integrals involving higher order derivatives of the kernel $\phi$ (and therefore use kernels that are more singular).}
 
Developing higher order and higher dimensional versions of such representations are necessary to create more general (higher order, higher dimensional) approximation schemes. These representations, in turn, follow from considering the structure of solutions to the Dirichlet problem for arbitrary spatial dimensions $d\ge 2$ and orders $m>d/2$.
 
Finally, the study of the polyharmonic Dirichlet problem is closely related to a number of other applications. 
The problem of  {\em transfinite interpolation} (see, e.g., \cite{Bej}), modifies the basic interpolation setup to treat (non-discrete) data on curves and surfaces. 
Dirichlet problems for polyharmonic and other equations often play a key role in image {\em inpainting} in 
digital image analysis  \cite{BBBW}. 
The solution of polyharmonic Dirichlet problems on rectangular regions underly the signal and image processing approach considered  Saito and his colleagues, \cite{SaitoZhao,SaitoRemy}. 
Kounchev has developed {\em polyspline approximation}
(using piecewise polyharmonic functions with prescribed smoothness conditions), which has been used to treat a variety of problems in numerical analysis, approximation theory and wavelet theory. For an introduction, we cite
the manuscript \cite{Koun}, which has a substantial bibliography (although by now this subject has grown considerably).
 
%
%
\subsection{Solution}\label{Solution}
 Under suitable conditions (i.e., smoothness, decay and vanishing moment conditions to be elaborated in the following sections) 
 on Dirichlet data $\bfh$, the Dirichlet problem has solution $\bfg$ with
%
\begin{equation*}
 g_j 
 = 
 \sqrt{-\Delta_{d-1}}
 \sum_{\substack{k=0\\k+j\in 2\ints}}^{m-1} 
     (\M^{-1})_{j,k}(\Delta_{d-1})^{m - 1- (j+k)/2} h_k \,,
\end{equation*}
%
 where $\Delta_{d-1}= \sum_{\ell=1}^{d-1}\frac{\partial^2}{\partial x_\ell^2}$ is the Laplacian on $\reals^{d-1}$. We note
 that the auxiliary boundary functions with odd indices depend only on the odd boundary values $h_{k}$ and likewise, the even $g_j$ 
 depend only on boundary data $h_k$ with even indices $k$.
Moreover, the coefficients $(\M^{-1})_{j,k}$ have simple, explicit formulations which are given in Section 
\ref{inverting_M}.

%
%
\subsection{Outline} \label{Outline}
We have organized this article in the following way.
In Section \ref{The Dirichlet Problem}, we set some basic notation and discuss some background from analysis.
%
In Section \ref{Boundary Layer Potential Operators} we analyze the boundary layer potentials,
 their regularity and their frequency representation. In this section we make conditions on the auxiliary functions $\mathbf{g}$ that permit the boundary layer potential solution \eqref{BLP_Solution} to have a manageable Fourier transform.
 %
 Section \ref{Boundary values of the boundary layer potentials} is concerned with understanding the boundary values of the boundary layer potential solution---the main result in this section is the identification of the matrix symbol of the operator $\mathcal{V}$ mapping auxiliary boundary functions $\bfg$ to boundary data $\mathbf{h}$.
 %
 
 Section \ref{sec: path counting and invertibility} 
relates coefficients from this symbol to the problem of counting paths in certain graphs, and shows
that various properties of the symbol can be related to counting non-intersecting paths. 
%
Section \ref{Invertibility of the boundary value operator} presents our main results, which includes the closed form representation of the map from Dirichlet data $\mathbf{h}$ to
boundary functions $\bfg$.
%
Section \ref{inverting_M} is concerned with inverting the binomial and Catalan matrices that form the blocks $\B$ and $\C$ of $\M$.  This involves computing their $LDL^T$ (Cholesky) decompositions and providing a closed form formula for the triangular matrix $L^{-1}$.
%
Section \ref{Decay of Riesz transforms} treats the Riesz transform. We demonstrate that the Riesz transform of a  function with many vanishing moments decays rapidly. Although
this is well known for compactly supported functions, we must extend this result to treat functions with algebraic decay.
%
%
The final section treats two extensions of the method we've presented, treating problems with different boundary conditions, and different elliptic, constant coefficient differential operators.

%
%
\section{Background and basic assumptions}\label{The Dirichlet Problem}
\subsubsection*{Points in $\reals^d$}We denote the inner product of two points $x,y\in \reals^d$ by $\langle x,y\rangle$ and the Euclidean norm of $x\in \reals^d$ by $|x|$. The ball of radius $\rho$ centered at $c\in\reals^d$ is denoted $B(c,\rho)$. Often we identify  the hyperplane $\partial \reals_+^{d}$ with $\reals^{d-1}$, and for 
an integral  of $f:\reals^d\to \comps$ taken with respect to surface measure on $\partial \reals_+^{d}$ we 
simply write $\int_{\reals^{d-1}} f(y) \dif \sigma(y)$.
We frequently make use of the decomposition 
$x =(x',x_d)\in \reals^d$ with $x_d\in\reals$ and $x'$ the projection onto $\reals^{d-1}$.

\subsubsection*{Multi-integers} For a multi-integer $\alpha = (\alpha_1,\dots,\alpha_d) \in \nats^d$,  
we denote the size $|\alpha| = \sum_{j=1}^d \alpha_j$ (this should not be confused with the Euclidean norm 
$|x|$ of $x\in \reals^d$---the context will make clear what is the proper meaning of $|\cdot|$). These are used to denote monomials, $x^{\alpha} = \prod_{j=1}^d x_j^{\alpha_j}$ and partial derivatives $D^{\alpha} = \prod_{j=1}^d \frac{\partial^{\alpha_j}}{\partial x_j^{\alpha_j}}$.

\subsubsection*{Spaces of functions}
The space $\spam_{|\alpha|\le L} x^{\alpha}$ consisting of polynomials of degree $L$ or less  is denoted 
by $\Pi_L = \Pi_L(\reals^d)$.
For a measurable subset $\Omega \subset \reals^d$ and $1\le p\le \infty$,  $L_p(\Omega)$ indicates the Banach space of (equivalence classes of) functions for which $\int_{\Omega} |f(x)|^p \dif x<\infty$ (or $\mathrm{ess} \sup |f(x)|<\infty$ in case $p=\infty$.) For an open set $\Omega$, $1\le p\le \infty$ and $k\in \nats$, the Sobolev space 
$W_p^{k}(\Omega) = \{f\in L_p(\Omega)\mid \text{for }|\alpha|\le k, \ D^{\alpha} f \in L_p(\Omega)\}$ 
is the subspace of $L_p(\Omega)$ for which all distributional derivatives of order 
$k$ or less are also in $L_p(\Omega)$. When $p=\infty$, we prefer the space $ C^{k}(\reals^d)$ of functions having continuous partial derivatives of order $k$.

Given $\epsilon>0$, the function $f$ is H{\"o}lder continuous  with exponent $\epsilon$ at a point $x$ if 
the pointwise H{\"o}lder coefficient 
$[f]_{\epsilon,x} := \sup_{|y-x|\le 1} \frac{|f(y) -f(x)|}{ |y-x|^{\epsilon}}$
is finite. 
For non-integer $s>0$,  the H{\"o}lder space $C^s(\reals^d)$ consists of the functions 
$f\in C^k(\reals^d)$  with $k= \lfloor s\rfloor$ 
so that for each multi-index $|\alpha|=k$, 
$D^{\alpha}f$ is H{\"o}lder continuous  with exponent $\epsilon = s-k$. 
The H{\"o}lder seminorm of $f$ is denoted by
$$
|f|_{C^s(\Omega)} : = \begin{cases} 
\max_{|\alpha|=k} \sup_{x\in \Omega}\left|D^{\alpha} f(x)\right|_{\infty}, &\text{for } s=k\in \ints\\
\max_{|\alpha|=k} \sup_{x\in\Omega} \left[D^{\alpha} f\right]_{\epsilon,x}, &\text{for } s\notin \ints.
 \end{cases}
 $$

\subsubsection*{Spaces simultaneously capturing smoothness and decay} On top of these classical functions spaces which capture smoothness, we need also to encode decay properties of the function.
We can 
describe basic assumptions on the Dirichlet data 
with the help of a  family of Banach spaces:
for  $J>0$ and $s,\delta \ge 0$, define $\Space_{J,\delta}^s = \Space_{J,\delta}^s(\reals^d)$ 
to be the class of functions $f\in C^s(\reals^d)$  for which
\begin{equation}\label{appendix_decay}
\|f\|_{\Space_{J,\delta}^s}:=
\sup_{0\le \sigma\le s}
\sup_{\rho\ge 0}
\left( 
  (1+\rho)^{J+\sigma}  
  \bigl( \log \bigl(e +\rho \bigr)\bigr)^{\delta}
  \left|f \right|_{C^{\sigma}\bigl(\reals^{d}\setminus B(0,\rho)\bigr)}
\right)
\end{equation}
is finite.

It is not hard to see that  if $s_1<s_2$ then 
$\Space_{J,\delta}^{s_2}\subset \Space_{J,\delta}^{s_1}$.
Likewise if $J_1<J_2$ then  
$\Space_{J_2,\delta_a}^{s}\subset \Space_{J_1,\delta_b}^{s}$ 
(this holds regardless of $\delta_a$ and $\delta_b$).
When $s=0$, we simply write $\Space_{J,\delta}=\Space_{J,\delta}^s$.
For $f\in\Space_{J,\delta}$, 
the estimate
\begin{equation*}
|f(x)|\le \|f\|_{\Space_{J,p}}(1+|x|)^{-J} \bigl( \log \bigl(e + |x|\bigr)\bigr)^{-\delta}
\end{equation*} 
holds. 
Moreover, when  $J\ge d+ L$ (assuming $\delta>1$ in the case of equality $J=d+L$), 
the integrals 
$\langle f,p\rangle = \int_{\reals^d} f(x) p(x) \dif x $ for $p\in\Pi_L$
are well-defined. 

%
%
\subsubsection*{Fourier transform and multipliers}\label{Fourier transform}
We  define the Fourier transform for $L_1(\reals^d)$ functions as
 \begin{equation*}
 \widehat{f} (\xi) 
 := 
 \int_{\reals^d} f(x) e^{-i\langle x,\xi\rangle} \dif x,
 \end{equation*} 
 and extend to tempered distributions in the usual way. In a similar way, we denote the inverse 
 Fourier transform of an $L_1$ function $g$  by 
 $g^{\vee}(x) = (2\pi)^{-d}  \int_{\reals^d} g(\xi) e^{i\langle x,\xi\rangle}\, \dif \xi$.
 For functions of sufficient smoothness and rapid decay, the inversion theorem gives that
 $f(x) = (2\pi)^{-d} \int_{\reals^d}  \widehat{f} (\xi) e^{i\langle x,\xi\rangle} \, \dif \xi.$
 
 We make use of a class of linear operators called multiplier operators. The study of such operators, which include convolution and constant coefficient differential operators, is an important and enduring aspect of harmonic analysis. For our purposes, these are linear operators defined  as
 $$ f \mapsto (2\pi)^{-d} \int_{\reals^d}  \mu(\xi) \widehat{f} (\xi) e^{i\langle x,\xi\rangle} \, \dif \xi.$$
 The function $\mu:\reals^d\to \comps$ is referred to as the \demph{symbol}. 
For an operator $v$, the  symbol is denoted by $\sigma(v) = \mu$.

Of particular interest are operators with a matrix symbol. This is  the obvious generalization to vector valued functions $\bfg=(g_1,\dots,g_{M})\in L_1(\reals^d,\comps^M)$ of the previous concept. In this case, we consider an operator $\mathcal{V}$ with matrix symbol 
$\sigma(\mathcal{V}) = (\mu_{k,j})
: \reals^d\to \comps^{N\times M}$.
In other words, for $j=1\dots M$ and $k=1\dots N$, each entry of the symbol 
is $\mu_{k,j}:\reals^d \to  \comps$, and the operator is
$\bfg \mapsto \mathcal{V} \bfg = 
\left((2\pi)^{-d} \sum_{j=1}^{M}
  \int_{\reals^d}\mu_{k,j}(\xi)\widehat{g_j}(\xi)e^{i\langle x,\xi\rangle} 
\dif \xi
\right)_{k=1,\dots,N}$.

\section{Boundary Layer Potential Operators}\label{Boundary Layer Potential Operators}
We seek solutions to \eqref{Dirichlet} comprising $m$ boundary layers using the potential function
$\phi$, where
%
\begin{equation*}
\phi(x)
=\
\phi_{m,d}(x)
:=
C_{m,d}
\begin{cases}
  |x|^{2m-d}\log|x|, &\text{for $d$ even and }2m\ge d,\\
  |x|^{2m-d}, &\text{otherwise}
\end{cases}
\end{equation*}
%
is a fundamental solution of $\Delta^m$ in $\reals^d$.
This result, and the precise value of the constant $C_{m,d}$ for which $\Delta^m \phi_{m,d} = \delta$ 
can be found in \cite[(2.11)]{Poly}. 
We note that $\Delta^j \phi_m = \phi_{m-j}$. 

Our goal is to write the solution of \eqref{Dirichlet} in the form
%
%
\begin{equation}\label{3:BhRep}
u(x) 
= 
T{\bfg}(x) 
:=
\sum_{j=0}^{m-1} 
    \int_{\partial \Omega} 
           g_j(y) \lambda_{j,y}\phi(x-y)\, 
    \dif\sigma(y).
\end{equation}
We begin our analysis of the solution $T{\bfg}$ by considering its
constituent functions, the boundary layer potentials 
$\int_{\reals^{d-1}} g_j(y) \lambda_{j,y} \phi(\cdot - y) \dif \sigma(y)$.
We express such functions in two key ways.
First, we consider them as integral operators applied to functions supported on $\reals^{d-1}$:
$$V_j g_j (x):=  \int_{\reals^{d-1}} g_j(y)k_j(x,y)\, \dif\sigma(y)
\quad \text{with}\quad k_j(x,y) :=\lambda_{j,y} \phi(x-y).$$
Second, we consider them as the convolution of $\phi$ with distributions supported
on $\partial \reals_+^{d}$, which leads to their characterization via the Fourier transform.

%
%
\subsection{Boundary layer potential operators}\label{Boundary layer potential operators}
One observes, via a direct computation, that
as $|x-y|\to 0$, 
 $k_j(x,y) = \mathcal{O}(|x-y|^{2m-d-j}\log{|x-y|})$.
Indeed, we can compute $D^{\beta} \phi(x)$ explicitly, as indicated by the following
 formula 
 (this has already been observed in
 \cite[Eqn. (51)]{Boggio} and perhaps earlier).
For any multi-index $\beta$, a direct computation shows that
%
%
\begin{equation}\label{fs_derivs}
D^{\beta} \phi(x)  
= 
\sfp_{2m-d-|\beta|}(x)
\bigl(\log|x|^2\bigr)
+
\sfq_{2m-d-|\beta|}(x) ,
\end{equation}
%
where $\sfp_{\gamma}$ and $\sfq_{\gamma}$ are  homogeneous functions of degree $\gamma$. 
From this, it follows that 
$
|D^{\beta} \phi(x)| 
\lesssim  
|x|^{2m-d -|\beta|}
(1+ \log|x|).
$ 
In fact, if $d$ is odd or if $|\beta|>2m-d$, the homogeneous
function $\sfp_{2m-d-|\beta|}$ vanishes.
This leads to the following key observation.
%
%
\begin{lemma}\label{Hom_decay} 
For a multi-index $\alpha$, with $|\alpha|>2m-d$, the derivative
$D^{\alpha} \phi (x)$ is homogeneous of degree $2m-d-|\alpha|$. In particular, there is
a constant $C$ (depending on $m,d$ and $\alpha$) so that for $|x|>0$,
\begin{equation*}
|D^{\alpha} \phi (x) | 
\le 
C |x|^{2m-d-|\alpha|} .
\end{equation*}
\end{lemma}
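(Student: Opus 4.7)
The plan is to deduce the lemma directly from the formula \eqref{fs_derivs} and the accompanying observation that the homogeneous factor $\sfp_{2m-d-|\alpha|}$ multiplying $\log|x|^2$ vanishes under the hypothesis $|\alpha|>2m-d$. Granting this, the formula collapses to $D^{\alpha}\phi(x)=\sfq_{2m-d-|\alpha|}(x)$, a function homogeneous of degree $2m-d-|\alpha|$ on $\reals^d\setminus\{0\}$. The pointwise bound is then automatic: writing $x=|x|\omega$ with $\omega\in\sphere^{d-1}$, we get $|D^{\alpha}\phi(x)|=|x|^{2m-d-|\alpha|}|\sfq_{2m-d-|\alpha|}(\omega)|$, and the constant $C:=\max_{\omega\in\sphere^{d-1}}|\sfq_{2m-d-|\alpha|}(\omega)|$ is finite because $\sfq_{2m-d-|\alpha|}$ is continuous on the compact sphere.

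The substantive step is thus the vanishing of $\sfp_{2m-d-|\alpha|}$. When $d$ is odd (or $d$ even with $2m<d$), $\phi(x)=C_{m,d}|x|^{2m-d}$ carries no logarithm to begin with, so $\sfp\equiv 0$ for every multi-index, and nothing remains to check. When $d$ is even with $2m\ge d$, write $\phi(x)=C_{m,d}|x|^{2s}\log|x|$ where $2s:=2m-d$. Since $|x|^{2s}$ is a polynomial of degree $2s$, Leibniz's rule yields
\[
D^{\alpha}\phi(x)=C_{m,d}\sum_{\substack{\beta\le\alpha\\ |\beta|\le 2s}}\binom{\alpha}{\beta}\,D^{\beta}|x|^{2s}\cdot D^{\alpha-\beta}\log|x|.
\]
Under the hypothesis $|\alpha|>2s$, every surviving term satisfies $|\alpha-\beta|\ge 1$, so the logarithmic factor is differentiated at least once. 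Since $\partial_j\log|x|=x_j/|x|^2$ and further derivatives preserve rationality and homogeneity with no $\log$ re-emerging, $D^{\alpha-\beta}\log|x|$ is a homogeneous rational function of degree $-(|\alpha|-|\beta|)$. Paired with the polynomial $D^{\beta}|x|^{2s}$ of degree $2s-|\beta|$, each summand is homogeneous of degree $2s-|\alpha|=2m-d-|\alpha|$ and is free of any logarithm. Summing, the coefficient of $\log|x|^2$ in \eqref{fs_derivs} must vanish, giving $\sfp_{2m-d-|\alpha|}\equiv 0$ and identifying $\sfq_{2m-d-|\alpha|}$ with $D^{\alpha}\phi$.

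The only mild obstacle is the bookkeeping in the Leibniz expansion, but nothing genuinely difficult is involved; the threshold $|\alpha|>2m-d$ is precisely what is needed to eliminate the undifferentiated logarithm from every term, yielding a purely homogeneous expression. With the homogeneity in hand, the stated estimate is just the observation that any continuous function homogeneous of degree $\gamma$ on $\reals^d\setminus\{0\}$ is bounded by $C|x|^{\gamma}$ with $C$ equal to its supremum over $\sphere^{d-1}$.
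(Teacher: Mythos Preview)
Your argument is correct and follows the same route as the paper: the lemma is presented there as an immediate consequence of \eqref{fs_derivs} together with the stated fact that $\sfp_{2m-d-|\beta|}$ vanishes when $d$ is odd or $|\beta|>2m-d$. You go a bit further by supplying the Leibniz-rule justification for that vanishing in the even-$d$ case, which the paper only asserts; otherwise the approaches coincide.
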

For a compactly supported distribution $T$, the convolution $\phi*T(x)$ is well-defined for $x\notin \mathrm{supp}(T)$. In this case, we have the following corollary.
\begin{corollary}\label{Convolution_Decay} If $f$ is a compactly supported function and satisfies moment conditions $f\perp \Pi_L(\reals^{d})$ with $L>2m-d$, then as $|x| \to \infty$,
$$|\phi*f(x)| \le \mathcal{O}(|x|^{2m-d-L}).$$
\end{corollary}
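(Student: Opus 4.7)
The plan is to exploit the moment conditions on $f$ by subtracting the Taylor polynomial of $\phi$ about the origin in the $y$ variable, and then to control the Taylor remainder using the homogeneous decay estimate furnished by Lemma \ref{Hom_decay}. Since $f$ has compact support, fix $R>0$ with $\supp{f}\subset B(0,R)$ and consider $|x|>2R$.

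First, for each such $x$ I would Taylor expand the map $y\mapsto \phi(x-y)$ to degree $L-1$ about $y=0$. This yields
\begin{equation*}
\phi(x-y) \;=\; P_{x}(y) \;+\; R_{x}(y),
\end{equation*}
where $P_x\in\Pi_{L-1}(\reals^d)$ (with $x$-dependent coefficients built from $D^\alpha\phi(x)$, $|\alpha|\le L-1$), and the Lagrange remainder takes the form
\begin{equation*}
R_x(y) \;=\; \sum_{|\alpha|=L} \frac{(-y)^\alpha}{\alpha!}\, D^\alpha\phi\bigl(x-\theta(x,y)\, y\bigr)
\end{equation*}
for some $\theta(x,y)\in(0,1)$. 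Since $f\perp \Pi_L(\reals^d)\supset \Pi_{L-1}(\reals^d)$, the polynomial piece integrates to zero, leaving
\begin{equation*}
\phi*f(x) \;=\; \int_{\supp{f}} R_x(y)\, f(y)\, \dif y.
\end{equation*}

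Next I would bound $R_x$. For $|x|>2R$ and $|y|\le R$, the point $x-\theta y$ satisfies $|x-\theta y|\ge |x|/2$. The hypothesis $L>2m-d$ places every multi-index $\alpha$ with $|\alpha|=L$ in the regime of Lemma \ref{Hom_decay}, so $D^\alpha\phi$ is homogeneous of degree $2m-d-L$ away from the origin, and
\begin{equation*}
\bigl|D^\alpha\phi(x-\theta y)\bigr| \;\le\; C\, |x-\theta y|^{2m-d-L} \;\le\; C'\, |x|^{2m-d-L}.
\end{equation*}
Combining this with $|y|\le R$ gives $|R_x(y)|\le C''\, |x|^{2m-d-L}$ uniformly for $y\in\supp{f}$, and integrating yields the claimed bound
\begin{equation*}
|\phi*f(x)| \;\le\; C''\, |x|^{2m-d-L} \int_{\supp{f}} |f(y)|\, \dif y \;=\; \mathcal{O}\bigl(|x|^{2m-d-L}\bigr).
\end{equation*}

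There is no real obstacle: the argument is the standard moment-cancellation/Taylor-remainder trick. The only conceptual point worth emphasizing is that although $\phi$ itself is only bounded by $|x|^{2m-d}(1+\log|x|)$, its derivatives of order $|\alpha|>2m-d$ are genuinely homogeneous (with no log factor), which is exactly what Lemma \ref{Hom_decay} provides and exactly what is needed here to obtain clean algebraic decay.
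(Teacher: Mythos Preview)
Your proof is correct and is precisely the standard Taylor-remainder argument that the paper intends; the corollary is stated there without proof, as an immediate consequence of Lemma~\ref{Hom_decay}, and your argument is the natural way to fill in that step. One small remark: since $f\perp\Pi_L$ (not merely $\Pi_{L-1}$), you could just as well expand to degree $L$ and invoke Lemma~\ref{Hom_decay} for $|\alpha|=L+1$, yielding the slightly sharper decay $\mathcal{O}(|x|^{2m-d-L-1})$; but your version already matches the stated bound and nothing more is needed.
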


At this point we state the first of two restrictions on the domain of $V_j$ which will be used later. 
%
%
\begin{condition}[Decay]\label{firstDecay}
For $j=0,\dots, m-1$ we assume $g_j$ is 
continuous and  there are constants $C>0$ and $\delta>2$ so that   
\begin{equation*}
|g_j (y)| \le C (1+|y|)^{1+j-2m} \bigl(\log(e+|y|)\bigr)^{-\delta}.
\end{equation*}
\end{condition}
%
In other words, we assume  $g_j\in \Space_{2m-j-1,\delta}$ for $j=0,\dots,m-1$.
This condition, in conjunction with formula \eqref{fs_derivs}, ensures that 
$
\int_{\reals^{d-1}} 
    g_j(y) \lambda_{j,y}\phi(x-y) 
\dif\sigma(y)
$ 
is well-defined. 

%
%
\begin{lemma}\label{boundary_values}
For a family of functions $\mathbf{g}=(g_j)_{j=0}^{m-1}$ 
satisfying Condition \ref{firstDecay},
$T\mathbf{g}\in C^{m-1}(\reals^{d})$ and satisfies 
$\Delta^m T\mathbf{g}=0$ 
in 
$\reals^{d}_+$ as well as $\reals^d_-$.
\end{lemma}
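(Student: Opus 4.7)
The plan is to reduce all derivatives and integrals to $x$-derivatives of $\phi$ applied at $x-y$. For each $j$, using translation invariance $\partial_{y_\ell}\phi(x-y)=-\partial_{x_\ell}\phi(x-y)$, the operator $\lambda_{j,y}$ applied to $\phi(x-y)$ can be rewritten as $\pm D^{\alpha(j)}_x \phi(x-y)$ for some multi-index $\alpha(j)$ with $|\alpha(j)|=j$. Then for every multi-index $\beta$ with $|\beta|\le m-1$,
\begin{equation*}
D^\beta_x k_j(x,y) = \pm D^{\beta+\alpha(j)}_x\phi(x-y), \qquad |\beta+\alpha(j)|\le 2m-2.
\end{equation*}

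Next, using \eqref{fs_derivs} together with Lemma \ref{Hom_decay}, I would record the pointwise bound $|D^\gamma\phi(z)|\le C|z|^{2m-d-|\gamma|}\bigl(1+\bigl|\log|z|\bigr|\bigr)$ valid for any multi-index $\gamma$ (with the logarithmic factor removable when $|\gamma|>2m-d$). Combining this with Condition \ref{firstDecay}, I would verify that the integrand $g_j(y)\, D^{\beta+\alpha(j)}_x\phi(x-y)$ is absolutely integrable over $y\in\reals^{d-1}$, locally uniformly in $x\in\reals^d$, for every $|\beta|\le m-1$. Near the potential singularity at $y=x'$ (worst case $x_d=0$), the boundary volume element $r^{d-2}\,dr$ together with the condition $|\beta|+j\le 2m-2<2m-1$ supplies integrability. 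Near infinity, the product of $|g_j(y)|\le C(1+|y|)^{1+j-2m}(\log(e+|y|))^{-\delta}$ with the kernel bound is of order $|y|^{-1-|\beta|}(\log|y|)^{1-\delta}$, which is integrable against $d\sigma$ on $\{|y|>R\}$ since $\delta>2>1$.

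With these dominating majorants in hand, dominated convergence justifies differentiating under the integral sign through order $m-1$, and the resulting derivatives are continuous in $x\in\reals^d$. Summing over $j$ yields $T\bfg\in C^{m-1}(\reals^d)$. For the harmonicity claim, fix any compact $K\subset\reals^d_+$: then $\inf_{x\in K}x_d>0$, so $|x-y|$ is uniformly bounded below on $K\times \reals^{d-1}$ and $\phi(x-y)$ is smooth in $x$ on $K$ for every $y$. All $x$-derivatives through order $2m$ of the kernel have algebraic growth in $|y|$ which is absorbed by the decay of $g_j$, justifying differentiation under the integral through order $2m$. Since $\Delta^m_x\phi(x-y)=0$ for $x\ne y$, this yields $\Delta^m T\bfg(x)=0$ on $\reals^d_+$; the same reasoning applies on $\reals^d_-$.

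The main obstacle is the careful bookkeeping needed to produce a $y$-integrable majorant near boundary points (i.e., where $x_d=0$), since the singularity of $D^\gamma\phi(x-y)$ at $y=x'$ is simultaneously combined with a logarithmic factor when $|\gamma|\le 2m-d$. A clean way to handle this is to absorb the log into a small polynomial factor $|z|^{-\epsilon}$ for arbitrarily small $\epsilon>0$ and to verify the integrability condition with $|\gamma|$ replaced by $|\gamma|+\epsilon$; the strict inequality $|\gamma|< 2m-1$ leaves just enough room for this buffer, and the regularity argument then closes uniformly across the boundary.
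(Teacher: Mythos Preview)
Your argument is correct and proceeds exactly as the paper's does: dominated convergence using the pointwise kernel bound from \eqref{fs_derivs} together with the decay in Condition~\ref{firstDecay}, with polyharmonicity handled separately on the open half-spaces where the kernel is smooth. (The paper in fact records the sharper statement $V_j g_j \in C^{2m-j-2}(\reals^d)$, which your method yields verbatim once you allow $|\beta|\le 2m-j-2$ rather than just $|\beta|\le m-1$.) One arithmetic quibble: the pointwise product at infinity is of order $|y|^{1-d-|\beta|}(\log|y|)^{1-\delta}$, not $|y|^{-1-|\beta|}(\log|y|)^{1-\delta}$; after integrating against $\dif\sigma$ on $\reals^{d-1}$ the radial integrand becomes $r^{-1-|\beta|}(\log r)^{1-\delta}$, so your integrability conclusion (needing $\delta>2$ precisely when $|\beta|=0$) is correct.
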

%
\begin{proof}
The smoothness on $\phi$ away from the origin
and the decay condition on $g_j\times k_j(x,\cdot) = g_j\times \lambda_j \phi(x-\cdot)$
ensures that $V_j g_j$ is $C^{\infty}$  in each open half space.  
Likewise, the fact that 
$\Delta^m \phi(x) = 0$ for $x\ne 0$
and that 
$\Delta^m \lambda_{j,y} \phi(x-y) = \lambda_{j,y} \Delta^m \phi(x-y)$
ensures that each $V_j g$ is $m$-fold polyharmonic in both half spaces.

The bounds obtained from \eqref{fs_derivs} show that $V_j g_j$ extends to 
$C^{2m-j-2}\bigl(\reals^{d})$,
 since 
%
\begin{equation*}
D^{\beta} V_j g_j (x)
=
\int_{\reals^{d-1}} 
    g_j(y) 
    D^{\beta}_x \lambda_{j,y} \phi (x-y)
\dif \sigma(y)
= 
\int_{\reals^{d-1}} 
    g_j(y) 
    \lambda_{j,y} D^{\beta}_x \phi (x-y) 
\dif \sigma(y).
\end{equation*}
%
The first step involves dominated convergence, justified by the fact that
 $|g_j (y) \lambda_{j,y} D^{\beta}_x \phi (x-y)|$ 
 is integrable:
 the product decays rapidly, thanks to Condition \ref{firstDecay} while 
 $\lambda_{j,y} D^{\beta}_x \phi (x-y)$ 
 is locally integrable, since 
 $j+|\beta|\le 2m-2$. 
 \end{proof}
 In summary, it follows that $T {\bfg}$ satisfies $\Delta^mT {\bfg}=0$, and is  sufficiently smooth near the boundary
 to have well-defined boundary values $\lambda_j T{\bfg}$, $j=0,\dots,m-1$.
%
%
%
%
%
\subsection{Fourier characterization of the boundary layer potentials}
\label{Fourier characterization of the boundary layer potentials}
The alternative representation of $T{\bfg}$ is to think of it
as a convolution of $\phi$ with a distribution supported on $\reals^{d-1}$, i.e., $T\bfg = \phi * \mu_{\bfg}$.
We adopt this view in order to give a Fourier description of the boundary layer potential operators. 

This idea is most easily illustrated for the initial ($j=0$) term $V_0$. In this case, for a given $g$ we have
$
V_0g(x) 
=  
\int_{\reals^{d-1}} g(y)\lambda_{0,y} \phi(x-y)\, \dif\sigma (y)
 =
 \left\langle 
  \phi(x- \cdot), g\otimes \delta
\right\rangle$,
 where $g\otimes \delta$ is the distribution defined by
 $\gamma \mapsto 
  \langle \gamma, g\otimes \delta \rangle = \int_{\reals^{d-1}}g(x') \gamma(x',0) \dif x'$.
For the higher order boundary layer potentials, we 
 apply $ \Lambda_j$, the natural extension of the boundary operator $\lambda_j$. 
Correspondingly, $ \Lambda_j^* =  \Delta^{\frac{j}{2}}$ when $j$ is even and  
$ \Lambda_j^*=\frac{\partial}{\partial x_d} \Delta^{\frac{j-1}{2}}$ when $j$ is odd.
In this case, the individual boundary layer potential operators are 
\begin{equation*}
V_j g (x)
= 
\int_{\reals^{d-1}} g(y)\lambda_{j,y} \phi(x-y)\, \dif\sigma (y)
=
\left\langle 
\Lambda_j  \bigl(  \phi(x- \cdot)\bigr), g\otimes \delta
\right\rangle
= 
\left\langle 
  \phi(x- \cdot), \Lambda_j^* \bigl( g\otimes \delta \bigr)
\right\rangle.
\end{equation*}
In other words, $V_j g = \phi*\bigl( \Lambda_j^* \bigl( g\otimes \delta \bigr) \bigr)$. 
\subsubsection*{Formal calculation} 
This leads to a standard calculation of the symbol of the operator $V_j$, which we now present formally. 
(In the interest of presenting a self-contained exposition, we present a slightly more direct approach below.)
 This can be simplified with  the usual formula expressing a convolution as the inverse Fourier transform of a product:
 $$
 V_j g 
 =
 \phi*\bigl( \Lambda_j^* \bigl( g\otimes \delta \bigr) \bigr)
 = 
 \left( 
   \left[\bigl( \Lambda_j^* \bigl( g\otimes \delta \bigr)\bigr)
 \right]^{\wedge}
 \widehat{\phi}
 \right)^{\vee}
 .$$
 The first factor can be expressed as 
 $ 
\sigma(\Lambda_j^*) (\xi) (g\otimes\delta)^{\wedge}(\xi)$.
The symbol of the differential operator is simply $\sigma(\Lambda_j^* )(\xi) = i^j |\xi|^j$ when $j$ is even and
$\sigma(\Lambda_j^*)(\xi) = i^j  |\xi|^{j-1}\xi_d$ when $j$ is  odd,
while  the Fourier transform of the distribution $g\otimes \delta $
is  $(g\otimes\delta)^{\wedge}(\xi) =\widehat{g}(\xi_1,\dots,\xi_{d-1})$. The Fourier transform of $\phi$ is a 
tempered distribution, leading to a distributional description of $V_j$. 
However, when considering test functions having a  Fourier transform supported away from the origin, 
$\widehat{\phi}(\xi) = (-1)^m|\xi|^{-2m}$.
This line of reasoning results in a distributional version of the formula \eqref{BLPO_Fourier} shown below. 
In order to obtain the pointwise formula, and to demonstrate that it is valid for a larger class of functions, we perform the following direct calculation. 

\subsubsection*{Direct calculation} 
 The Fourier transform of the tempered distribution $\phi$, restricted to test functions $\gamma$ with 
 $\supp{\widehat{\gamma}}\subset \reals^d\setminus\{0\}$, 
 can be represented by $(-1)^m|\xi|^{-2m}$. 
This is nearly sufficient for our purposes---we need to give a slightly more careful description of the behavior of the \emph{pseudo-function} 
 $(-1)^m|\xi|^{-2m}$ 
near the origin.
 The singular behavior at the origin can be treated by modifying the kernel  $e^{i\langle x,\xi\rangle}$ to force a high order zero (see \cite[Chapter 4.4]{GV2} or \cite{MaNe}). 
 By subtracting the Taylor polynomial of $e^{i\langle x,\xi\rangle}$ of degree $2m-d$ multiplied by a smooth, 
 compactly supported cut-off function $\psi$ equalling $1$ in a neighborhood of the origin,
  we obtain the kernel  
 $E(x,\xi) = e^{i\langle x,\xi\rangle} - \psi(\xi)\sum_{\ell=0}^{2m-d} \frac{(i\langle x,\xi\rangle)^{\ell}}{\ell!}$.
 \begin{lemma} 
 Given $\psi\in C_c^{\infty}(\reals^d)$ where $\psi(\xi) =1$ for $|\xi|\le 1$, 
 there is a polynomial $p\in\Pi_{2m-d}(\reals^d)$  (depending on $\phi$ and $\psi$) so that
 \begin{equation}\label{FourierTransformIdentity}
 D^{\alpha} \phi(x) = 
  (-1)^m\frac{i^{|\alpha|}}{(2\pi)^{d}}
  \int_{\reals^d}
    \frac{ \xi^{\alpha}}{|\xi|^{2m}}
    	\left(
	  e^{i\langle x,\xi\rangle} 
	  - 
	  \psi(\xi)\sum_{\ell=0}^{2m-d-|\alpha|} \frac{(i\langle x,\xi\rangle)^{\ell}}{\ell!}
	\right) 
\dif \xi 
  +D^{\alpha}p(x).
 \end{equation}
 \end{lemma}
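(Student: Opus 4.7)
The plan is to prove the formula first in the case $\alpha = 0$, exhibiting $\phi$ as the sum of a polynomial and the $\alpha=0$ integral on the right of \eqref{FourierTransformIdentity}, and then to recover the general case by differentiating. Accordingly, I would define
$$\Phi(x) := \frac{(-1)^m}{(2\pi)^d} \int_{\reals^d} \frac{1}{|\xi|^{2m}} \left(e^{i\langle x,\xi\rangle} - \psi(\xi) \sum_{\ell=0}^{2m-d} \frac{(i\langle x,\xi\rangle)^\ell}{\ell!}\right)\dif\xi$$
and verify absolute convergence for each fixed $x$. Near $\xi = 0$, Taylor's theorem applied to $e^{i\langle x,\xi\rangle}$ shows the bracketed quantity vanishes to order $2m-d+1$ in $\xi$, which dominates the $|\xi|^{-2m}$ singularity and leaves an integrable $|\xi|^{1-d}$ bound (with constant depending polynomially on $|x|$); at infinity, $\psi$ vanishes, leaving an integrand dominated in modulus by $|\xi|^{-2m}$, which is integrable since $m > d/2$.

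Next I would show $\Phi$ is also a fundamental solution of $\Delta^m$. Differentiating under the integral sign, the key combinatorial identity
$$D_x^\alpha \frac{(i\langle x,\xi\rangle)^\ell}{\ell!} = i^{|\alpha|}\,\xi^\alpha\, \frac{(i\langle x,\xi\rangle)^{\ell - |\alpha|}}{(\ell - |\alpha|)!} \quad (\ell \geq |\alpha|),$$
and zero otherwise, followed by reindexing $k = \ell - |\alpha|$, yields exactly the integral on the right of \eqref{FourierTransformIdentity} with Taylor cutoff $k \le 2m-d-|\alpha|$. Differentiation under the integral is justified by the same Taylor-remainder bound as before applied to $D^\beta_x e^{i\langle x,\xi\rangle}$ for $|\beta| \leq |\alpha|$. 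For $|\alpha| = 2m$ the Taylor cutoff becomes vacuous, so $\Delta^m \Phi(x) = (2\pi)^{-d}\int e^{i\langle x,\xi\rangle}\,\dif\xi = \delta(x)$ in the distributional sense. Since $\Delta^m \phi = \delta$ as well, the difference $p := \phi - \Phi$ is a tempered distribution annihilated by $\Delta^m$; Fourier transforming gives $|\xi|^{2m}\hat p = 0$, so $\supp(\hat p) \subseteq \{0\}$ and $p$ is a polynomial.

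The remaining and most delicate step is bounding $\deg p \leq 2m-d$. I would handle this by comparing growth rates at infinity. The explicit formula for $\phi$ yields $|\phi(x)| = O(|x|^{2m-d}\log|x|)$, and the dilation $\xi \mapsto \xi/|x|$ in the defining integral for $\Phi$ produces a homogeneous leading behaviour of the same order (with the $\psi$-cutoff contributing only lower-order log corrections when $2m-d$ is an integer). Thus $p = \phi - \Phi$ is a polynomial of growth strictly below $|x|^{2m-d+1}$ at infinity, forcing $\deg p \leq 2m-d$. Equivalently, one may argue in the frequency domain: the ambiguity in choosing a tempered regularization of $|\xi|^{-2m}$ is exactly a linear combination $\sum_{|\gamma| \leq 2m-d} c_\gamma D^\gamma \delta$, since those are the derivatives of $\delta$ annihilated by multiplication by $|\xi|^{2m}$ modulo the contribution we have explicitly subtracted; inverting the Fourier transform gives $p\in\Pi_{2m-d}$.

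Granted the $\alpha=0$ identity $\phi = \Phi + p$ with $p \in \Pi_{2m-d}$, applying $D^\alpha$ to both sides and invoking the differentiation formula for $\Phi$ established above produces \eqref{FourierTransformIdentity} for every multi-index $\alpha$.
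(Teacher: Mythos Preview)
Your overall strategy---show $\Phi$ is also a fundamental solution, deduce that $p:=\phi-\Phi$ is a polynomial, then bound its degree---is sound but differs from the paper's. The paper never computes $\Delta^m\Phi$ directly. Instead it pairs both $\phi$ and $\Phi$ against test functions $\gamma\in\mathcal{D}_{2m-d+1}$ (compactly supported smooth $\gamma$ with $\widehat{\gamma}(\xi)=O(|\xi|^{2m-d+1})$ at the origin). For such $\gamma$ the subtracted Taylor polynomial in $\Phi$ integrates to zero, so $\Phi*\gamma(x)=(-1)^m(2\pi)^{-d}\int|\xi|^{-2m}\widehat{\gamma}(\xi)e^{i\langle x,\xi\rangle}\dif\xi$, which vanishes at infinity by Riemann--Lebesgue; Corollary~\ref{Convolution_Decay} gives the same decay for $\phi*\gamma$; and a decaying tempered polyharmonic function is identically zero. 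Thus $\langle\phi,\gamma\rangle=\langle\Phi,\gamma\rangle$ for all $\gamma\in\mathcal{D}_{2m-d+1}$, and \cite[Proposition~2.6]{MaNe} then forces $\phi-\Phi\in\Pi_{2m-d}$ in one stroke---no separate growth estimate is needed.

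Your route has a gap at the step $\Delta^m\Phi=\delta$. The Taylor-remainder bound you invoke controls the integrand only near $\xi=0$; on $\{|\xi|\ge1\}$, where $\psi\equiv0$, the differentiated integrand has modulus $|\xi|^{|\alpha|-2m}$, which is not in $L_1(\reals^d\setminus B(0,1))$ once $|\alpha|\ge 2m-d$. So dominated convergence does not justify passing $2m$ derivatives inside, and the line ``$\Delta^m\Phi(x)=(2\pi)^{-d}\int e^{i\langle x,\xi\rangle}\dif\xi=\delta(x)$'' is purely formal. This can be repaired by a genuine distributional computation (pair $\Phi$ with $\Delta^m\varphi$ for $\varphi\in C_c^\infty$ and justify Fubini), but that is not the ``same bound as before.'' Likewise, your growth argument for $\deg p\le 2m-d$ is correct in outline, but the dilation sketch does not immediately deliver a uniform $O(|x|^{2m-d}\log|x|)$ bound on $\Phi$---the cutoff $\psi(\xi)$ breaks exact homogeneity and one must track the resulting error. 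The paper's duality route avoids both of these issues.
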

 \begin{proof}
 It is not difficult to see that $\phi_2 := (-1)^m (2\pi)^{-d}\int_{\reals^d}|\xi|^{-2m} E(\cdot,\xi)\dif \xi$ is a
 continuous function, which exhibits polynomial growth. 
 It serves as a fundamental solution for test functions in
 $\mathcal{D}_{2m-d+1}$, the space of compactly supported $C^{\infty}$ functions $\gamma$ for which 
 $\widehat{\gamma}(\xi) = \mathcal{O}(|\xi|^{2m-d+1})$ near the origin. 
 Indeed, for such  $\gamma$, a direct calculation reveals
 $
 \phi_2*\gamma (x) 
 =  \frac{(-1)^m}{(2\pi)^{d}}
 \int_{\reals^d} \widehat{\gamma}(\xi) |\xi|^{-2m}e^{i\langle x,\xi\rangle} \dif \xi
$. 
By this observation, the Riemann-Lebesgue lemma applies and $\phi_2*\gamma$ vanishes at $\infty$. 
By Corollary \ref{Convolution_Decay}, the solution $\phi*\gamma$ also decays. 
Because $w :=(\phi_2 - \phi)*\gamma$ is a tempered distribution solving $\Delta^m w = 0$, 
its Fourier transform is supported at the origin, and $u$ must therefore be a polynomial.
Because $u$ vanishes at $\infty$, it must be trivial, and 
$\langle \phi_2,\gamma\rangle = \langle\phi,\gamma\rangle$  
for all $\gamma\in \mathcal{D}_{2m-d+1}$. 
By \cite[Proposition 2.6]{MaNe}, $\phi = \phi_2 + p$ for  some $p\in \Pi_{2m-d}$.
The lemma follows by dominated convergence, differentiating under the integral.
\end{proof}

To simplify the convolution of $\Lambda_j^*(g\otimes \delta)$  with $\phi$, we force $\widehat{g}$ 
(and therefore $(g\otimes \delta)^{\wedge}$) 
to have a high order zero at the origin.
%
%
\begin{condition}[Vanishing moments]
\label{fourierMoments}
For $j=0,\dots,\min(m-1,2m-d)$, 
we assume  there is $C>0$ and $\tau>1$ so that  
$
|\widehat{g}_j(\xi')|
\le 
C |\xi'|^{2m-j -d} 
\bigl| \log|\xi'|\bigr|^{-\tau} 
 $ 
 holds in a neighborhood of the origin.
\end{condition}
%
It is clear that Condition \ref{fourierMoments} and Condition \ref{firstDecay} together ensure that 
$g_j\perp \Pi_{2m-d-j}(\reals^{d-1})$.
From this, it follows that 
for  $p\in\Pi_{2m-d}$
the integral
$\int_{\reals^{d-1}} g_j(y) \Lambda_jp(x-y)\dif \sigma(y)$ 
vanishes, 
as does
$
\int_{\reals^{d-1}} g_j(y)
\sum_{\ell=0}^{2m-d-j} 
\frac{(i\langle x-y,\xi\rangle)^{\ell}}{\ell!} \dif \sigma(y)
$.

Note that $V_j g_j(x) =(-1)^j \int_{\reals^{d-1}} g_j(y') (\Lambda_j \phi) \bigl(x- (y',0)\bigr) \dif y'$, where 
$\Lambda_j  = \sum_{|\beta| = j} b_{\beta} D^{\beta}$ is a homogeneous operator of order $j$. 
A direct calculation with the identity \eqref{FourierTransformIdentity}  and Fubini's theorem 
gives the Fourier description of the boundary layer potential alluded to earlier (we write $\xi = (\xi',\xi_d)$ at this point):
%
\begin{eqnarray}
V_j g_j (x',x_d)  
 &=&
 (-1)^{m+j}
\sum_{|\beta|=j} 
\frac{i^{j}}{(2\pi)^{d}}
\int_{\reals^d}
 \frac{b_{\beta} \xi^{\beta}}{|\xi|^{2m}}
 e^{i\langle x,\xi\rangle}
  \left(
  \int_{\reals^{d-1}}
    e^{-i\langle y',\xi'\rangle}
    g_j(y') 
\,  \dif y'
 \right)   
\dif \xi\nonumber\\
&=&
(-1)^{m+j}
\sum_{|\beta|=j} 
\frac{i^{j}}{(2\pi)^{d}}
\int_{\reals^d}
 \frac{b_{\beta} \xi^{\beta} }{|\xi|^{2m}}\widehat{g_j}(\xi')
 e^{i\langle x,\xi\rangle}
 \,
 \dif \xi.\label{initial_symbol}
\end{eqnarray}
%

Observe that
$|\xi|^{2m} =(\xi_d^2 +|\xi'|^2)^m = |\xi'|^{2m}(1 + (\xi_d/|\xi'|)^2)^m$. 
Integrating first with respect to $\zeta:=\xi_d/|\xi'|$, and then with respect to  
$\xi'$, we can simplify \eqref{initial_symbol}.
For even values of $j$, this becomes
%
\begin{equation*}
V_j g_j (x) 
=\frac{(-1)^{j/2+m}  }{(2\pi)^{d}}
  \int_{\reals^{d-1}} 
  \left(
  \int_{\reals}
   \frac{ e^{i |\xi'| x_d \zeta}}{|\zeta^2 +1|^{\frac{2m-j}{2}} }
   \dif \zeta
   \right)
\frac{\widehat{g_j}(\xi') e^{i\langle x',\xi'\rangle}}{|\xi'|^{2m-j-1}}   \dif \xi'.
  \end{equation*}
For odd $j$, we have
\begin{equation*}
V_j g_j (x) = \frac{(-1)^{(j-1)/2 +m}}{ (2\pi)^{d}}
 \int_{\reals^{d-1}} 
  \left(
  \int_{\reals}
   \frac{ i \zeta e^{i  |\xi'| x_d \zeta }}{|\zeta^2 +1|^{\frac{2m-(j-1)}{2}} }
   \dif \zeta
   \right)
\frac{\widehat{g_j}(\xi')e^{i\langle x',\xi' \rangle} }{|\xi'|^{2m-j-1}}   \dif \xi'.
\end{equation*}
%
For general $j$, we write the boundary layer potential (regardless of parity) as 
%
%
\begin{gather}
\label{BLPO_Fourier}
\qquad V_j g_j (x)
=
  (2\pi)^{-d}
\int_{\reals^{d-1}} 
	\left(   
		\int_{\reals}
     			\tau_j(\zeta) e^{ i |\xi'| x_d \zeta  }
   		\dif \zeta  
	\right)
   	\frac{\widehat{g_j}(\xi') e^{i\langle x', \xi' \rangle}}{|\xi'|^{2m-j-1}}
\dif \xi',
\\
\intertext{with}
\tau_j(\zeta)  
:= 
\begin{cases}
         {(-1)^{\frac{j}{2}+m} }{\left|1+\zeta^2\right| ^{\frac{j}{2}-m}}, &\text{for $j$ even,}\\
	{(-1)^{\frac{j-1}{2}+m} i\zeta}{ \left|1+\zeta^2\right|^{\frac{j-1}2-m}}, &\text{for $j$ odd.}
\end{cases}
\nonumber
\end{gather}
%

Our goal is to represent the boundary values of the boundary layer potentials. 
To this end, we assume that $\widehat{g_j}(\xi')$ has the following behavior as $|\xi'|\to \infty$.
%
%
\begin{condition}[Smoothness]\label{fourierSmoothness}
For $j=0,\dots,m-1$, 
$\int_{\reals^{d-1}}|\widehat{g_j}(\xi')| (1+|\xi'|)^{j-m}\dif \xi' <\infty$ .
\end{condition}
%
Condition \ref{fourierSmoothness} allows partial derivatives to pass inside the integral in 
\eqref{BLPO_Fourier} of $V_j$. In this case we use the fact that $\sigma(\Lambda_k)(\xi) = (-1)^{k/2} |\xi|^k$
when $k$ is even and $\sigma(\Lambda_k)(\xi) = -i(-1)^{\frac{k-1}{2}} |\xi|^{\frac{k-1}{2}}\xi_d$ when $k$ is odd.
We  arrive at the following lemma.
%
%
\begin{lemma}\label{Poisson_symbol}
Let $\bfg$  satisfy Conditions \ref{firstDecay}, \ref{fourierMoments} and \ref{fourierSmoothness}.
For $k,j = 0\dots,m-1$, 
the $k$th order differential operator ${\Lambda}_k$ applied to the $j$th boundary layer potential satisfies
$$
{\Lambda}_k V_j g_j (x) = 
\int_{\reals^{d-1}} 
     \left(   \int_{\reals}
      \omega_{k,j}(\zeta) e^{ i |\xi'| x_d \zeta  }
   \dif \zeta  \right)
   \frac{\widehat{g_j}(\xi') e^{i\langle x', \xi' \rangle}}{|\xi'|^{2m-j-k-1}}
\dif \xi',
$$
where
$$
\omega_{k,j}(\zeta)  := \frac{(-1)^m}{(2\pi)^{d}}
\begin{cases}
(-1)^{\frac{j+k}{2}} \left|1+\zeta^2\right|^{\frac{j+k}{2}-m}, &\text{for $j,k$ both even,}\\
-(-1)^{\frac{j+k}{2}}\zeta^2 \left|1+\zeta^2\right|^{\frac{j+k-2}{2}-m}, &\text{for $j,k$ both odd,}\\
(-1)^{\frac{j+k-1}{2}}(i\zeta) \left|1+\zeta^2\right|^{\frac{j+k-1}{2}-m}, &\text{for $j+k$ odd.}
\end{cases}
$$ 
\end{lemma}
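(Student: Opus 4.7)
The plan is to apply $\Lambda_k$ directly inside the double integral in \eqref{BLPO_Fourier}, where the only $x$-dependence is in the exponential factor. Combining $e^{i|\xi'|x_d\zeta}\cdot e^{i\langle x',\xi'\rangle} = e^{i\langle x,\eta\rangle}$ with the auxiliary variable $\eta := (\xi',|\xi'|\zeta)\in\reals^d$, so that $|\eta|^2 = |\xi'|^2(1+\zeta^2)$ and $\eta_d = |\xi'|\zeta$, the action of $\Lambda_k$ reduces to multiplication of the integrand by the Fourier symbol $\sigma(\Lambda_k)(\eta)$. The formulas for this symbol stated just before the lemma give $\sigma(\Lambda_k)(\eta) = (-1)^{k/2}|\xi'|^k(1+\zeta^2)^{k/2}$ when $k$ is even and $\sigma(\Lambda_k)(\eta) = -i(-1)^{(k-1)/2}|\xi'|^k\zeta(1+\zeta^2)^{(k-1)/2}$ when $k$ is odd.

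Next I would justify differentiating through both integrals. The inner $\zeta$-integral remains absolutely convergent after multiplication by $\sigma(\Lambda_k)(\eta)$ because the polynomial factor of order $(1+\zeta^2)^{k/2}$ is dominated by the decay of $\tau_j(\zeta)$ at rate $(1+\zeta^2)^{(j-2m)/2}$; the combined exponent $(j+k)/2 - m$ is at most $-1$ since $j,k\le m-1$. The separate factor $|\xi'|^k$ then converts the denominator from $|\xi'|^{2m-j-1}$ to $|\xi'|^{2m-j-k-1}$ as asserted, and the resulting outer integrand in $\xi'$ is majorized by a constant multiple of $|\widehat{g_j}(\xi')|(1+|\xi'|)^{j-m}(1+|\xi'|)^{k-1}$, which is integrable by Condition \ref{fourierSmoothness} since $k-1 \le m-2$.

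The remaining step is to compute $(2\pi)^{-d}\tau_j(\zeta)\cdot\sigma(\Lambda_k)(\eta)/|\xi'|^k$ and match it with $\omega_{k,j}(\zeta)$ case by case. The four parity combinations for $(j,k)$ collapse into the three displayed cases according to the parity of $j+k$. When $j,k$ are both even, the signs combine as $(-1)^{j/2+m}\cdot(-1)^{k/2}=(-1)^{(j+k)/2+m}$ and the exponents of $(1+\zeta^2)$ add to $(j+k)/2 - m$; when both are odd, the product $(i\zeta)(-i\zeta)=\zeta^2$ together with the sign $-(-1)^{(j+k)/2+m}$ and exponent $(j+k-2)/2-m$ reproduces the second case. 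The two mixed-parity subcases each leave an $i\zeta$ factor with exponent $(j+k-1)/2-m$ on $(1+\zeta^2)$.

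I expect the principal obstacle to be the sign bookkeeping across the parity subcases. In particular, for $j+k$ odd one must verify that both of the mixed-parity situations ($j$ even, $k$ odd and $j$ odd, $k$ even) produce the same overall sign $(-1)^{(j+k-1)/2+m}$; this requires careful tracking of the factor $-i$ coming from $\Lambda_k$ (when $k$ is odd) against the factor $i$ already present in $\tau_j$ (when $j$ is odd), alongside the half-integer powers of $-1$. Once this is done, collecting the pieces and reading off the three cases yields the stated $\omega_{k,j}$.
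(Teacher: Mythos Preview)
Your approach is exactly the paper's: differentiate under the integral in \eqref{BLPO_Fourier} using the symbol of $\Lambda_k$, with Condition~\ref{fourierSmoothness} supplying the dominating function, and then collect the $\zeta$- and $|\xi'|$-factors case by case.

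One slip in your justification: the majorant you write for the outer integrand, $|\widehat{g_j}(\xi')|(1+|\xi'|)^{j-m}(1+|\xi'|)^{k-1}$, is \emph{not} integrable from Condition~\ref{fourierSmoothness} alone (for $k\ge 2$ the extra factor $(1+|\xi'|)^{k-1}$ grows, and ``$k-1\le m-2$'' does not help). The correct bound is tighter: after absorbing $|\xi'|^k$ into the denominator you get $|\widehat{g_j}(\xi')|\,|\xi'|^{-(2m-j-k-1)}$, i.e.\ an extra factor $(1+|\xi'|)^{k+1-m}$ rather than $(1+|\xi'|)^{k-1}$. Since $k\le m-1$ this factor is bounded for $|\xi'|\ge 1$, and Condition~\ref{fourierSmoothness} applies directly. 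With this correction your argument goes through, and your caution about the mixed-parity sign bookkeeping is well placed.
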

We note that, since $k+j\le 2m-2$, each function $\omega_{j,k}$ is in $L_1(\reals)$.

%
%
\section{Boundary values of the boundary layer potentials}
\label{Boundary values of the boundary layer potentials}
In this section, we begin our analysis of the solution $T{\bfg}$ by considering its boundary values.

%
%
\subsection{The matrix-valued kernel}\label{The matrix-valued kernel}
Since the boundary layer potentials $V_jg_j$ admit smooth extensions to functions in 
$C^{2m-j-2}(\reals^{d-1}\times[0,\infty))$ 
we may simply look for solutions of the system of integral equations
\begin{equation}\label{System}
\V 
\begin{pmatrix}
  g_0\\ 
  g_1\\
  \vdots\\
  g_{m-1}
\end{pmatrix}
:=
\begin{pmatrix}
  \lambda_0[ V_0 g_0 + V_1 g_1 + \dots + V_{m-1} g_{m-1}]\\
   \lambda_1 [V_0 g_0 +  V_1 g_1 + \dots +V_{m-1} g_{m-1}]\\
   \vdots\\
    \lambda_{m-1} [V_0 g_0 + V_1 g_1 + \dots + V_{m-1} g_{m-1}]\\
\end{pmatrix}
=
\begin{pmatrix}
h_0\\
h_1\\
\vdots\\
h_{m-1}
\end{pmatrix}.
\end{equation}
We remark that 
the 
operator $\V$ can be viewed as an integral operator with
matrix valued kernel, 
\begin{equation}\label{System_kernel}
\V 
\mathbf{g} (x)= \int_{\reals^{d-1}} \mathbf{K}(x,y) \mathbf{g} (y) \dif \sigma(y).
\end{equation}
The entries of the kernel are 
$\bigl(\mathbf{K}(x,y)\bigr)_{k,j} = (\lambda_k)_x  (\lambda_{j})_{y} \phi(x-y),$
and each entry gives rise (as a scalar kernel) to an operator $v_{k,j} = \lambda_k V_{j}$.
Furthermore, each such entry has behavior around the diagonal of order 
$\mathcal{O}\left( |x-y|^{(2m-d) - (j+k)} \bigl|\log|x-y|\bigr| \right)$, and since  $k+j\le 2m-2$, 
each is locally integrable on $\reals^{d-1}.$ 
In other words, 
$$v_{k,j} g(x) = 
\lambda_{k} V_j g(x) 
= 
 \lambda_{k}\int_{\reals^{d-1}}g(y)  \lambda_{j,y} \phi(x-y) \dif \sigma(y)
=  
\int_{\reals^{d-1}} g(y) \lambda_{k,x}\lambda_{j,y} \phi(x-y) \dif \sigma(y).$$

%
%
\subsection{Fourier characterization of the matrix-valued kernel} 
\label{Fourier characterization of the matrix-valued kernel} 
Another way of viewing the operator 
$\V$ is through the Fourier transform. 
In this case, we use Lemma \ref{Poisson_symbol} and dominated convergence to obtain
$v_{k,j} g_j= 
 \lim_{x_d \to 0} 
   \Lambda_k V_j g_j (x)
$. This is
\begin{equation}\label{scalar_operators}
v_{k,j} g_{j}(x)=
\int_{\reals^{d-1}} 
   \left(   
     \int_{\reals}
       \omega_{k,j}(\zeta) 
     \dif \zeta  
   \right)
   \frac{\widehat{g_j}(\xi') e^{i\langle y, \xi' \rangle}}{|\xi'|^{2m-j-k-1}}
\dif \xi'.
\end{equation}
%
Letting 
$     
\int_{\reals}
       \omega_{k,j}(\zeta) 
\dif \zeta 
= (2\pi)^{-(d-1)}(-1)^{\frac{j+k-2m}{2}}\M_{k,j}
$
we observe the symbol of the multiplier $v_{k,j} $ is 
\begin{equation}\label{scalar_symbol_first}
\sigma(v_{k,j})
(\xi') 
= 
(-1)^{\frac{j+k-2m}{2}} \M_{k,j} |\xi'|^{(j+k+1)-2m} .
\end{equation}
%
%
%
It follows that
\begin{equation}
\M_{k,j}
=
(2\pi)^{-1}\begin{cases} 
 \int_{-\infty}^{\infty} (1+\zeta^2)^{ (j+k)/2-m} \dif \zeta, & \text{for $j,k$ both even,}\\
- \int_{-\infty}^{\infty} \zeta^2\,(1+\zeta^2)^{ (j+k)/2-(m+1)} \dif \zeta, & \text{for $j,k$ both odd,}\\
0, & \text{for $j+k$ odd.}
\end{cases}
\end{equation}
We can compute the coefficients of these explicitly as products of dyadic integers with either (middle) binomial coefficients 
$
\b_j
:= \binom{2j}{j}$ or numbers 
$4\b_{j-1} - \b_j$, which are multiples of the ubiquitous\footnote{Richard Stanley's webpage {\tt http://www-math.mit.edu/~rstan/ec/catadd.pdf} now lists more than 180 different combinatorial structures counted by the Catalan numbers.} 
Catalan numbers 
$\c_{j}:=\binom{2j}{j} - \binom{2j}{j\,{+}\,1}$
(see Section \ref{sec: path counting and invertibility}). Indeed, using the binomial identities 
$\binom{2p}{p-1} =  \binom{2p}{p+1}$ and 
$\binom{p}{q} = \binom{p-1}{q-1} + \binom{p-1}{q}$,
one readily shows that $4\b_{j-1}-\b_j = 2\c_{j-1}$. 

%
%
\begin{proposition}\label{scalar_symbol}
The operators $v_{k,j}: g\mapsto \int_{\reals^{d-1}} g(y)\lambda_{k,x}\lambda_{j,y} \phi(x-y)\dif \sigma(y)$ are 
multiplier operators, having symbol 
$
\sigma(v_{k,j})
(\xi') = (-1)^{(j+k -2m)/2} \M_{k,j}\,|\xi'|^{j+k+1 -2m}$, where
\begin{gather}\label{matrix_symbol}
\M_{k,j} 
= 
\begin{cases}
2^{1+j+k-2m} \, \b_{m-(j+k)/2-1}, 
	& \text{for $j,k$ both even,} \\
-2^{j+k-2m} \,\c_{m-(j+k)/2-1},
	& \text{for $j,k$ both odd,}\\
0, 
	& \text{for $j+k$ odd.}
\end{cases}
\end{gather}
\end{proposition}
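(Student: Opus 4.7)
The plan is to simply evaluate the integrals appearing in the formula for $\M_{k,j}$ that has already been established just before the statement of the proposition. Indeed, equations~\eqref{scalar_operators} and~\eqref{scalar_symbol_first}, together with Lemma~\ref{Poisson_symbol}, already prove that $v_{k,j}$ is a multiplier operator with the claimed radial symbol $(-1)^{(j+k-2m)/2} \M_{k,j} |\xi'|^{j+k+1-2m}$; what remains is to put $\M_{k,j}$ into the closed form stated in~\eqref{matrix_symbol}. The off-diagonal ($j+k$ odd) case is immediate from Lemma~\ref{Poisson_symbol}, since $\omega_{k,j}$ is then an odd function of $\zeta$ and its integral over $\reals$ vanishes. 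So it suffices to treat the two parity-matched cases.

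The main analytic tool will be the Beta-function identity
\begin{equation*}
\int_{-\infty}^{\infty} (1+\zeta^{2})^{-s}\,\dif\zeta
\;=\; \sqrt{\pi}\,\frac{\Gamma(s-\tfrac12)}{\Gamma(s)} \qquad (s>\tfrac12),
\end{equation*}
combined with the half-integer gamma evaluation $\Gamma(n+\tfrac12) = \frac{(2n)!}{4^{n}\,n!}\sqrt{\pi}$. For $j,k$ both even, I set $n = m-(j+k)/2-1$ and take $s=n+1$ (note $s>1/2$ since $j+k \leq 2m-2$). A direct substitution then gives
\begin{equation*}
\frac{1}{2\pi}\int_{-\infty}^{\infty} (1+\zeta^{2})^{(j+k)/2-m}\,\dif\zeta
\;=\; \frac{1}{2}\cdot\frac{1}{4^{n}}\binom{2n}{n}
\;=\; 2^{\,1+j+k-2m}\,\b_{\,m-(j+k)/2-1},
\end{equation*}
which is exactly the stated expression for even $j,k$.

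For $j,k$ both odd, the extra $\zeta^{2}$ in the integrand is handled by the identity $\zeta^{2} = (1+\zeta^{2})-1$, splitting the integral into two Beta integrals of exponents $s=n+1$ and $s=n+2$. Using $\Gamma(n+\tfrac32)=(n+\tfrac12)\Gamma(n+\tfrac12)$ and $\Gamma(n+2)=(n+1)\Gamma(n+1)$ to collect terms produces a single factor $\frac{1}{2(n+1)}\cdot\sqrt{\pi}\,\Gamma(n+\tfrac12)/\Gamma(n+1)$, and the overall minus sign from the proposition's integrand turns this into
\begin{equation*}
\M_{k,j} \;=\; -\,\frac{1}{4^{\,n+1}}\cdot\frac{1}{n+1}\binom{2n}{n}
\;=\; -\,2^{\,j+k-2m}\,\c_{\,m-(j+k)/2-1},
\end{equation*}
using $\c_{n} = \frac{1}{n+1}\binom{2n}{n}$, as desired.

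The computation is essentially routine, so there is no real obstacle; the only thing to be careful about is bookkeeping of the shifts in exponents (translating between $s$, $n$, and $m-(j+k)/2-1$) and the signs coming from $\tau_{j}$ and from the Fourier-multiplier conventions. The equivalence $4\b_{j-1}-\b_{j}=2\c_{j-1}$ mentioned in the text just before the proposition is a consistency check between the two equivalent forms $\binom{2n}{n}-\binom{2n}{n+1}$ and $\frac{1}{n+1}\binom{2n}{n}$ of the Catalan number that appears, and its verification is the binomial identity $\binom{2p}{p-1}=\binom{2p}{p+1}$ together with Pascal's rule.
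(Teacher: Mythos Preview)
Your proof is correct and follows the same overall structure as the paper's: reduce to the displayed integral formula for $\M_{k,j}$, dispose of the odd-parity case by oddness of the integrand, and for the two parity-matched cases evaluate $\int_{\reals}(1+\zeta^2)^{-M}\,\dif\zeta$ explicitly, handling the odd--odd case via the splitting $\zeta^2=(1+\zeta^2)-1$.

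The difference is only in how the core integral is evaluated. The paper uses the residue theorem, factoring $(1+\zeta^2)^M=(\zeta-i)^M(\zeta+i)^M$ and reading off the residue at $\zeta=i$ to obtain $\pi\,2^{2-2M}\binom{2M-2}{M-1}$ directly; the odd--odd case is then finished by invoking the combinatorial identity $4\b_{M-2}-\b_{M-1}=2\c_{M-2}$ established just before the proposition. You instead use the Beta integral $\int_{\reals}(1+\zeta^2)^{-s}\,\dif\zeta=\sqrt{\pi}\,\Gamma(s-\tfrac12)/\Gamma(s)$ together with the half-integer Gamma evaluation, and in the odd--odd case the Gamma recurrences deliver the Catalan number in its $\frac{1}{n+1}\binom{2n}{n}$ form without passing through the $4\b_{j-1}-\b_j$ identity. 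Both routes are standard and equally short; yours avoids complex analysis at the cost of quoting the Beta identity, and sidesteps the auxiliary binomial identity that the paper sets up in advance.
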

\begin{proof}
This follows from \eqref{scalar_symbol_first} by a direct calculation. 
In the first case, the integrand is odd and integrable, hence its integral is zero.

The second and third cases exploit the fact that $\zeta^2+1 = (\zeta - i)(\zeta+i)$, or more precisely, that
%
\begin{eqnarray*}
 \int_{-\infty}^{\infty} \frac{1}{(1+\zeta^2)^{ M} }\dif \zeta 
 &=&
 \frac{2\pi i}{(M-1)!} \left[\left(\frac{d}{d\zeta}\right)^{M-1} \frac{1}{(\zeta+i)^M}\right]_{\zeta = i} \\
 &=&
 \pi 2^{2-2M} \nchoosek{2M-2}{M-1}.
\end{eqnarray*}
%
The second case ($j$ and $k$ both even) follows by taking $M = m - n/2$.
To handle the third case,  observe that 
$\ds 
\frac{\zeta^2}{(1+\zeta^2)^{ M} }
=   
\frac{1}{(1+\zeta^2)^{ M-1} }
  - 
\frac{1}{(1+\zeta^2)^{ M} }.
$  
Thus
\begin{align*} 
\int_{-\infty}^{\infty} \frac{\zeta^2}{(1+\zeta^2)^{ M} }\dif \zeta 
&=
\pi 2^{2-2(M-1)} \b_{M-2} - \pi 2^{2-2M} \b_{M-1} \\
&= 
\pi 2^{2-2M} (4\b_{M-2} -\b_{M-1})
  =
\pi 2^{2-2M} \times2\c_{M-2}.
\end{align*}
The third case now follows by taking
$M = m+1 - n/2$.
\end{proof}

\section{Path Counting and Invertibility of the Matrix Symbol}
\label{sec: path counting and invertibility}
In this section we show that the matrix symbol $\M$ considered in Proposition \ref{scalar_symbol} is invertible by giving an explicit formula for its determinant (Proposition \ref{det_matrix_symbol}). We arrive at the formula by counting non-intersecting paths in certain graphs associated to $\M$. 

We perform a sequence of elementary transformations on $\M$ to uncover the combinatorics at play. First, we remove the powers of $2$ and $-1$ appearing in \eqref{matrix_symbol} by pre- and post-multiplying by diagonal 
matrices, $\M \mapsto D_1 \M D_2$, where
\[
	D_{1} = \diag{ 2^{m-j\,}2^{\oddq j-1}}_{j} 
\qand
	D_2 = \diag{ 2^{m-j\,}(-1)^{\oddq j} }_{j} \,.
\]
Here $j$ runs from 0 to $m{-}1$ and the function $\oddq j$ returns $1$ if $j$ is odd and $0$ otherwise. (E.g., one could put $\oddq{j} := \Oddq{j}$.) The result is a matrix populated with zeros, middle binomial numbers and Catalan numbers in a 
checkerboard pattern. Next, we permute its rows and columns to arrive at the convenient block-diagonal form
$$
\tilde\M = \begin{pmatrix} B & 0 \\ 0 & C \end{pmatrix}\!,
$$
where $B$ and $C$ are Hankel matrices (constant on anti-diagonals), populated with binomial numbers and 
Catalan numbers, respectively. 

\begin{definition} For $n\geq1$, define binomial and shifted binomial matrices by
\[
	\B(n) := \left(\b_{j+k}\right)_{j=0\dots n-1, k=0\dots n-1} 
\quad\hbox{and}\quad
	\sB(n) := \left(\b_{j+k+1}\right)_{j=0\dots n-1, k=0\dots n-1}\,.
\]
For $n\geq1$, define Catalan and shifted Catalan matrices by 
\[
	\C(n) := \left(\c_{j+k}\right)_{j=0\dots n-1, k=0\dots n-1}
\quad\hbox{and}\quad
	\sC(n):= \left(\c_{j+k+1}\right)_{j=0\dots n-1, k=0\dots n-1}\,.
\]
\end{definition}
For example, 
\[
\B(3) = \begin{pmatrix} 1 & 2 & 6 \\ 2 & 6 & 20 \\ 6 & 20 & 70 \end{pmatrix},
\quad 
\sB(3) = \begin{pmatrix} 2 & 6 & 20 \\ 6 & 20 & 70 \\ 20 & 70 & 252 \end{pmatrix}
\quad\hbox{and}\quad
\C(3) = \begin{pmatrix} 1 & 1 & 2 \\ 1 & 2 & 5 \\ 2 & 5 & 14 \end{pmatrix}.
\] 
We have
\begin{gather}\label{eq: tilde M}
\tilde\M = \begin{cases}
	\begin{pmatrix}\B(n+1) & 0 \\ 0 & \sC(n) \end{pmatrix}, & \text{for }m=2n+1\text{ odd,} \\[1ex]
	\begin{pmatrix}\sB(n) & 0 \\ 0 & \C(n) \end{pmatrix}, & \text{for }m=2n\text{ even.} \end{cases}
\end{gather}
Clearly, $\M$ is invertible for all $m\geq1$ if and only if the $\B,\sB,\C,\sC$ are invertible for all $n\geq1$.
Using a computer algebra system to compute determinants, we find some surprising patterns:
\begin{align}
\label{det B} \det\B\, &:\  1,2,4,8,16,32,\ldots \\
\label{det sB} \det\sB &:\  2,4,8,16,32,64\ldots \\
\label{det C} \det\C\, &:\  1,1,1,1,1,1,\ldots \\
\label{det sC} \det\sC &:\  1,1,1,1,1,1,\ldots .
\end{align}
Many techniques have been developed in the combinatorics community to deal with Hankel matrices; see \cite{Krat1,Krat2}. We prove the identities \eqref{det B}--\eqref{det sC} using one of these techniques (path counting) in Sections \ref{Middle binomial path counting} and \ref{Catalan path counting}. They give the following result.
%
%
\begin{proposition}\label{det_matrix_symbol} 
Let $\M =(\M_{j,k})$ be the $m\times m$ matrix in Proposition \ref{scalar_symbol}. Then 
$\displaystyle	
	\det(\M) = (-1)^{\lfloor m/2\rfloor} 2^{m^2}.	
$
\end{proposition}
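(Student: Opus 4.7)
The plan is to follow the reduction already sketched in the excerpt: conjugate $\M$ by the diagonal matrices $D_1,D_2$ to expose a ``clean'' matrix whose entries are middle binomials, Catalan numbers, or zero; permute rows and columns to reach the block-diagonal form $\tilde\M$ in \eqref{eq: tilde M}; and finally evaluate the four Hankel determinants $\det\B(n),\det\sB(n),\det\C(n),\det\sC(n)$ by the Lindström--Gessel--Viennot lemma. Assembling these pieces produces the closed-form formula.

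\textbf{Stage 1 (scalar bookkeeping).} I would first record $\det(D_1)\det(D_2)$ in closed form directly from the diagonal entries. The arithmetic is elementary: $\sum_{j=0}^{m-1}(m-j)=m(m+1)/2$, $\sum_{j=0}^{m-1}(m-j-1)=m(m-1)/2$, and $\sum_{j=0}^{m-1}\oddq{j}=\lfloor m/2\rfloor$. This collapses both the power of $2$ and the overall sign into a single expression, so that the identity $\det(D_1)\det(\M)\det(D_2)=\det(D_1\M D_2)$ isolates $\det(\M)$ in terms of the normalized matrix.

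\textbf{Stage 2 (permutation to block form).} Applying the same permutation $P$ to rows and columns of $D_1\M D_2$ so that even indices precede odd indices yields the block-diagonal form \eqref{eq: tilde M}. Because $\det(P)\det(P)=1$, the determinant is unchanged, and so $\det(\tilde\M)$ factors as $\det\B(n{+}1)\det\sC(n)$ (if $m=2n{+}1$) or $\det\sB(n)\det\C(n)$ (if $m=2n$).

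\textbf{Stage 3 (the four Hankel determinants via LGV).} This is the combinatorial heart of the proof and I expect it to be the main obstacle. For $\C(n)$ and $\sC(n)$: interpret $\c_{j+k}$ (resp.\ $\c_{j+k+1}$) as the number of Dyck paths between suitably chosen endpoints on opposite axes of a lattice, so that $\det\C(n)$ (resp.\ $\det\sC(n)$) counts $n$-tuples of non-intersecting Dyck paths. The classical non-crossing argument leaves exactly one such tuple (parallel paths, associated to the identity permutation), giving $\det\C(n)=\det\sC(n)=1$. For $\B(n)$ and $\sB(n)$: one needs a lattice-path model whose paths are enumerated by middle binomial coefficients; the natural choice is unrestricted walks with two step types on $\ints^2$, so that removing the non-negativity constraint produces the extra factor of $2$ per layer. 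The real work will be in choosing endpoints so that the LGV signed sum collapses onto non-intersecting configurations whose contributions tally to $\det\B(n)=2^{n-1}$ and $\det\sB(n)=2^n$; a reflection/symmetry argument will likely be needed to identify the surviving tuples.

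\textbf{Stage 4 (assembly).} Combining Stages~1--3 and splitting on the parity of $m$ via \eqref{eq: tilde M}, one finds in both cases that $\det(\tilde\M)$ is a specific power of $2$. Inserting this into the identity from Stage~1 and simplifying yields $\det(\M)=(-1)^{\lfloor m/2\rfloor}2^{m^2}$. This step is purely routine arithmetic once Stages~1--3 are in place; the decisive content is the path-counting in Stage~3, and in particular the middle-binomial determinants, which do not follow from the standard Catalan template.
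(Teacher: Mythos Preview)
Your proposal is correct and follows essentially the same approach as the paper: scale by $D_1,D_2$, permute to block form $\tilde\M$, evaluate the four Hankel determinants via Lindstr\"om--Gessel--Viennot, and assemble. The one place where your outline is less concrete than the paper is the middle-binomial case in Stage~3: rather than a reflection argument on unrestricted $\ints^2$ walks, the paper builds a triangular lattice with \emph{doubled} edges leaving the diagonal (so that unfolding recovers the square grid and $\b_{i+j}$ as path counts), after which the unique shape of a non-intersecting $n$-path forces exactly $2^{n-1}$ choices for $\B(n)$ and $2^n$ for $\sB(n)$; this single picture replaces the ``real work'' you anticipate.
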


\begin{proof} 
To produce $\tilde\M$ from $D_1\M D_2$, the rows and columns of $\M$ are permuted by ``deshuffling'' the even and odd indices, then reversing their relative order, e.g., $(0123456\dotsc) \mapsto (0246\dotsc , 135\dotsc) \mapsto (\dotsc6420,\dotsc531)$. This amounts to an orthogonal transformation $(\bm\cdot) \mapsto P (\bm\cdot) P^T$ and thus does not change the determinant. 

Turning to the diagonal matrices $D_1$ and $D_2$, we have 
\begin{gather}\label{eq: diagonal dets}
	\det D_1  = 2^{\lfloor m^2/2 \rfloor}
\quad\hbox{and}\quad
	\det D_2  = 2^{\binom{m+1}{2}}(-1)^{\lfloor m/2 \rfloor}.
\end{gather}
Indeed, the exponent of $2$ in $\det D_1$ is
\[	
	0+2+2+4+4+\dotsb+(m-1)+(m-1) \ = \ 4\left(1+\dotsb+\frac{m-1}2\right) \ = \ \frac{m^2-1}2
\] 
when $m$ is odd, and 
\[	
	1+1+3+3+\dotsb+(m-1)+(m-1) \ = \ 4\left(1+\dotsb+\frac{m}2\right) - m \ = \ \frac{m^2}2
\] 
when $m$ is even. This is the same as $\lfloor m^2/2 \rfloor$. The exponent of $2$ in $\det D_2$ is simpler: 
$1+2+\dotsb+m$. Here, $-1$s occur in the odd positions along the diagonal (indexing by $0\leq j< m$).
 
We next consider the block matrix $\tilde \M$. The identities \eqref{det B}--\eqref{det sC} give $\det (\tilde \M) = 2^{\lfloor m/2\rfloor}$ for all $m\geq2$. 

Finally, we analyze the total contributions of $\tilde \M$, $D_1$ and $D_2$ to the exponent of $2$ in $\det \M$. Since ${m(m-1)}/{2}$ is an integer, we may write
\begin{align*}
	\lfloor m^2/2 \rfloor + \binom{m+1}{2} - \lfloor m/2 \rfloor 
	&= \lfloor m(m-1)/2 + m/2 \rfloor + m(m+1)/2 - \lfloor m/2 \rfloor \\
	&= m(m-1)/2 + \lfloor m/2 \rfloor + m(m+1)/2 - \lfloor m/2 \rfloor = m^2,
\end{align*} 
from which the result follows.
\end{proof}

%
%
\subsection{Path counting with determinants}\label{Path counting with determinants}
Consider the following situation. A taxi cab picks up a passenger in town at the street corner labeled $o$ in Figure \ref{fig: one taxi} and is directed to proceed to the street corner labeled $d$. How many shortest routes are there from $o$ to $d$?
\begin{figure}[!hbt]
\psset{unit=0.8
, linewidth=.06}
\begin{pspicture}(4,2.6)(-1,-1)
\psgrid(-1,-1)(4,2)
\pscircle*[linewidth=0,linecolor=darkgray](0,0){.07}
\pscircle*[linewidth=0,linecolor=darkgray](3,2){.07}
\uput{0.20}[50](3,2){$d$}
\uput{0.20}[220](0,0){$o$}
\ChrPath(0,0.055)(0,1)(1,1)(2,1)(2,2)(2.95,2)

\end{pspicture}
\caption{Count the shortest routes.}
\label{fig: one taxi}
\end{figure}
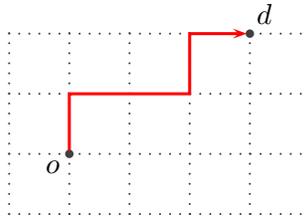
The answer is 10, as one can readily count. Note that each route has five steps and is uniquely determined by when the eastward steps are taken. We have the following generalization.

\begin{lemma} If the taxi must travel $i$ blocks east and $j$ blocks north, then there are $\binom{i+j}{i}$  shortest routes.
\end{lemma}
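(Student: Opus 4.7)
The plan is to argue that every shortest route is uniquely encoded by a binary word of length $i+j$ recording the sequence of moves. First I would observe that any route from $o$ to $d=o+(i,j)$ that travels only east or north has length at least $i+j$, since each step changes exactly one coordinate by $+1$, and the net displacement requires $i$ eastward and $j$ northward steps. A shortest route therefore has exactly $i+j$ steps, consisting of precisely $i$ eastward moves (denoted $E$) and $j$ northward moves (denoted $N$), with no backtracking.

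Next I would set up a bijection between shortest routes and words $w\in\{E,N\}^{i+j}$ containing exactly $i$ copies of $E$. Given a route, read off its step labels in order to obtain such a word; conversely, any such word determines a unique lattice path starting at $o$ by executing the indicated moves, and this path ends at $d$ regardless of the order, since translations commute. Hence the counting reduces to counting binary words of length $i+j$ with exactly $i$ $E$'s, which is the number of ways to choose the $i$ positions of $E$ among $i+j$ slots, namely $\binom{i+j}{i}$.

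No step here is a genuine obstacle: the bijection is immediate and the binomial coefficient interpretation is the defining one. If desired, one could alternatively prove the identity by induction on $i+j$, using the recursion $\binom{i+j}{i} = \binom{i+j-1}{i-1} + \binom{i+j-1}{i}$, which mirrors the observation that every shortest route to $(i,j)$ arrives either from $(i-1,j)$ via a final eastward step or from $(i,j-1)$ via a final northward step. This recursive viewpoint will also be useful later when generalizing to counts of non-intersecting path families via the Lindström–Gessel–Viennot lemma, so it is worth noting in passing even though the direct bijective proof already suffices.
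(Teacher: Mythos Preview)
Your argument is correct and matches the paper's approach: the paper states the lemma without a formal proof, simply noting (in the preceding specific case) that each shortest route ``is uniquely determined by when the eastward steps are taken,'' which is exactly your bijection with words in $\{E,N\}^{i+j}$ having $i$ copies of $E$.
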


Now suppose there are $n$ taxi cabs originating from distinct corners $o_0,\ldots, o_{n-1}$ and destined for distinct corners $d_0,\ldots, d_{n-1}$. Given a permutation $\sigma \in \Sym_{n}$, an \demph{$n$-path} of type $\sigma$ is the pairing of taxis and destinations $o_i \leftrightarrow d_{\sigma(i)}$ together with a choice of shortest route for each cab. 
We might ask: How many different $n$-paths are there such that no two taxis' paths share an intersection? (See Figure \ref{fig: two taxis}.) 

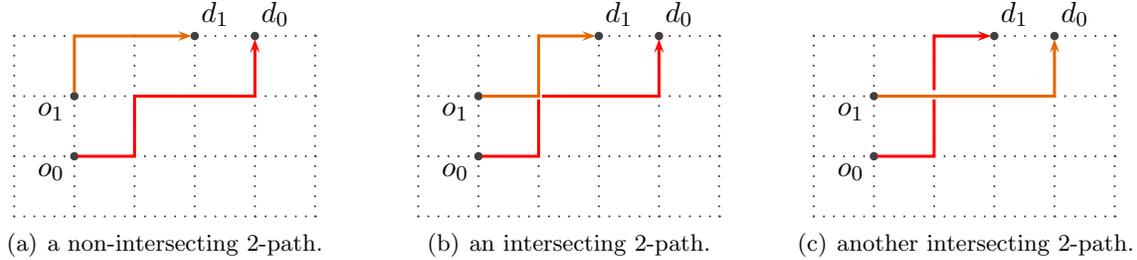
\begin{figure}[!hbt]
\centering
\subfigure[a non-intersecting $2$-path.]{%
\psset{unit=0.8, linewidth=.06}
\begin{pspicture}(5.5,3.6)(-.5,0)
\psgrid(0,0)(5,3)
\pscircle*[linewidth=0,linecolor=darkgray](1,1){.07}
\pscircle*[linewidth=0,linecolor=darkgray](4,3){.07}
\uput{0.20}[220](1,1){$o_0$}
\uput{0.20}[50](4,3){$d_0$}
\pscircle*[linewidth=0,linecolor=darkgray](1,2){.07}
\pscircle*[linewidth=0,linecolor=darkgray](3,3){.07}
\uput{0.20}[220](1,2){$o_1$}
\uput{0.20}[50](3,3){$d_1$}
\ChrPath[myred](1.055,1)(2,1)(2,2)(3,2)(4,2)(4,2.95)
\ChrPath[myorange](1,2.055)(1,3)(2,3)(2.95,3)
\end{pspicture}}
\hskip2.2em
\subfigure[an intersecting $2$-path.]{%
\psset{unit=0.8
, linewidth=.06}
\begin{pspicture}(5,3.6)(0,0)
\psgrid(0,0)(5,3)
\pscircle*[linewidth=0,linecolor=darkgray](1,1){.07}
\pscircle*[linewidth=0,linecolor=darkgray](4,3){.07}
\uput{0.20}[220](1,1){$o_0$}
\uput{0.20}[50](4,3){$d_0$}
\pscircle*[linewidth=0,linecolor=darkgray](1,2){.07}
\pscircle*[linewidth=0,linecolor=darkgray](3,3){.07}
\uput{0.20}[220](1,2){$o_1$}
\uput{0.20}[50](3,3){$d_1$}
\ChrPath[myred](1.055,1)(2,1)(2,2)(3,2)(4,2)(4,2.95)
\psline[linecolor=white,linewidth=.11](1.9,2)(2,2)(2,2.1)
\ChrPath[myorange](1.055,2)(2,2)(2,3)(2.95,3)
\end{pspicture}}
\hskip2em
\subfigure[another intersecting $2$-path.]{%
\psset{unit=0.8
, linewidth=.06}
\begin{pspicture}(5.5,3.6)(-0.45,0)
\psgrid(0,0)(5,3)
\pscircle*[linewidth=0,linecolor=darkgray](1,1){.07}
\pscircle*[linewidth=0,linecolor=darkgray](4,3){.07}
\uput{0.20}[220](1,1){$o_0$}
\uput{0.20}[50](4,3){$d_0$}
\pscircle*[linewidth=0,linecolor=darkgray](1,2){.07}
\pscircle*[linewidth=0,linecolor=darkgray](3,3){.07}
\uput{0.20}[220](1,2){$o_1$}
\uput{0.20}[50](3,3){$d_1$}
\ChrPath[myred](1.055,1)(2,1)(2,2)(2,3)(2.95,3)
\psline[linecolor=white,linewidth=.11](1.9,2)(2,2)(2,2.1)
\ChrPath[myorange](1.055,2)(2,2)(3,2)(4,2)(4,2.95)
\end{pspicture}}
\caption{How many non-intersecting $2$-paths?}
\label{fig: two taxis}
\end{figure}

The answer to the question in Figure \ref{fig: two taxis} is $15$, which also happens to be the determinant of the matrix
\[
\begin{pmatrix}\binom{3+2}{3} & \binom{2+2}{2} \\[1ex]
		\binom{2+1}{2} & \binom{3+1}{3}\end{pmatrix}\!,
\]
whose $(i,j)$-th entry is the number of shortest routes from origin $o_i$ to destination $d_j$. This  coincidence is a special case of a theorem first discovered by Karlin and McGregor \cite{KMc}, rediscovered by Lindstr\"om \cite{Lind} and popularized by Gessel and Viennot \cite{GV}. To properly state it, we first need a bit more notation.

A directed graph $G$ is a pair of finite sets $(\mathcal V,\mathcal E)$, where $\mathcal E \subseteq \mathcal V\times \mathcal V$. The elements $v\in \mathcal V$ are called \demph{vertices} and the elements $(v,w)\in\mathcal E$ are called \demph{(directed) edges.} A \demph{path} of length $k$ from one vertex $o$ of $G$ to another $d$ is a sequence of edges $e_1e_2\dotsb e_k$ satisfying: (a) the origin of $e_1$ is $o$; (b) the destination of $e_i$ is the origin of $e_{i+1}$ (for all $1\leq i < k$); and (c) the destination of $e_k$ is $d$. (One such path is shown in Figure \ref{fig: one taxi}; edges point north or east.) A graph is \demph{acyclic} if there is no (positive-length) path from any $v\in \mathcal V$ to itself. Given $n$ origin and destination vertices, $\{o_i\},\{d_i\} \subseteq \mathcal V$,  an \demph{$n$-path} in $G$ is defined as in the preceding lattice path discussion; it is non-intersecting when no vertex of $G$ is used twice. A choice of origin and destination vertices is called \demph{non-permutable} if all non-intersecting $n$-paths are forced to pair $o_i$ with $d_i$ (for all $0\leq i <n$).

\begin{theorem}\label{thm: path counting} 
Let $G$ be a directed acyclic graph with designated origin and destination nodes $\{o_i\}$ and $\{d_i\}$ $(0\leq i <n)$, and let $A$ be the $n\times n$ matrix whose $(i,j)$-th entry is the number of paths in $G$ from $o_i$ to $d_j$. If $G$ is non-permutable, then the number of nonintersecting $n$-paths is equal to the determinant of $A$.
\end{theorem}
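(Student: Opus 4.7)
The plan is to expand $\det A$ as a signed sum over permutations and identify the combinatorial meaning of each term. Specifically,
$$\det A \;=\; \sum_{\sigma\in\Sym_n}\mathrm{sgn}(\sigma)\prod_{i=0}^{n-1} A_{i,\sigma(i)},$$
and the product $\prod_i A_{i,\sigma(i)}$ is exactly the number of $n$-path configurations $(P_0,\ldots,P_{n-1})$ of type $\sigma$ (paths in $G$ pairing $o_i$ with $d_{\sigma(i)}$). Hence
$$\det A \;=\; \sum_{\sigma\in\Sym_n}\mathrm{sgn}(\sigma)\,N_\sigma,$$
where $N_\sigma$ counts all $n$-paths of type $\sigma$. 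By the non-permutability hypothesis, every non-intersecting $n$-path has type $\mathrm{id}$, so these configurations contribute exactly $+\#\{\text{non-intersecting $n$-paths}\}$ to the signed sum. It therefore suffices to show that the intersecting $n$-paths cancel out when weighted by the sign of their type.

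To produce this cancellation, I would construct a sign-reversing involution $\Phi$ on the set of intersecting $n$-paths. Fix once and for all a total order on $\mathcal{V}$. Given an intersecting configuration $P=(P_0,\ldots,P_{n-1})$ of type $\sigma$, let $i$ be the smallest index whose path shares a vertex with some other $P_k$ ($k\neq i$); among such $k$, let $j$ be the smallest; finally, let $v$ be the earliest vertex along $P_i$ that also lies on $P_j$. Acyclicity is essential here: it guarantees that no path revisits a vertex, so ``earliest'' is well-defined along either $P_i$ or $P_j$. Now swap the tails at $v$: form $P'_i$ by following $P_i$ to $v$ and then continuing along $P_j$ from $v$ to $d_{\sigma(j)}$, and analogously $P'_j$; leave the remaining paths unchanged. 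The resulting configuration $\Phi(P)$ is again an $n$-path, now of type $\sigma\circ(i,j)$, whose sign is $-\mathrm{sgn}(\sigma)$.

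The main technical point to verify, and the step I expect to require the most care, is that $\Phi^2 = \mathrm{id}$. The key observation is that the \emph{union} of vertices occupied by the pair $\{P_i,P_j\}$ is preserved under the swap; only the pairing into two paths changes. Consequently, the pair of indices of offending paths, and the minimal crossing vertex $v$, selected by the rule applied to $\Phi(P)$ coincide with those selected for $P$. A second swap at the same vertex therefore returns the original configuration. Since $\Phi$ pairs intersecting $n$-paths of type $\sigma$ with intersecting $n$-paths of type $\sigma\circ(i,j)$ carrying opposite signs, the total signed contribution from intersecting configurations vanishes. What remains is precisely the count of non-intersecting $n$-paths (all of type $\mathrm{id}$ by non-permutability), establishing the claimed equality $\det A = \#\{\text{non-intersecting $n$-paths}\}$.
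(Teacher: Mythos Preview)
Your approach---expanding $\det A$ over permutations and cancelling intersecting $n$-paths via a sign-reversing tail-swap involution---is exactly the Lindstr\"om--Gessel--Viennot argument the paper sketches (there phrased with edge-labelled path monomials, but the content is identical).

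There is, however, a genuine flaw in the specific involution rule you wrote down. You select $i$ minimal, then $j$ minimal among indices whose path meets $P_i$, and \emph{only then} the earliest common vertex $v$ of $P_i$ and $P_j$. Your ``union preserved'' observation does not force $\Phi^2=\mathrm{id}$ under this rule: after the swap, $P_i'$ contains the tail $P_j[v\!\to]$, and a path $P_k$ with $i<k<j$ that originally missed $P_i$ (hence was passed over when choosing $j$) but happened to meet $P_j$ beyond $v$ will now meet $P_i'$. In the swapped configuration the minimal index intersecting $P_i'$ may therefore drop to $k<j$, and a second application of $\Phi$ will not undo the first. The standard remedy is to reverse the order of the last two choices: first let $v$ be the earliest vertex along $P_i$ lying on \emph{any} other path, and then take $j$ to be the smallest index with $v\in P_j$. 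With this rule the initial segment of $P_i$ up to $v$ meets no other path before $v$, so after swapping $v$ is again the first collision along $P_i'$; since the set of paths passing through $v$ is unchanged, the same $j$ is selected, and the involution property follows.
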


\begin{proof}[Sketch of Proof] We illustrate the key idea of the proof before turning to its application (cf. Figure \ref{fig: labeled edges}). First, label each edge of $G$. Now, instead of counting the paths from $o_i$ to $d_j$, sum the corresponding path monomials to build a matrix $\tilde A$. (Setting all variables equal to $1$ gives the matrix $A$ in the theorem.) 
\begin{figure}[!hbt]
\centering
\subfigure[an intersecting $2$-path.]{%
\psset{unit=1.0, linewidth=.06}
\begin{pspicture}(5.5,3.6)(-.7,0.8)
\psgrid(1,1)(4,3)
\pscircle*[linewidth=0,linecolor=darkgray](1,1){.05}
\pscircle*[linewidth=0,linecolor=darkgray](3,2){.05}
\pscircle*[linewidth=0,linecolor=darkgray](1,2){.05}
\pscircle*[linewidth=0,linecolor=darkgray](2,3){.05}
\ChrPath[myred](1.055,1)(2,1)(2,2)(2.95,2)
\psline[linecolor=white,linewidth=.11](1.9,2)(2,2)(2,2.1)
\ChrPath[myorange](1.055,2)(2,2)(2,2.95)
\uput{0.10}[90](1.4,1){\color{myred}{\small$a$}}
\uput{0.10}[90](2.4,1){\color{gray}{\small$b$}}
\uput{0.10}[90](1.4,2){\color{myorange}{\small$c$}}
\uput{0.10}[90](2.4,2){\color{myred}{\small$d$}}
\uput{0.10}[90](1.4,3){\color{gray}{\small$e$}}
\uput{0.10}[90](2.4,3){\color{gray}{\small$f$}}
\uput{0.10}[180](1,1.6){\color{gray}{\small$u$}}
\uput{0.10}[180](1,2.6){\color{gray}{\small$v$}}
\uput{0.10}[180](2,1.6){\color{myred}{\small$w$}}
\uput{0.10}[180](2,2.6){\color{myorange}{\small$x$}}
\uput{0.10}[180](3,1.6){\color{gray}{\small$y$}}
\uput{0.10}[180](3,2.6){\color{gray}{\small$z$}}
\end{pspicture}\label{fig: labeled-a}}
\hskip3em
\subfigure[another intersecting $2$-path.]{%
\psset{unit=1.0
, linewidth=.06}
\begin{pspicture}(5.5,3.6)(-.6,0.8)
\psgrid(1,1)(4,3)
\pscircle*[linewidth=0,linecolor=darkgray](1,1){.05}
\pscircle*[linewidth=0,linecolor=darkgray](3,2){.05}
\pscircle*[linewidth=0,linecolor=darkgray](1,2){.05}
\pscircle*[linewidth=0,linecolor=darkgray](2,3){.05}
\ChrPath[myred](1.055,1)(2,1)(2,2)(2,2.95)
\psline[linecolor=white,linewidth=.11](1.9,2)(2.1,2)
\ChrPath[myorange](1.055,2)(2,2)(2.95,2)
\uput{0.10}[90](1.4,1){\color{myred}{\small$a$}}
\uput{0.10}[90](2.4,1){\color{gray}{\small$b$}}
\uput{0.10}[90](1.4,2){\color{myorange}{\small$c$}}
\uput{0.10}[90](2.4,2){\color{myorange}{\small$d$}}
\uput{0.10}[90](1.4,3){\color{gray}{\small$e$}}
\uput{0.10}[90](2.4,3){\color{gray}{\small$f$}}
\uput{0.10}[180](1,1.6){\color{gray}{\small$u$}}
\uput{0.10}[180](1,2.6){\color{gray}{\small$v$}}
\uput{0.10}[180](2,1.6){\color{myred}{\small$w$}}
\uput{0.10}[180](2,2.6){\color{myred}{\small$x$}}
\uput{0.10}[180](3,1.6){\color{gray}{\small$y$}}
\uput{0.10}[180](3,2.6){\color{gray}{\small$z$}}
\end{pspicture}\label{fig: labeled-b}}
\caption[Sample matrix $\tilde A(G)$]{$\tilde A(G) = 
\protect{\begin{pmatrix}  
	ucd + awd + aby  &  uve + ucx + awx \\ 
	cd  &  ve + cx
\end{pmatrix}}$.}
\label{fig: labeled edges}
\end{figure}
The idea is to use the permutation definition of determinant, $\det\tilde A = \sum{\sigma\in\Sym_n} \mathrm{sgn}(\sigma) a_{0\sigma(0)}a_{1\sigma(1)}\dotsb a_{{n-1}\sigma(n-1)}$. Notice that the determinant of $\tilde A$ sees any intersection within an $n$-path twice: once with $\mathrm{sgn}(\sigma)=+1$ and once with $\mathrm{sgn}(\sigma)=-1$. 
Hence the only terms surviving in $\det\tilde A$ come from non-intersecting $n$-paths.
\end{proof}

If the number of non-intersecting $n$-paths in a graph $G$ can be computed by inspection, then the theorem provides a simple way to compute $\det A$. We now apply this technique to compute the determinants of $\B(n)$, $\sB(n)$, $\C(n)$ and $\sC(n)$ for all $n\geq1$.

%
%
\subsection{Middle binomial path counting}
\label{Middle binomial path counting}
Let $G_n$ be the triangular lattice with $2n{-}1$ vertices on the diagonal. Arrange the origin and destination vertices in opposite directions along the diagonal, starting from the center. Orient lattice edges northward and eastward, as in the taxi discussion, and let there be two edges northward from the main diagonal; see Figure \ref{fig: folded-a}. The $n$-paths in $G_n$ are in $1${-}$1$ correspondence with $n$-paths in a square lattice; see Figure \ref{fig: folded-c}. 
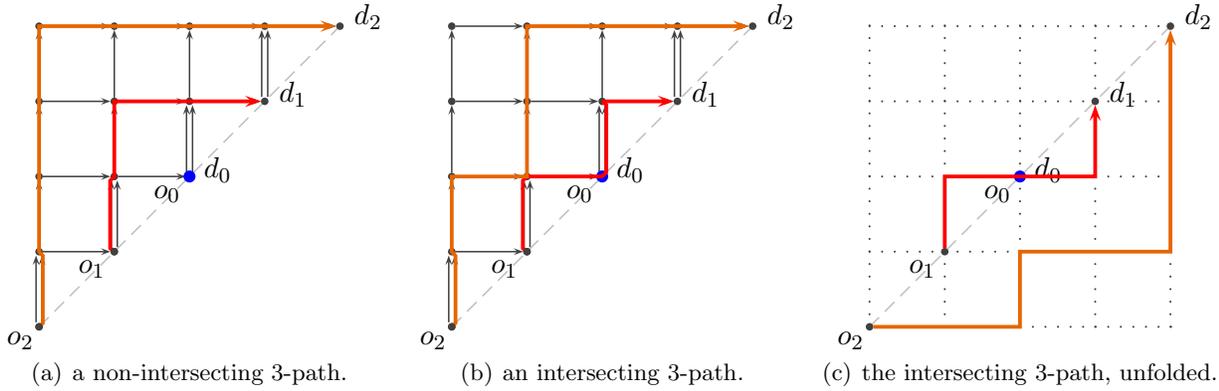
\begin{figure}[!hbt]
\subfigure[a non-intersecting $3$-path.]{%
\psset{unit=1.0, linewidth=.02,linecolor=darkgray,nodesep=0}
\begin{pspicture}(2.15,2.25)(-2.15,-2.25)
%
\psline[linewidth=0.02,linecolor=lightgray,linestyle=dashed]{-}(-2,-2)(2,2)
\cnode*[linewidth=0](-2,-2){.05}{P00}
\cnode*[linewidth=0](-2,-1){.05}{P01}
\cnode*[linewidth=0](-2,0){.05}{P02}
\cnode*[linewidth=0](-2,1){.05}{P03}
\cnode*[linewidth=0](-2,2){.05}{P04}
\cnode*[linewidth=0](-1,-1){.05}{P11}
\cnode*[linewidth=0](-1,0){.05}{P12}
\cnode*[linewidth=0](-1,1){.05}{P13}
\cnode*[linewidth=0](-1,2){.05}{P14}
\cnode*[linewidth=0](0,0){.05}{P22}
\cnode*[linewidth=0](0,1){.05}{P23}
\cnode*[linewidth=0](0,2){.05}{P24}
\cnode*[linewidth=0](1,1){.05}{P33}
\cnode*[linewidth=0](1,2){.05}{P34}
\cnode*[linewidth=0](2,2){.05}{P44}
%
\ncline[offset=-.04,linestyle=solid,dotsep=.08,nodesepA=0.01]{->}{P00}{P01}
\ncline[offset=.04,linestyle=solid,dotsep=.08,nodesepA=0.01]{->}{P00}{P01}
\ncline[offset=-.04,linestyle=solid,dotsep=.08,nodesepA=0.01]{->}{P11}{P12}
\ncline[offset=.04,linestyle=solid,dotsep=.08,nodesepA=0.01]{->}{P11}{P12}
\ncline[offset=-.04,linestyle=solid,dotsep=.08,nodesepA=0.01]{->}{P22}{P23}
\ncline[offset=.04,linestyle=solid,dotsep=.08,nodesepA=0.01]{->}{P22}{P23}
\ncline[offset=-.04,linestyle=solid,dotsep=.08,nodesepA=0.01]{->}{P33}{P34}
\ncline[offset=.04,linestyle=solid,dotsep=.08,nodesepA=0.01]{->}{P33}{P34}
%
%
\ncline[linestyle=solid,dotsep=.08]{->}{P01}{P02}
\ncline[linestyle=solid,dotsep=.08]{->}{P02}{P03}
\ncline[linestyle=solid,dotsep=.08]{->}{P03}{P04}
\ncline[linestyle=solid,dotsep=.08]{->}{P12}{P13}
\ncline[linestyle=solid,dotsep=.08]{->}{P13}{P14}
\ncline[linestyle=solid,dotsep=.08]{->}{P23}{P24}
\ncline[linestyle=solid,dotsep=.08]{->}{P01}{P11}
\ncline[linestyle=solid,dotsep=.08]{->}{P02}{P12}
\ncline[linestyle=solid,dotsep=.08]{->}{P03}{P13}
\ncline[linestyle=solid,dotsep=.08]{->}{P04}{P14}
\ncline[linestyle=solid,dotsep=.08]{->}{P12}{P22}
\ncline[linestyle=solid,dotsep=.08]{->}{P13}{P23}
\ncline[linestyle=solid,dotsep=.08]{->}{P14}{P24}
\ncline[linestyle=solid,dotsep=.08]{->}{P23}{P33}
\ncline[linestyle=solid,dotsep=.08]{->}{P24}{P34}
\ncline[linestyle=solid,dotsep=.08]{->}{P34}{P44}
\uput{0.17}[225](0,0){$o_0$}
\uput{0.17}[225](-1,-1){$o_1$}
\uput{0.10}[225](-2,-2){$o_2$}
\uput{0.19}[15](0,0){$d_0$}
\uput{0.19}[15](1,1){$d_1$}
\uput{0.19}[20](2,2){$d_2$}
\pscircle*[linewidth=0.05,linecolor=myblue](0,0){.08}
\ChrPath[myred](-1,-1)(-1.055,-0.955)(-1.055,-0.055)(-1,0)(-1,1)(0.95,1)
\ChrPath[myorange](-2,-2)(-1.95,-1.95)(-1.95,-1.055)(-2,-1)(-2,2)(0,2)(1,2)(1.95,2)
\pscircle*[linewidth=0.05,linecolor=darkgray](-2,-2){.05}
\pscircle*[linewidth=0.05,linecolor=darkgray](-1,-1){.05}
\end{pspicture}\label{fig: folded-a}}
\hskip2.75em
\subfigure[an intersecting $3$-path.]{%
\psset{unit=1.0, linewidth=.02,linecolor=darkgray,nodesep=0}
\begin{pspicture}(2.15,2.25)(-2.15,-2.25)
%
\psline[linewidth=0.02,linecolor=lightgray,linestyle=dashed]{-}(-2,-2)(2,2)
\cnode*[linewidth=0](-2,-2){.05}{P00}
\cnode*[linewidth=0](-2,-1){.05}{P01}
\cnode*[linewidth=0](-2,0){.05}{P02}
\cnode*[linewidth=0](-2,1){.05}{P03}
\cnode*[linewidth=0](-2,2){.05}{P04}
\cnode*[linewidth=0](-1,-1){.05}{P11}
\cnode*[linewidth=0](-1,0){.05}{P12}
\cnode*[linewidth=0](-1,1){.05}{P13}
\cnode*[linewidth=0](-1,2){.05}{P14}
\cnode*[linewidth=0](0,0){.05}{P22}
\cnode*[linewidth=0](0,1){.05}{P23}
\cnode*[linewidth=0](0,2){.05}{P24}
\cnode*[linewidth=0](1,1){.05}{P33}
\cnode*[linewidth=0](1,2){.05}{P34}
\cnode*[linewidth=0](2,2){.05}{P44}
%
\ncline[offset=-.04,linestyle=solid,dotsep=.08,nodesepA=0.01]{->}{P00}{P01}
\ncline[offset=.04,linestyle=solid,dotsep=.08,nodesepA=0.01]{->}{P00}{P01}
\ncline[offset=-.04,linestyle=solid,dotsep=.08,nodesepA=0.01]{->}{P11}{P12}
\ncline[offset=.04,linestyle=solid,dotsep=.08,nodesepA=0.01]{->}{P11}{P12}
\ncline[offset=-.04,linestyle=solid,dotsep=.08,nodesepA=0.01]{->}{P22}{P23}
\ncline[offset=.04,linestyle=solid,dotsep=.08,nodesepA=0.01]{->}{P22}{P23}
\ncline[offset=-.04,linestyle=solid,dotsep=.08,nodesepA=0.01]{->}{P33}{P34}
\ncline[offset=.04,linestyle=solid,dotsep=.08,nodesepA=0.01]{->}{P33}{P34}
%
%
\ncline[linestyle=solid,dotsep=.08]{->}{P01}{P02}
\ncline[linestyle=solid,dotsep=.08]{->}{P02}{P03}
\ncline[linestyle=solid,dotsep=.08]{->}{P03}{P04}
\ncline[linestyle=solid,dotsep=.08]{->}{P12}{P13}
\ncline[linestyle=solid,dotsep=.08]{->}{P13}{P14}
\ncline[linestyle=solid,dotsep=.08]{->}{P23}{P24}
\ncline[linestyle=solid,dotsep=.08]{->}{P01}{P11}
\ncline[linestyle=solid,dotsep=.08]{->}{P02}{P12}
\ncline[linestyle=solid,dotsep=.08]{->}{P03}{P13}
\ncline[linestyle=solid,dotsep=.08]{->}{P04}{P14}
\ncline[linestyle=solid,dotsep=.08]{->}{P12}{P22}
\ncline[linestyle=solid,dotsep=.08]{->}{P13}{P23}
\ncline[linestyle=solid,dotsep=.08]{->}{P14}{P24}
\ncline[linestyle=solid,dotsep=.08]{->}{P23}{P33}
\ncline[linestyle=solid,dotsep=.08]{->}{P24}{P34}
\ncline[linestyle=solid,dotsep=.08]{->}{P34}{P44}
\uput{0.17}[225](0,0){$o_0$}
\uput{0.17}[225](-1,-1){$o_1$}
\uput{0.10}[225](-2,-2){$o_2$}
\uput{0.19}[15](0,0){$d_0$}
\uput{0.19}[15](1,1){$d_1$}
\uput{0.19}[20](2,2){$d_2$}
\pscircle*[linewidth=0.05,linecolor=myblue](0,0){.08}
\psline[linewidth=0.05,linecolor=myred]{-}(-1,-1)(-1.055,-0.955)(-1.055,-0.055)(-1,0)(0,0)(0.055,0.055)(0.055,0.955)(0,1)
\ChrPath[myred](0,1)(0.95,1)
\ChrPath[myorange](-2,-2)(-1.95,-1.95)(-1.95,-1.055)(-2,-1)(-2,0)(-1,0)(-1,2)(0,2)(1,2)(1.95,2)
\pscircle*[linewidth=0.05,linecolor=darkgray](-2,-2){.05}
\pscircle*[linewidth=0.05,linecolor=darkgray](-1,-1){.05}
\end{pspicture}\label{fig: folded-b}}
\hskip1.5em
\subfigure[the intersecting $3$-path, unfolded.]{%
\psset{unit=1.0, linewidth=.02,linecolor=darkgray,nodesep=0}
\begin{pspicture}(2.7,2.25)(-2.7,-2.25)
\psgrid(-2,-2)(2,2)
\psline[linewidth=0.02,linecolor=lightgray,linestyle=dashed]{-}(-2,-2)(2,2)
\cnode*[linewidth=0](-2,-2){.05}{P00}
\cnode*[linewidth=0](-1,-1){.05}{P11}
\cnode*[linewidth=0](0,0){.05}{P22}
\cnode*[linewidth=0](1,1){.05}{P33}
\cnode*[linewidth=0](2,2){.05}{P44}
\uput{0.17}[225](0,0){$o_0$}
\uput{0.17}[225](-1,-1){$o_1$}
\uput{0.10}[225](-2,-2){$o_2$}
\uput{0.19}[15](0,0){$d_0$}
\uput{0.19}[15](1,1){$d_1$}
\uput{0.19}[20](2,2){$d_2$}
\pscircle*[linewidth=0.05,linecolor=myblue](0,0){.08}
\ChrPath[myred](-1,-0.95)(-1,0)(0,0)(1,0)(1,0.95)
\ChrPath[myorange](-1.95,-2)(-1,-2)(0,-2)(0,-1)(1,-1)(2,-1)(2,0)(2,1)(2,1.95)
\end{pspicture}\label{fig: folded-c}}
\caption{How many non-intersecting $3$-paths?}
\label{fig: folded}
\end{figure}

Figure \ref{fig: folded-c} makes it clear that $A(G_n)$ is populated with middle binomial numbers. Specifically, $A_{ij} = \b_{i+j}.$ 
That is, $A(G_n) = \B(n)$. 
Figure \ref{fig: folded-a} makes it clear that there are precisely $2^{n-1}$ distinct non-intersecting $n$-paths. Conclude that $\det\B(n) = 2^{n-1}$.

The fact that $\det\sB(n) = 2^{n}$ is now easy to see. Let $G'_n$ be as above, except having $2n$ vertices on the diagonal. Again arrange the $n$ origin and destination vertices along the diagonal. Since $o_0$ and $d_0$ no longer overlap, we get an extra factor of $2$ in the determinant.

%
%
\subsection{Catalan path counting}\label{Catalan path counting} 
Consider the triangular lattice $T_n$ with $n{+}1$ vertices on the diagonal; see Figure \ref{fig: catalan}. Among the many things the Catalan numbers are known to count are the distinct lattice paths between the southwest and northeast corners of $T_n$ \cite[Exercise 6.19(h)]{Sta}---commonly known as \demph{Dyck paths}. These are counted by  $\c_n = \binom{2n}{n} - \binom{2n}{n+1}$ using a reflection trick of Aebly \cite{Aeb,Gar1,Gar2,Kos}. Briefly, the binomial coefficient $\binom{2n}{n}$ counts paths in the square grid from $(0,0)$ to $(n,n)$. The paths that cross the diagonal are removed from the count by identifying them with paths from $(0,0)$ to $(n{+}1,n)$ that end in a vertical step $\binom{2n}{n+1}$.\footnote{Recall that $\binom{2n+1}{n+1} = \binom{2n}{n+1} + \binom{2n}{n}$. The first summand accounts for paths ending in a vertical step (there are still $n{+}1$ horizontal steps to take); the second for paths ending in a horizontal step.}
\begin{figure}[!hbt]
\psset{unit=0.8, linewidth=.02,linecolor=darkgray,nodesep=0}
\begin{pspicture}(1.15,1.15)(-2.15,-1.85)
%
\psline[linewidth=0.02,linecolor=lightgray,linestyle=dashed]{-}(-2,-2)(1,1)
\cnode*[linewidth=0](-2,-2){.07}{P00}
\cnode*[linewidth=0](-2,-1){.07}{P01}
\cnode*[linewidth=0](-2,0){.07}{P02}
\cnode*[linewidth=0](-2,1){.07}{P03}
\cnode*[linewidth=0](-1,-1){.07}{P11}
\cnode*[linewidth=0](-1,0){.07}{P12}
\cnode*[linewidth=0](-1,1){.07}{P13}
\cnode*[linewidth=0](0,0){.07}{P22}
\cnode*[linewidth=0](0,1){.07}{P23}
\cnode*[linewidth=0](1,1){.07}{P33}
%
\ncline[linestyle=solid,dotsep=.08]{->}{P00}{P01}
\ncline[linestyle=solid,dotsep=.08]{->}{P11}{P12}
\ncline[linestyle=solid,dotsep=.08]{->}{P22}{P23}
\ncline[linestyle=solid,dotsep=.08]{->}{P01}{P02}
\ncline[linestyle=solid,dotsep=.08]{->}{P02}{P03}
\ncline[linestyle=solid,dotsep=.08]{->}{P12}{P13}
\ncline[linestyle=solid,dotsep=.08]{->}{P01}{P11}
\ncline[linestyle=solid,dotsep=.08]{->}{P02}{P12}
\ncline[linestyle=solid,dotsep=.08]{->}{P03}{P13}
\ncline[linestyle=solid,dotsep=.08]{->}{P12}{P22}
\ncline[linestyle=solid,dotsep=.08]{->}{P13}{P23}
\ncline[linestyle=solid,dotsep=.08]{->}{P14}{P24}
\ncline[linestyle=solid,dotsep=.08]{->}{P23}{P33}
\ncline[linestyle=solid,dotsep=.08]{->}{P24}{P34}
\ChrPath[myred](-2,-1.95)(-2,-1)(-2,1)(0.95,1)
\pcline[offset=-10pt]{|-|}(-2,-2)(1,1)
\lput*{:U}{3 steps}
\end{pspicture}
\hskip1.0em
\begin{pspicture}(1.15,1.15)(-2.15,-1.85)
%
\psline[linewidth=0.02,linecolor=lightgray,linestyle=dashed]{-}(-2,-2)(1,1)
\cnode*[linewidth=0](-2,-2){.07}{P00}
\cnode*[linewidth=0](-2,-1){.07}{P01}
\cnode*[linewidth=0](-2,0){.07}{P02}
\cnode*[linewidth=0](-2,1){.07}{P03}
\cnode*[linewidth=0](-1,-1){.07}{P11}
\cnode*[linewidth=0](-1,0){.07}{P12}
\cnode*[linewidth=0](-1,1){.07}{P13}
\cnode*[linewidth=0](0,0){.07}{P22}
\cnode*[linewidth=0](0,1){.07}{P23}
\cnode*[linewidth=0](1,1){.07}{P33}
%
\ncline[linestyle=solid,dotsep=.08]{->}{P00}{P01}
\ncline[linestyle=solid,dotsep=.08]{->}{P11}{P12}
\ncline[linestyle=solid,dotsep=.08]{->}{P22}{P23}
\ncline[linestyle=solid,dotsep=.08]{->}{P01}{P02}
\ncline[linestyle=solid,dotsep=.08]{->}{P02}{P03}
\ncline[linestyle=solid,dotsep=.08]{->}{P12}{P13}
\ncline[linestyle=solid,dotsep=.08]{->}{P01}{P11}
\ncline[linestyle=solid,dotsep=.08]{->}{P02}{P12}
\ncline[linestyle=solid,dotsep=.08]{->}{P03}{P13}
\ncline[linestyle=solid,dotsep=.08]{->}{P12}{P22}
\ncline[linestyle=solid,dotsep=.08]{->}{P13}{P23}
\ncline[linestyle=solid,dotsep=.08]{->}{P14}{P24}
\ncline[linestyle=solid,dotsep=.08]{->}{P23}{P33}
\ncline[linestyle=solid,dotsep=.08]{->}{P24}{P34}
\ChrPath[myred](-2,-1.95)(-2,0)(-1,0)(-1,1)(0.95,1)
\end{pspicture}
\hskip1.0em
\begin{pspicture}(1.15,1.15)(-2.15,-1.85)
%
\psline[linewidth=0.02,linecolor=lightgray,linestyle=dashed]{-}(-2,-2)(1,1)
\cnode*[linewidth=0](-2,-2){.07}{P00}
\cnode*[linewidth=0](-2,-1){.07}{P01}
\cnode*[linewidth=0](-2,0){.07}{P02}
\cnode*[linewidth=0](-2,1){.07}{P03}
\cnode*[linewidth=0](-1,-1){.07}{P11}
\cnode*[linewidth=0](-1,0){.07}{P12}
\cnode*[linewidth=0](-1,1){.07}{P13}
\cnode*[linewidth=0](0,0){.07}{P22}
\cnode*[linewidth=0](0,1){.07}{P23}
\cnode*[linewidth=0](1,1){.07}{P33}
%
\ncline[linestyle=solid,dotsep=.08]{->}{P00}{P01}
\ncline[linestyle=solid,dotsep=.08]{->}{P11}{P12}
\ncline[linestyle=solid,dotsep=.08]{->}{P22}{P23}
\ncline[linestyle=solid,dotsep=.08]{->}{P01}{P02}
\ncline[linestyle=solid,dotsep=.08]{->}{P02}{P03}
\ncline[linestyle=solid,dotsep=.08]{->}{P12}{P13}
\ncline[linestyle=solid,dotsep=.08]{->}{P01}{P11}
\ncline[linestyle=solid,dotsep=.08]{->}{P02}{P12}
\ncline[linestyle=solid,dotsep=.08]{->}{P03}{P13}
\ncline[linestyle=solid,dotsep=.08]{->}{P12}{P22}
\ncline[linestyle=solid,dotsep=.08]{->}{P13}{P23}
\ncline[linestyle=solid,dotsep=.08]{->}{P14}{P24}
\ncline[linestyle=solid,dotsep=.08]{->}{P23}{P33}
\ncline[linestyle=solid,dotsep=.08]{->}{P24}{P34}
\ChrPath[myred](-2,-1.95)(-2,0)(0,0)(0,1)(0.95,1)
\end{pspicture}
\hskip1.0em
\begin{pspicture}(1.15,1.15)(-2.15,-1.85)
%
\psline[linewidth=0.02,linecolor=lightgray,linestyle=dashed]{-}(-2,-2)(1,1)
\cnode*[linewidth=0](-2,-2){.07}{P00}
\cnode*[linewidth=0](-2,-1){.07}{P01}
\cnode*[linewidth=0](-2,0){.07}{P02}
\cnode*[linewidth=0](-2,1){.07}{P03}
\cnode*[linewidth=0](-1,-1){.07}{P11}
\cnode*[linewidth=0](-1,0){.07}{P12}
\cnode*[linewidth=0](-1,1){.07}{P13}
\cnode*[linewidth=0](0,0){.07}{P22}
\cnode*[linewidth=0](0,1){.07}{P23}
\cnode*[linewidth=0](1,1){.07}{P33}
%
\ncline[linestyle=solid,dotsep=.08]{->}{P00}{P01}
\ncline[linestyle=solid,dotsep=.08]{->}{P11}{P12}
\ncline[linestyle=solid,dotsep=.08]{->}{P22}{P23}
\ncline[linestyle=solid,dotsep=.08]{->}{P01}{P02}
\ncline[linestyle=solid,dotsep=.08]{->}{P02}{P03}
\ncline[linestyle=solid,dotsep=.08]{->}{P12}{P13}
\ncline[linestyle=solid,dotsep=.08]{->}{P01}{P11}
\ncline[linestyle=solid,dotsep=.08]{->}{P02}{P12}
\ncline[linestyle=solid,dotsep=.08]{->}{P03}{P13}
\ncline[linestyle=solid,dotsep=.08]{->}{P12}{P22}
\ncline[linestyle=solid,dotsep=.08]{->}{P13}{P23}
\ncline[linestyle=solid,dotsep=.08]{->}{P14}{P24}
\ncline[linestyle=solid,dotsep=.08]{->}{P23}{P33}
\ncline[linestyle=solid,dotsep=.08]{->}{P24}{P34}
\ChrPath[myred](-2,-1.95)(-2,-1)(-1,-1)(-1,1)(0.95,1)
\end{pspicture}
\hskip1.0em
\begin{pspicture}(1.15,1.15)(-2.15,-1.85)
%
\psline[linewidth=0.02,linecolor=lightgray,linestyle=dashed]{-}(-2,-2)(1,1)
\cnode*[linewidth=0](-2,-2){.07}{P00}
\cnode*[linewidth=0](-2,-1){.07}{P01}
\cnode*[linewidth=0](-2,0){.07}{P02}
\cnode*[linewidth=0](-2,1){.07}{P03}
\cnode*[linewidth=0](-1,-1){.07}{P11}
\cnode*[linewidth=0](-1,0){.07}{P12}
\cnode*[linewidth=0](-1,1){.07}{P13}
\cnode*[linewidth=0](0,0){.07}{P22}
\cnode*[linewidth=0](0,1){.07}{P23}
\cnode*[linewidth=0](1,1){.07}{P33}
%
\ncline[linestyle=solid,dotsep=.08]{->}{P00}{P01}
\ncline[linestyle=solid,dotsep=.08]{->}{P11}{P12}
\ncline[linestyle=solid,dotsep=.08]{->}{P22}{P23}
\ncline[linestyle=solid,dotsep=.08]{->}{P01}{P02}
\ncline[linestyle=solid,dotsep=.08]{->}{P02}{P03}
\ncline[linestyle=solid,dotsep=.08]{->}{P12}{P13}
\ncline[linestyle=solid,dotsep=.08]{->}{P01}{P11}
\ncline[linestyle=solid,dotsep=.08]{->}{P02}{P12}
\ncline[linestyle=solid,dotsep=.08]{->}{P03}{P13}
\ncline[linestyle=solid,dotsep=.08]{->}{P12}{P22}
\ncline[linestyle=solid,dotsep=.08]{->}{P13}{P23}
\ncline[linestyle=solid,dotsep=.08]{->}{P14}{P24}
\ncline[linestyle=solid,dotsep=.08]{->}{P23}{P33}
\ncline[linestyle=solid,dotsep=.08]{->}{P24}{P34}
\ChrPath[myred](-2,-1.95)(-2,-1)(-1,-1)(-1,0)(0,0)(0,1)(0.95,1)
\end{pspicture}
\caption{There are five Dyck paths in $T_3$, so $\c_3=5$.}
\label{fig: catalan}
\end{figure}
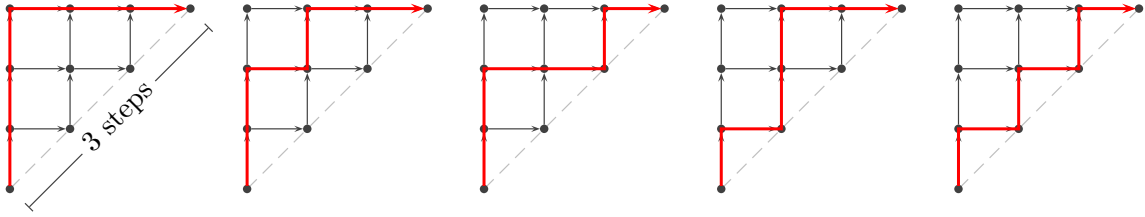

Let $G_n = T_{2n-1}$, where we arrange $n$ origin and $n$ destination vertices along the diagonal, as done in Section \ref{Middle binomial path counting}, it is plain to see that $A(G_n)$ is populated with Catalan numbers. Specifically, $A_{ij} = \c_{i+j}$. That is, $A(G_n) = \C(n)$. There is precisely one non-intersecting $n$-path with this configuration. See Figure \ref{fig: non-intersecting catalan-a}. Appealing to Theorem \ref{thm: path counting}, we conclude that $\det\C(n) = 1$.

\begin{remark}This result has been proven many times over. The path counting proof was given at least as early as \cite{Vien}, though it also appears in \cite{BCQY,MW}. A proof resting on the Cholesky decomposition of $\C$ has also been discovered many times. See \cite{Aig,Rad,Shap}. The latter approach is useful for determining an explicit description of $\C^{-1}$, which we do in Section \ref{inverting_M}. 
\end{remark} 

The fact that $\det\sC(n) = 1$ is now easy to see. Let $G'_n=T_{2n}$ where we again arrange the $n$ origin and destination vertices along the diagonal as in Section \ref{Middle binomial path counting}. Even though $o_0$ and $d_0$ no longer overlap, there is still no room for exotic non-intersecting $n$-paths. See Figure \ref{fig: non-intersecting catalan-b}. 
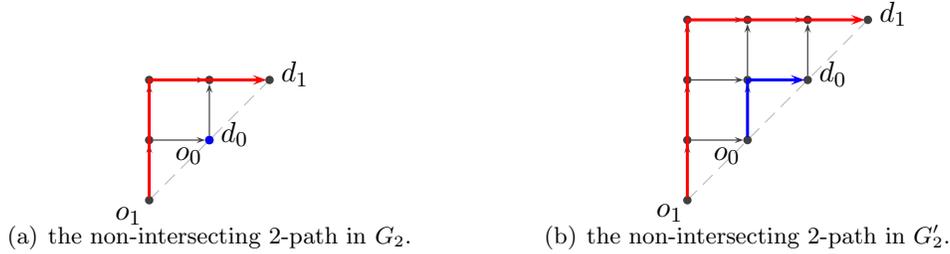
\begin{figure}[!hbt]
\subfigure[the non-intersecting $2$-path in $G_2$.]{%
\psset{unit=0.8, linewidth=.02,linecolor=darkgray,nodesep=0}
\begin{pspicture}(4.15,1.15)(-4.15,-1.15)
%
\psline[linewidth=0.02,linecolor=lightgray,linestyle=dashed]{-}(-1,-1)(1,1)
\cnode*[linewidth=0](-1,-1){.07}{P11}
\cnode*[linewidth=0](-1,0){.07}{P12}
\cnode*[linewidth=0](-1,1){.07}{P13}
\cnode*[linewidth=0](0,0){.07}{P22}
\cnode*[linewidth=0](0,1){.07}{P23}
\cnode*[linewidth=0](1,1){.07}{P33}
%
\ncline[linestyle=solid,dotsep=.08]{->}{P11}{P12}
\ncline[linestyle=solid,dotsep=.08]{->}{P22}{P23}
\ncline[linestyle=solid,dotsep=.08]{->}{P12}{P13}
\ncline[linestyle=solid,dotsep=.08]{->}{P12}{P22}
\ncline[linestyle=solid,dotsep=.08]{->}{P13}{P23}
\ncline[linestyle=solid,dotsep=.08]{->}{P14}{P24}
\ncline[linestyle=solid,dotsep=.08]{->}{P23}{P33}
\ncline[linestyle=solid,dotsep=.08]{->}{P24}{P34}
\uput{0.17}[225](0,0){$o_0$}
\uput{0.17}[225](-1,-1){$o_1$}
\uput{0.19}[15](0,0){$d_0$}
\uput{0.19}[15](1,1){$d_1$}
\pscircle*[linewidth=0.06,linecolor=myblue](0,0){.07}
\ChrPath[myred](-1,-0.95)(-1,1)(0.95,1)
\end{pspicture}\label{fig: non-intersecting catalan-a}}
\hskip1.0em
\subfigure[the non-intersecting $2$-path in $G'_2$.]{%
\psset{unit=0.8, linewidth=.02,linecolor=darkgray,nodesep=0}
\begin{pspicture}(3.15,1.15)(-5.15,-2.15)
%
\psline[linewidth=0.02,linecolor=lightgray,linestyle=dashed]{-}(-2,-2)(1,1)
\cnode*[linewidth=0](-2,-2){.07}{P00}
\cnode*[linewidth=0](-2,-1){.07}{P01}
\cnode*[linewidth=0](-2,0){.07}{P02}
\cnode*[linewidth=0](-2,1){.07}{P03}
\cnode*[linewidth=0](-1,-1){.07}{P11}
\cnode*[linewidth=0](-1,0){.07}{P12}
\cnode*[linewidth=0](-1,1){.07}{P13}
\cnode*[linewidth=0](0,0){.07}{P22}
\cnode*[linewidth=0](0,1){.07}{P23}
\cnode*[linewidth=0](1,1){.07}{P33}
%
\ncline[linestyle=solid,dotsep=.08]{->}{P00}{P01}
\ncline[linestyle=solid,dotsep=.08]{->}{P11}{P12}
\ncline[linestyle=solid,dotsep=.08]{->}{P22}{P23}
\ncline[linestyle=solid,dotsep=.08]{->}{P01}{P02}
\ncline[linestyle=solid,dotsep=.08]{->}{P02}{P03}
\ncline[linestyle=solid,dotsep=.08]{->}{P12}{P13}
\ncline[linestyle=solid,dotsep=.08]{->}{P01}{P11}
\ncline[linestyle=solid,dotsep=.08]{->}{P02}{P12}
\ncline[linestyle=solid,dotsep=.08]{->}{P03}{P13}
\ncline[linestyle=solid,dotsep=.08]{->}{P12}{P22}
\ncline[linestyle=solid,dotsep=.08]{->}{P13}{P23}
\ncline[linestyle=solid,dotsep=.08]{->}{P14}{P24}
\ncline[linestyle=solid,dotsep=.08]{->}{P23}{P33}
\ncline[linestyle=solid,dotsep=.08]{->}{P24}{P34}
\uput{0.17}[225](-1,-1){$o_0$}
\uput{0.10}[225](-2,-2){$o_1$}
\uput{0.19}[15](0,0){$d_0$}
\uput{0.19}[15](1,1){$d_1$}
\ChrPath[myblue](-1,-0.95)(-1,0)(-0.05,0)
\ChrPath[myred](-2,-1.95)(-2,1)(0.95,1)
\end{pspicture}\label{fig: non-intersecting catalan-b}}
\caption{Catalan $2$-paths.}
\label{fig: catalan 2-paths}
\end{figure}

%
%
\subsection{Path counting corollaries}\label{Path counting corollaries}
The path counting approach can be used for more than computing determinants. (Though the complexity of the applications below is certainly sub-optimal.)

\subsubsection{Inverse of $\M$}\label{sec: cramer}
In principle, the path counting approach can also be used to give an explicit description of $\M^{-1}$. 
(As remarked earlier, we follow an alternate approach in Section \ref{inverting_M}, debarking from the Cholesky decompositions of $\B,\sB,\C,\sC$.)

From \eqref{eq: tilde M} and the factorization $\tilde \M = P\, D_1\, \M\, D_2\, P^T$, the only difficulty in computing the inverse of $\M$ arises when computing the inverses of $\B,\sB,\C,\sC$. These may be computed using the path counting method by appealing to Cramer's Rule. Consider $\C=\C(n)$. 
\[
	\left(\C^{-1}\right)_{j,k} = (-1)^{j+k}\frac{\det \C^{k,j}}{\det \C},
\]
where $\C^{k,j}$ is the $\C$ with row $k$ and column $j$ deleted. The entries of $\C^{k,j}$ count paths in the same graph that governs $\C$. The new wrinkle is that one of the origins and destinations have been deleted, opening up the possibility for more non-intersecting paths. In each case however ($\B, \sB, \C, \sC$), the modified graph with $n-1$ origins and destinations is still non-permutable, so Theorem \ref{thm: path counting} applies. See Figure \ref{fig: inverting C} for two examples when $n=3$.
%
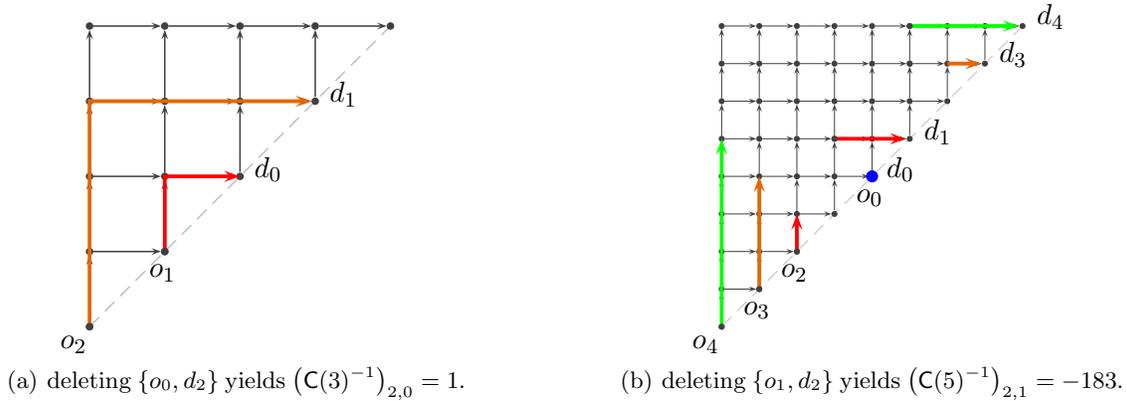
\begin{figure}[!hbt]
\subfigure[deleting $\{o_0,d_2\}$ yields $\left(\C(3)^{-1}\right)_{2,0}=1$.]{%
\psset{unit=1.0, linewidth=.02,linecolor=darkgray,nodesep=0}
\begin{pspicture}(3.0,2.15)(-3.1,-2.35)
%
\psline[linewidth=0.02,linecolor=lightgray,linestyle=dashed]{-}(-2,-2)(2,2)
\cnode*[linewidth=0](-2,-2){.05}{P00}
\cnode*[linewidth=0](-2,-1){.05}{P01}
\cnode*[linewidth=0](-2,0){.05}{P02}
\cnode*[linewidth=0](-2,1){.05}{P03}
\cnode*[linewidth=0](-2,2){.05}{P04}
\cnode*[linewidth=0](-1,-1){.05}{P11}
\cnode*[linewidth=0](-1,0){.05}{P12}
\cnode*[linewidth=0](-1,1){.05}{P13}
\cnode*[linewidth=0](-1,2){.05}{P14}
\cnode*[linewidth=0](0,0){.05}{P22}
\cnode*[linewidth=0](0,1){.05}{P23}
\cnode*[linewidth=0](0,2){.05}{P24}
\cnode*[linewidth=0](1,1){.05}{P33}
\cnode*[linewidth=0](1,2){.05}{P34}
\cnode*[linewidth=0](2,2){.05}{P44}
%
\ncline[linestyle=solid,dotsep=.08,nodesepA=0.01]{->}{P00}{P01}
\ncline[linestyle=solid,dotsep=.08,nodesepA=0.01]{->}{P11}{P12}
\ncline[linestyle=solid,dotsep=.08,nodesepA=0.01]{->}{P22}{P23}
\ncline[linestyle=solid,dotsep=.08,nodesepA=0.01]{->}{P33}{P34}
%
%
\ncline[linestyle=solid,dotsep=.08]{->}{P01}{P02}
\ncline[linestyle=solid,dotsep=.08]{->}{P02}{P03}
\ncline[linestyle=solid,dotsep=.08]{->}{P03}{P04}
\ncline[linestyle=solid,dotsep=.08]{->}{P12}{P13}
\ncline[linestyle=solid,dotsep=.08]{->}{P13}{P14}
\ncline[linestyle=solid,dotsep=.08]{->}{P23}{P24}
\ncline[linestyle=solid,dotsep=.08]{->}{P01}{P11}
\ncline[linestyle=solid,dotsep=.08]{->}{P02}{P12}
\ncline[linestyle=solid,dotsep=.08]{->}{P03}{P13}
\ncline[linestyle=solid,dotsep=.08]{->}{P04}{P14}
\ncline[linestyle=solid,dotsep=.08]{->}{P12}{P22}
\ncline[linestyle=solid,dotsep=.08]{->}{P13}{P23}
\ncline[linestyle=solid,dotsep=.08]{->}{P14}{P24}
\ncline[linestyle=solid,dotsep=.08]{->}{P23}{P33}
\ncline[linestyle=solid,dotsep=.08]{->}{P24}{P34}
\ncline[linestyle=solid,dotsep=.08]{->}{P34}{P44}
\uput{0.17}[265](-1,-1){$o_1$}
\uput{0.13}[235](-2,-2){$o_2$}
\uput{0.19}[15](0,0){$d_0$}
\uput{0.19}[15](1,1){$d_1$}
\ChrPath[myred](-1,-1)(-1,0)(0,0)
\ChrPath[myorange](-2,-2)(-2,-1)(-2,1)(0,1)(0.95,1)
\pscircle*[linewidth=0.05,linecolor=darkgray](-2,-2){.05}
\pscircle*[linewidth=0.05,linecolor=darkgray](-1,-1){.05}
\end{pspicture}\label{fig: inverting C-a}}
\hskip5.0em
\subfigure[deleting $\{o_1,d_2\}$ yields $\left(\C(5)^{-1}\right)_{2,1}=-183$.]{%
\psset{unit=1.0, linewidth=.01,linecolor=darkgray,nodesep=0}
\begin{pspicture}(3.35,2.15)(-3.35,-2.35)
%
\psline[linewidth=0.01,linecolor=lightgray,linestyle=dashed]{-}(-2,-2)(2,2)
\cnode*[linewidth=0](-2,-2){.04}{P00}
\cnode*[linewidth=0](-2,-1.5){.04}{P01}
\cnode*[linewidth=0](-2,-1){.04}{P02}
\cnode*[linewidth=0](-2,-.5){.04}{P03}
\cnode*[linewidth=0](-2,0){.04}{P04}
\cnode*[linewidth=0](-2,.5){.04}{P05}
\cnode*[linewidth=0](-2,1){.04}{P06}
\cnode*[linewidth=0](-2,1.5){.04}{P07}
\cnode*[linewidth=0](-2,2){.04}{P08}
\cnode*[linewidth=0](-1.5,-1.5){.04}{P11}
\cnode*[linewidth=0](-1.5,-1){.04}{P12}
\cnode*[linewidth=0](-1.5,-.5){.04}{P13}
\cnode*[linewidth=0](-1.5,0){.04}{P14}
\cnode*[linewidth=0](-1.5,.5){.04}{P15}
\cnode*[linewidth=0](-1.5,1){.04}{P16}
\cnode*[linewidth=0](-1.5,1.5){.04}{P17}
\cnode*[linewidth=0](-1.5,2){.04}{P18}
\cnode*[linewidth=0](-1,-1){.04}{P22}
\cnode*[linewidth=0](-1,-.5){.04}{P23}
\cnode*[linewidth=0](-1,0){.04}{P24}
\cnode*[linewidth=0](-1,.5){.04}{P25}
\cnode*[linewidth=0](-1,1){.04}{P26}
\cnode*[linewidth=0](-1,1.5){.04}{P27}
\cnode*[linewidth=0](-1,2){.04}{P28}
\cnode*[linewidth=0](-.5,-.5){.04}{P33}
\cnode*[linewidth=0](-.5,0){.04}{P34}
\cnode*[linewidth=0](-.5,.5){.04}{P35}
\cnode*[linewidth=0](-.5,1){.04}{P36}
\cnode*[linewidth=0](-.5,1.5){.04}{P37}
\cnode*[linewidth=0](-.5,2){.04}{P38}
\cnode*[linewidth=0](0,0){.04}{P44}
\cnode*[linewidth=0](0,.5){.04}{P45}
\cnode*[linewidth=0](0,1){.04}{P46}
\cnode*[linewidth=0](0,1.5){.04}{P47}
\cnode*[linewidth=0](0,2){.04}{P48}
\cnode*[linewidth=0](.5,.5){.04}{P55}
\cnode*[linewidth=0](.5,1){.04}{P56}
\cnode*[linewidth=0](.5,1.5){.04}{P57}
\cnode*[linewidth=0](.5,2){.04}{P58}
\cnode*[linewidth=0](1,1){.04}{P66}
\cnode*[linewidth=0](1,1.5){.04}{P67}
\cnode*[linewidth=0](1,2){.04}{P68}
\cnode*[linewidth=0](1.5,1.5){.04}{P77}
\cnode*[linewidth=0](1.5,2){.04}{P78}
\cnode*[linewidth=0](2,2){.04}{P88}
\ncline[linestyle=solid,dotsep=.08,nodesepA=0.01]{->}{P00}{P01}
\ncline[linestyle=solid,dotsep=.08]{->}{P01}{P02}
\ncline[linestyle=solid,dotsep=.08]{->}{P02}{P03}
\ncline[linestyle=solid,dotsep=.08]{->}{P03}{P04}
\ncline[linestyle=solid,dotsep=.08]{->}{P04}{P05}
\ncline[linestyle=solid,dotsep=.08]{->}{P05}{P06}
\ncline[linestyle=solid,dotsep=.08]{->}{P06}{P07}
\ncline[linestyle=solid,dotsep=.08]{->}{P07}{P08}
\ncline[linestyle=solid,dotsep=.08,nodesepA=0.01]{->}{P11}{P12}
\ncline[linestyle=solid,dotsep=.08]{->}{P12}{P13}
\ncline[linestyle=solid,dotsep=.08]{->}{P13}{P14}
\ncline[linestyle=solid,dotsep=.08]{->}{P14}{P15}
\ncline[linestyle=solid,dotsep=.08]{->}{P15}{P16}
\ncline[linestyle=solid,dotsep=.08]{->}{P16}{P17}
\ncline[linestyle=solid,dotsep=.08]{->}{P17}{P18}
\ncline[linestyle=solid,dotsep=.08,nodesepA=0.01]{->}{P22}{P23}
\ncline[linestyle=solid,dotsep=.08]{->}{P23}{P24}
\ncline[linestyle=solid,dotsep=.08]{->}{P24}{P25}
\ncline[linestyle=solid,dotsep=.08]{->}{P25}{P26}
\ncline[linestyle=solid,dotsep=.08]{->}{P26}{P27}
\ncline[linestyle=solid,dotsep=.08]{->}{P27}{P28}
\ncline[linestyle=solid,dotsep=.08,nodesepA=0.01]{->}{P33}{P34}
\ncline[linestyle=solid,dotsep=.08]{->}{P34}{P35}
\ncline[linestyle=solid,dotsep=.08]{->}{P35}{P36}
\ncline[linestyle=solid,dotsep=.08]{->}{P36}{P37}
\ncline[linestyle=solid,dotsep=.08]{->}{P37}{P38}
\ncline[linestyle=solid,dotsep=.08,nodesepA=0.01]{->}{P44}{P45}
\ncline[linestyle=solid,dotsep=.08]{->}{P45}{P46}
\ncline[linestyle=solid,dotsep=.08]{->}{P46}{P47}
\ncline[linestyle=solid,dotsep=.08]{->}{P47}{P48}
\ncline[linestyle=solid,dotsep=.08,nodesepA=0.01]{->}{P55}{P56}
\ncline[linestyle=solid,dotsep=.08]{->}{P56}{P57}
\ncline[linestyle=solid,dotsep=.08]{->}{P57}{P58}
\ncline[linestyle=solid,dotsep=.08,nodesepA=0.01]{->}{P66}{P67}
\ncline[linestyle=solid,dotsep=.08]{->}{P67}{P68}
\ncline[linestyle=solid,dotsep=.08,nodesepA=0.01]{->}{P77}{P78}
\ncline[linestyle=solid,dotsep=.08]{->}{P01}{P11}
\ncline[linestyle=solid,dotsep=.08]{->}{P02}{P12}
\ncline[linestyle=solid,dotsep=.08]{->}{P03}{P13}
\ncline[linestyle=solid,dotsep=.08]{->}{P04}{P14}
\ncline[linestyle=solid,dotsep=.08]{->}{P05}{P15}
\ncline[linestyle=solid,dotsep=.08]{->}{P06}{P16}
\ncline[linestyle=solid,dotsep=.08]{->}{P07}{P17}
\ncline[linestyle=solid,dotsep=.08]{->}{P08}{P18}
\ncline[linestyle=solid,dotsep=.08]{->}{P12}{P22}
\ncline[linestyle=solid,dotsep=.08]{->}{P13}{P23}
\ncline[linestyle=solid,dotsep=.08]{->}{P14}{P24}
\ncline[linestyle=solid,dotsep=.08]{->}{P15}{P25}
\ncline[linestyle=solid,dotsep=.08]{->}{P16}{P26}
\ncline[linestyle=solid,dotsep=.08]{->}{P17}{P27}
\ncline[linestyle=solid,dotsep=.08]{->}{P18}{P28}
\ncline[linestyle=solid,dotsep=.08]{->}{P23}{P33}
\ncline[linestyle=solid,dotsep=.08]{->}{P24}{P34}
\ncline[linestyle=solid,dotsep=.08]{->}{P25}{P35}
\ncline[linestyle=solid,dotsep=.08]{->}{P26}{P36}
\ncline[linestyle=solid,dotsep=.08]{->}{P27}{P37}
\ncline[linestyle=solid,dotsep=.08]{->}{P28}{P38}
\ncline[linestyle=solid,dotsep=.08]{->}{P34}{P44}
\ncline[linestyle=solid,dotsep=.08]{->}{P35}{P45}
\ncline[linestyle=solid,dotsep=.08]{->}{P36}{P46}
\ncline[linestyle=solid,dotsep=.08]{->}{P37}{P47}
\ncline[linestyle=solid,dotsep=.08]{->}{P38}{P48}
\ncline[linestyle=solid,dotsep=.08]{->}{P45}{P55}
\ncline[linestyle=solid,dotsep=.08]{->}{P46}{P56}
\ncline[linestyle=solid,dotsep=.08]{->}{P47}{P57}
\ncline[linestyle=solid,dotsep=.08]{->}{P48}{P58}
\ncline[linestyle=solid,dotsep=.08]{->}{P56}{P66}
\ncline[linestyle=solid,dotsep=.08]{->}{P57}{P67}
\ncline[linestyle=solid,dotsep=.08]{->}{P58}{P68}
\ncline[linestyle=solid,dotsep=.08]{->}{P67}{P77}
\ncline[linestyle=solid,dotsep=.08]{->}{P68}{P78}
\ncline[linestyle=solid,dotsep=.08]{->}{P78}{P88}
\uput{0.17}[265](0,0){$o_0$}
\uput{0.17}[265](-1,-1){$o_2$}
\uput{0.17}[265](-1.5,-1.5){$o_3$}
\uput{0.13}[235](-2,-2){$o_4$}
\uput{0.19}[15](0,0){$d_0$}
\uput{0.19}[15](.5,.5){$d_1$}
\uput{0.19}[20](1.5,1.5){$d_3$}
\uput{0.19}[20](2,2){$d_4$}
\pscircle*[linewidth=0.05,linecolor=myblue](0,0){.08}
\ChrPath[myred](-1,-1)(-1,-.5)  \ChrPath[myred](-.5,.5)(.45,.5)
\ChrPath[myorange](-1.5,-1.5)(-1.5,0)  \ChrPath[myorange](1,1.5)(1.45,1.5)
\ChrPath[mygreen](-2,-2)(-2,.5)  \ChrPath[mygreen](.5,2)(1.95,2)
\pscircle*[linewidth=0.05,linecolor=darkgray](-2,-2){.04}
\pscircle*[linewidth=0.05,linecolor=darkgray](-1.5,-1.5){.04}
\pscircle*[linewidth=0.05,linecolor=darkgray](-1,-1){.04}
\pscircle*[linewidth=0.05,linecolor=darkgray](-.5,.5){.04}
\pscircle*[linewidth=0.05,linecolor=darkgray](.5,1.5){.04}
\pscircle*[linewidth=0.05,linecolor=darkgray](.5,2){.04}
\end{pspicture}\label{fig: inverting C-b}}
\caption{Inverting $\C$.}\label{fig: inverting C}
\end{figure}

Unfortunately, the combinatorics involved in enumerating the non-intersecting paths in this new situation can be substantially more difficult. (In Figure \ref{fig: inverting C-b}, we draw the beginning and ending portions of the paths that are forced. The reader may verify that there are 183 ways to connect the paths.) A direct and explicit computation of the non-intersecting paths in such graphs---even in the well-studied case of Dyck paths---remains elusive. (The authors in \cite{BCQY,MW} essentially give up after considering the cases $j,k\in \{0,1\}$.) See \cite[Chap\^itre 4]{Vien} and \cite{SCV,GV} for further steps in this direction. 

\subsubsection{$LDU$ factorization of $\M$}
We recount a seldom used fact from matrix theory. Suppose an invertible $n\times n$ matrix $A=(a_{ij})$ has a factorization $A=LDU$, with $L$ and $U$ lower- and upper-triangular, respectively, with $1$s along the diagonal, and $D$ a diagonal matrix. Given tuples of indices $\alpha,\beta\subseteq \{0,1,\dotsc,n{-}1\}$, let $A_{\alpha,\beta} = (a_{ij})_{i\in\alpha,j\in\beta}$. Also, let $[k] := (0,1,\dotsc,k)$ and $[k]+j := (0,\dotsc,k,j)$. Then we have
\[
	L_{ij} = \frac{\det A_{[j-1]+i,[j]}}{\det A_{[j],[j]}},
	\quad
	D_{jj} = \frac{\det A_{[j],[j]}}{\det A_{[j-1],[j-1]}}
	\qand
	U_{jk} = \frac{\det A_{[j],[j-1]+k}}{\det A_{[j],[j]}}. 
\]
That is, the entries of $L, D$ and $U$ are ratios of minors. So if these again correspond to non-permutable directed graphs, Theorem  \ref{thm: path counting} applies.
This is certainly the case for our $\B,\sB,\C,\sC$. In contrast to Paragraph \ref{sec: cramer}, the resulting counting problems are even manageable here. 

\subsubsection{Totally positive matrices}\label{sec: tpm}
(A prelude to Section \ref{sec: concluding}.) 
There is a generalization of Theorem \ref{thm: path counting} to graphs with edges labeled by nonnegative real numbers, where paths are counted with \demph{weights} (the product of the edge labels on the path).\footnote{In fact, this more general result follows from the proof of Theorem \ref{thm: path counting} sketched above.} A fact dating back to \cite{Bre} has come into fashion recently, with the advent of cluster theory \cite{FomZel}. 

\begin{theorem}\label{thm: TPM} 
An $n\times n$ matrix $A=(a_{ij})$ is {totally nonnegative} if and only if there exists a planar, edge-labeled, directed, acyclic graph $G$, with fully non-permutable choice of $n$ origins and destinations, so that $a_{ij}$ equals the weighted sum of paths from $i$ to $j$ in $G$.
\end{theorem}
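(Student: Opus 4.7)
The plan is to prove the two directions separately, since each relies on quite different machinery. The ``if'' direction is a direct corollary of the weighted Lindström--Gessel--Viennot lemma (the natural generalization of Theorem~\ref{thm: path counting}), while the ``only if'' direction requires invoking the classical structure theory of totally nonnegative matrices.

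For the ``if'' direction, suppose we are given such a planar edge-weighted graph $G$ with origin/destination data $\{o_i\},\{d_j\}$. To show that an arbitrary minor $\det A_{\alpha,\beta}$ (with $|\alpha|=|\beta|=k$) is nonnegative, I would apply the weighted version of Theorem~\ref{thm: path counting} to $G$ with the restricted source set $\{o_i:i\in\alpha\}$ and sink set $\{d_j:j\in\beta\}$. The sign-reversing involution sketched in the proof of Theorem~\ref{thm: path counting} carries over verbatim to the weighted setting and shows that $\det A_{\alpha,\beta}$ equals the signed weighted sum over non-intersecting $k$-path configurations. The hypothesis that the original $n$-configuration is fully non-permutable is designed precisely to descend to every sub-configuration: any non-intersecting $k$-path system must pair the $i$-th selected source with the $i$-th selected sink in order, yielding the identity permutation and a positive sign. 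Hence each minor is an honest sum of products of nonnegative edge weights, so $A$ is totally nonnegative.

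For the converse, I would appeal to the Loewner--Whitney--Cryer bidiagonal factorization theorem: every $n\times n$ totally nonnegative matrix $A$ admits a factorization as a product of elementary lower bidiagonal factors $I+t\,E_{i+1,i}$ with $t\geq 0$, a diagonal factor with nonnegative entries, and elementary upper bidiagonal factors $I+s\,E_{i,i+1}$ with $s\geq 0$. Each such elementary factor admits an obvious realization as a ``planar chip'' on $n$ horizontal wires: the bidiagonal factor contributes a single weighted crossing edge between adjacent wires, while the diagonal factor scales individual wires. Concatenating these chips left to right produces a planar, acyclic, edge-weighted directed graph $G$ with $n$ origins on the left and $n$ destinations on the right, whose weighted path-count matrix is exactly $A$. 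Full non-permutability is automatic from the construction: any non-intersecting path system connecting a linearly ordered set of sources on one side of a planar strip to a linearly ordered set of sinks on the opposite side must be ``parallel,'' and this remains true for any subset of the sources/sinks.

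The hard part is the ``only if'' direction---specifically, the existence of the bidiagonal factorization. This is a substantial classical theorem which I would simply quote rather than reproduce; see \cite{Bre} together with the modern exposition in \cite{FomZel}. A secondary subtlety is verifying that ``fully non-permutable'' truly descends to sub-configurations in the ``if'' direction, but in planar graphs whose sources and sinks lie in matched order along the boundary this is an immediate consequence of the Jordan curve theorem applied to any two crossing paths.
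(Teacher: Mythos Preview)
Your proposal is correct and matches the paper's treatment. The paper does not actually prove Theorem~\ref{thm: TPM} where it is stated; it is quoted as a known result going back to \cite{Bre}, with the ``if'' direction noted (in a footnote) to follow from the weighted version of the Lindstr\"om--Gessel--Viennot argument already sketched for Theorem~\ref{thm: path counting}. Later, in Section~\ref{sec: more path counting}, the paper illustrates the ``only if'' direction for $n=3$ via the standard planar-network construction and explicitly points to Whitney's bidiagonal factorization (citing \cite[Ch.~2]{FJ}) for the general totally nonnegative case---exactly the Loewner--Whitney--Cryer machinery you invoke. So your two directions line up precisely with what the paper indicates, and your identification of the bidiagonal factorization as the substantive ingredient is on target.
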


Recall that a matrix is \demph{totally positive} (\demph{nonnegative}) if all of its minors $\det A_{\alpha,\beta}$ are positve (nonnegative). (Here $\alpha,\beta$ are thought of as subsequences of $(0,1,\dotsc,n{-}1)$.) By \emph{fully non-permutable} we mean that for every subsquence $(i_1,i_2,\dotsc,i_r)$ of $(0,1,\ldots,n-1)$ (with $r\leq n$), the origin and destination choices $\{o_{i_1},\dotsc,o_{i_r}\}$ and $\{d_{i_1},\dotsc,d_{i_r}\}$ are non-permutable.

An immediate corollary is that our matrices $\B,\sB,\C,\sC$ are totally positive, and that $\tilde\M$ is totally nonnegative. (This is not the case for $D_1\M D_2$, however.)

\begin{remark} 
If $A$ is Hankel, then a result of 
\cite[Theorem 4.4]{Pinkus} says that for total positivity (nonnegativity), it is enough to check that $A$ and its lower-left $(n{-}1)\times (n{-}1)$ submatrix are both positive definite (semidefinite) matrices. This can be checked for our binomial and catalan matrices as well. We revisit this approach in Section \ref{sec: alternative boundary conditions}.
\end{remark}

%
%
\section{Boundary Layer Potential Solution}\label{Invertibility of the boundary value operator}
Consider the operator $\V$ as a product
$\V = A_1 \M A_2$, where 
$ \M$ is the matrix of Proposition \ref{scalar_symbol}, while
$A_1$ and $A_2$ are both multiplier operators on $\reals^{d-1}$, possessing symbols
$\left(\sigma(A_1) (\eta) \right)_{j,k} =  (-1)^{(k-m)/2}|\eta|^{k-m}\delta_{j,k}$ and
$\left(\sigma(A_2) (\eta) \right)_{j,k} = (-1)^{(j-m)/2}|\eta|^{1+j-m}\delta_{j,k}$. 
 (Here $\delta_{j,k}$ is the Kronecker delta.) 
 Since $A_1$, $A_2$ have diagonal symbols, they are easily inverted,
and the inverse of each is a ``decoupled'' operator.
To illustrate this, we consider, respectively, the $j$th entries of 
$A_1^{-1}\mathbf{f}$ and $A_2^{-1}\mathbf{f}$, where $\mathbf{f} = (f_0,\dots,f_{m-1})$
is a vector of test functions:
\begin{align*}
 \left(A_1^{-1} \mathbf{f}
 \right)_k &= 
 \begin{cases}
(\Deltad)^{n} f_k, & \text{for }2n={m-k},
\\
-i\sqrt{-\Deltad} \,(\Deltad)^{n} f_k, & \text{for }2n + 1 = {m-k}
 \end{cases}\\
&\text{and}\\
 \left(A_2^{-1} \mathbf{f}
 \right)_j &= 
 \begin{cases}
i(\Deltad)^{n} f_j, & \text{for }2n+1={m-j},
\\
\sqrt{-\Deltad}\, (\Deltad)^{n} f_j, & \text{for }2n = {m-j}.
 \end{cases}
\end{align*}
In the `odd' cases, we have employed the operator $ \sqrt{-\Deltad}=  \mathbf{R} \dotp \nabla$, where
 $\mathbf{R} = \nabla (-\Deltad)^{-1/2} =(\Riesz_\ell)_{\ell=1}^{d-1}$ is  the ensemble of Riesz transforms, $\Riesz_{\ell}$,
 defined via the Fourier symbol as 
  $\widehat{\Riesz_{\ell} f}(\eta) = i \frac{\eta_{\ell}}{|\eta|}\widehat{f}(\eta)$.

It follows that as a (formal) solution of the system of boundary integral
equations, we should take
$\mathbf{g} = A_2^{-1} \M^{-1} A_1^{-1}\mathbf{h}$. 
In the previous section it was observed that $\M^{-1}$ exists and has the same checkerboard
structure as $\M$. It follows that the solution $\bfg$ only involves  pairs of  indices $j,k$
between $0\le j,k\le m-1$ for which both are odd or even. Consequently, the
operator $\sqrt{-\Deltad}$ is a common factor throughout the solution.

In other words, we have auxiliary functions of the form
\begin{equation}\label{Bdry_Function}
g_j := 
\sqrt{-\Deltad} \sum_{\substack{k=0\\ k+j \text{ is even}}}^{m-1} (\M^{-1})_{j,k} \Deltad^{m-1-(j+k)/2} h_k.
\end{equation}

We note that functions $g_j$ are well-defined for boundary data $\mathbf{h}$ 
satisfying $h_k\in C^{2m-1-k+\epsilon}(\reals^{d-1})$. Indeed, each $g_j\in C^{j}(\reals^{d-1})$.
 What is perhaps less obvious is that one can ensure decay of the functions $g_j$ by imposing
 decay and moment conditions on the functions $\bfh$.
By assuming, roughly, that $h_k\in \Space_{k,4}^{2m-1-k+\epsilon}$ and 
$h_k\perp \Pi_{k+1-d}$, we have $g_j \in  \Space_{2m-j-1,2}^{j}$ and $g_j\perp \Pi_{2m -j-d}$.
This brings us to our main theorem.

%

\begin{theorem} \label{main}
Let the functions $\mathbf{h} = (h_k)$ satisfy the following three conditions:
\begin{enumerate}
\item there is $\epsilon>0$ so that each $h_k\in C^{2m-k-1+\epsilon}(\reals^{d-1})$;
\item there  are constants $\delta>4$ and $C>0$ so that for 
each $k =0,\dots, m-1$,   and for all $s\in[0, 2m-k-1+\epsilon]$
and $\rho>0$, 
\[
	|h_k|_{C^{s}(\reals^d\setminus B(0,\rho))} \le C (1+\rho)^{-(k+s)} \bigl(\log (e+ \rho)\bigr)^{-\delta}\text{;}
\]
\item for all $0\leq j<\min(m,2m-d)$,
\begin{equation}\label{weak_conditions}
\left(
	\sum_{k=0}^{m-1} 
		(\M^{-1})_{j,k} (\Deltad)^{m-1-(j+k)/2} h_k
\right)
\perp  
\Pi_{2m -j-d-1}(\reals^{d-1}).
\end{equation}
\end{enumerate}
If $\bfg = (g_j)$ is defined as in \eqref{Bdry_Function}, then the function $u=T\mathbf{g}=\sum_{j=0}^{m-1} V_j g_j$ as defined in \eqref{3:BhRep} 
is the solution of the polyharmonic Dirichlet problem \eqref{Dirichlet}.
\end{theorem}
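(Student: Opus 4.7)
The plan has two phases. First, I would verify that the auxiliary functions $\bfg = (g_j)$ defined by \eqref{Bdry_Function} satisfy Conditions \ref{firstDecay}, \ref{fourierMoments}, and \ref{fourierSmoothness}. Second, I would combine Lemma \ref{boundary_values} (polyharmonicity and boundary regularity of $T\bfg$) with Lemma \ref{Poisson_symbol} and Proposition \ref{scalar_symbol} to identify $\V$ as a multiplier with factored symbol $\sigma(A_1)\,\M\,\sigma(A_2)$; invertibility of $\M$ from Proposition \ref{det_matrix_symbol} then yields $\V\bfg = \bfh$.

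For the first phase, the smoothness of $g_j$ is immediate: the operator $\Deltad^{m-1-(j+k)/2}$ has order $2m{-}j{-}k{-}2$, so applied to $h_k \in C^{2m-k-1+\epsilon}$ it produces a function in $C^{j-k+1+\epsilon}$, and the Riesz-type factor $\sqrt{-\Deltad}$ costs at most one order of regularity, leaving each $g_j \in C^j(\reals^{d-1})$. The decay is more delicate. Hypothesis (ii), transported through the differential operators, places the inner sum $\Phi_j := \sum_k (\M^{-1})_{j,k}\,\Deltad^{m-1-(j+k)/2}h_k$ in $\Space_{2m-j-2,\delta}$, while hypothesis (iii) provides the vanishing moments $\Phi_j \perp \Pi_{2m-j-d-1}(\reals^{d-1})$. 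A naive application of $\sqrt{-\Deltad} = \mathbf{R}\dotp\nabla$ would cost one power of decay, but the Riesz-transform decay estimate of Section \ref{Decay of Riesz transforms} exploits precisely these vanishing moments to compensate, placing $g_j = \sqrt{-\Deltad}\,\Phi_j$ in $\Space_{2m-j-1,\delta-2}$ and so fulfilling Condition \ref{firstDecay}. Condition \ref{fourierMoments} is the Fourier-side translation of the moments of $\Phi_j$ (which forces $\widehat{\Phi_j}$ to vanish to order $2m{-}j{-}d$ at the origin, up to logarithms) together with the extra $|\xi'|$ factor contributed by $\sqrt{-\Deltad}$; Condition \ref{fourierSmoothness} follows from the combined smoothness and decay of $g_j$.

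With these conditions established, Lemma \ref{boundary_values} gives $u = T\bfg \in C^{m-1}(\reals^d)$ with $\Delta^m u = 0$ in $\reals^d_+$, and Lemma \ref{Poisson_symbol} together with \eqref{scalar_operators}--\eqref{scalar_symbol_first} yields the multiplier identity $\widehat{\V\bfg}(\xi') = \bigl(\sigma(A_1)(\xi')\,\M\,\sigma(A_2)(\xi')\bigr)\,\widehat{\bfg}(\xi')$. Plugging in the definition of $\bfg$---which, via the explicit Fourier formulas for $A_1^{-1}$ and $A_2^{-1}$ given just before the theorem, coincides with $A_2^{-1}\M^{-1}A_1^{-1}\bfh$---collapses this to $\widehat{\V\bfg} = \widehat{\bfh}$, and inverting the Fourier transform gives the desired pointwise boundary values. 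The common prefactor $\sqrt{-\Deltad}$ in \eqref{Bdry_Function} arises because every surviving entry of $\M^{-1}$ has $j{+}k$ even, forcing $m{-}j$ and $m{-}k$ to share parity and producing exactly one net odd power of $|\xi'|$ in $\sigma(A_2)^{-1}\M^{-1}\sigma(A_1)^{-1}$. The main obstacle is the decay step of the first phase: propagating algebraic decay through the Riesz transform is possible only because of the vanishing-moment hypothesis (iii), and this is where Section \ref{Decay of Riesz transforms} does the heavy lifting.
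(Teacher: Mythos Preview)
Your proposal is correct and follows essentially the same route as the paper: verify Conditions \ref{firstDecay}, \ref{fourierMoments}, \ref{fourierSmoothness} for $\bfg$ using the hypotheses together with the Riesz-transform decay estimate (Lemma \ref{RTE}), then invoke Lemma \ref{boundary_values} and the multiplier factorization $\sigma(\V)=\sigma(A_1)\,\M\,\sigma(A_2)$ to conclude $\V\bfg=\bfh$. One small slip: $\Deltad^{m-1-(j+k)/2}h_k$ lands in $C^{j+1+\epsilon}$, not $C^{j-k+1+\epsilon}$ (the $k$'s cancel), which is what you actually need for the Riesz step to leave $g_j\in C^j$; also, the paper organizes the decay argument around $F_{\ell,j,k}=\partial_{x_\ell}\Deltad^{m-1-(j+k)/2}h_k$ (so that the moment degree rises to $2m-j-d$ before applying Lemma \ref{RTE}), but this is only a bookkeeping difference from your $\Phi_j$ formulation.
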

\begin{proof} 
We proceed by demonstrating that the functions $g_0,\dots,g_{m-1}$ satisfy the three conditions introduced in Section \ref{Boundary Layer Potential Operators}.

\smallskip
\noindent{\em Condition \ref{firstDecay} (Decay):}  
We consider  each function $F_{\ell,j,k} := \frac{\partial}{\partial x_{\ell}} (\Deltad)^{m-1 -(j+k)/2} h_k$, because each $g_j $ is a linear combination of functions $\Riesz_{\ell} F_{\ell,j,k}$. 

Condition (1) ensures that each 
$ F_{\ell,j,k}\in C^{j+\epsilon}(\reals^{d-1})$.  
Condition (2) states that
$h_{k}\in \Space_{k, \delta}^{2m-k-1+\epsilon}$, which 
guarantees that $ F_{\ell,j,k}\in \Space_{2m-j-1, \delta}^{j+\epsilon}$.
Therefore,  for $|\alpha|\le j$, the estimate
$|D^{\alpha}
F_{\ell,j,k}(y)| \le C (1+|y|)^{1+j-2m-|\alpha|}\bigl( \log(e+|y|)\bigr)^{-\delta} $
holds. We note that, because $m>d/2$, each $F_{\ell,j,k}\in W_2^j(\reals^{d-1})$.

Because the Riesz transform from $C^{j+\epsilon}(\reals^{d-1})\cap W_2^j(\reals^{d-1})$ to  $C^{j}(\reals^{d-1})$ is bounded---this is demonstrated in \eqref{riesz_bound} below---we have that 
\[
	g_j=\sum_{\ell=1}^{d-1} \Riesz_{\ell} \sum_{k=0}^{m-1} (\M^{-1})_{j,k}  F_{\ell,j,k}
	\in C^j(\reals^{d-1}).
\]
  
Condition (3)
indicates that for fixed $j$ and $\ell$, the function
$\sum_{k=0}^{m-1} (\M^{-1})_{j,k} F_{\ell,j,k}$ annihilates polynomials of degree ${2m-j-d}$ (with $\Pi_{J}=\{0\}$ if $J<0$).
We are now in a position to apply Lemma \ref{RTE} (a basic result about decay of Riesz transforms---proved in Section \ref{Decay of Riesz transforms}), 
with $J = 2m-j -d$ and $N= d-1$.
We observe that for each $j$  and $|\alpha|\le j$,
\begin{equation}\label{decay_proof}
\left|D^{\alpha} g_j(y)\right|
\le 
\sum_{\ell=1}^{d-1}
\left|
    D^{\alpha}\Riesz_{\ell} 
      \sum_{k=0}^{m-1} 
        (\M^{-1})_{j,k} F_{\ell,j,k}(y)
\right|
\le C(1+|y|)^{j+1-2m-|\alpha|}\bigl(\log(e+|y|)\bigr)^{2-\delta}
\end{equation}
 holds 
and Condition  \ref{firstDecay} is satisfied.

At this point, we note that  Lemma \ref{boundary_values} applies;  $T\bfg$ is $m$-fold polyharmonic in $\reals^d_{+}$, and, moreover, is sufficiently smooth that the boundary values $\lambda_k T\bfg$ are well-defined for $k=0\dots m-1$.

\smallskip
\noindent{\em Condition \ref{fourierSmoothness} (Smoothness):} 
The decay of $g_j$ and its derivatives demonstrated in  \eqref{decay_proof} shows that 
$g_j\in W_2^j(\reals^{d-1})$.
Using Cauchy-Schwarz, we get the inequalities 
$$
\int_{\reals^{d-1}} |\widehat{g}_j(\eta)| (1+|\eta|)^{j-m} \dif \eta 
\le 
C
\left(
  \int_{\reals^{d-1}} 
    \left|
      \widehat{g}_j(\eta) (1+|\eta|)^{j-(m-d/2)} 
    \right|^2
  \dif \eta 
\right)^{1/2} 
\le 
C \|g_j\|_{W_2^{j}(\reals^{d-1})}.
$$
From this, Condition \ref{fourierSmoothness} follows.

\smallskip
\noindent{\em Condition \ref{fourierMoments} (Moments):}  Take $0\le j\le \min(m-1,2m-d)$.
If $J:=2m-d-j\ge0$, then the decay of $g_j$ demonstrated in \eqref{decay_proof} 
ensures that $g_j$ and $|\cdot|^{J} g_j $ are in $L_1(\reals^{d-1})$ 
and that  $\widehat{g_j}\in C^{J}(\reals^{d-1})$. 
The orthogonality condition \eqref{weak_conditions} ensures that $g_j\perp \Pi_{J}$
which implies that $D^{\alpha} \widehat{g_j} (0) = 0$. 
At this point we observe that $|\widehat{g_j}(\xi')| =o( |\xi|^J)$.  

Because $\delta>4$, 
it is not difficult to show that,
for $|\alpha| = J $, 
the functions $D^{\alpha} \widehat{g}_j$ are continuous with $\log$-H{\" o}lder
continuity:
the modulus of continuity of  $D^{\alpha}\widehat{g_j}$ satisfies $\omega(D^{\alpha}\widehat{g_j},h) \le |\log h |^{3-\delta}$.
It follows that
$\widehat{g}_j (\xi')  
\le C|\xi'|^{J}\bigl|\log|\xi'|\bigr|^{-\tau} $ with $\tau = \delta-3$ for $\xi'$ in a neighborhood of $0$.
\end{proof}

\section{Inverting the matrix symbol $\M$}\label{inverting_M}

We have seen that $\M$ is real symmetric positive definite, and hence has a \demph{Cholesky decomposition} $\M = \mathsf{L}\cdot\mathsf{L}^{T}$, where $\mathsf{L}$ is lower triangular. In this section, we extend the work of Radoux and Shapiro to give formulas for the entries of $\M^{-1}$ by computing $\mathsf{L}^{-1}$ explicitly. 
The following observations about matrix inversion will be useful in what follows and are stated without proof.

\begin{lemma}\label{lem: matrix arithmetic}
Let $X$ be the $n\times n$ matrix with $1$s on the first subdiagonal and zeros elsewhere. Let $L = \id + X$ and $K = \id-X$. We have:
\begin{enumerate}
\item\label{itm: L-1} 
$X^k$ has $1$s along the $k$-th subdiagonal and zeros elsewhere. In particular, $X$ is nilpotent of index $n$, i.e., $X^k\neq0$ for $k<n$ and $X^n=0$.

\item\label{itm: L-2} 
The inverse for $L$ is given by the formula $L^{-1} = \id -X+X^2-X^3+\dotsb+X^{n-1}$.
In particular, $L^{-1}$ is a lower-triangular checkerboard matrix of $+1$s and $-1$s, with $+1$s along the diagonal. 

\item\label{itm: L-3} 
The inverse of $K$ is given by the formula $K^{-1} = \id +X+X^2+\dotsb+X^{n-1}$. In particular,  $K^{-1}$ is a lower-triangular matrix of $1$s. 

\end{enumerate}
\end{lemma}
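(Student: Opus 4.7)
The plan is to prove part (1) directly by induction and then derive parts (2) and (3) as immediate consequences of the nilpotency of $X$ via the standard telescoping identity for geometric series.

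For part (1), I would let $e_0, e_1, \dots, e_{n-1}$ denote the standard basis of column vectors for $\reals^n$ and observe that by definition $Xe_i = e_{i+1}$ for $0 \le i < n-1$ and $Xe_{n-1}=0$. A straightforward induction on $k$ then shows $X^k e_i = e_{i+k}$ when $i+k < n$ and $X^k e_i = 0$ otherwise. Reading off the matrix of $X^k$ in the standard basis immediately gives the claim that $X^k$ has $1$'s on the $k$-th subdiagonal and zeros elsewhere, and since $X^n$ sends every $e_i$ to $0$ while $X^{n-1} e_0 = e_{n-1} \ne 0$, the nilpotency index is exactly $n$.

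For parts (2) and (3), the key observation is that since $X^n = 0$, the telescoping identities
\begin{equation*}
(\id - X)\bigl(\id + X + X^2 + \cdots + X^{n-1}\bigr) = \id - X^n = \id
\end{equation*}
and
\begin{equation*}
(\id + X)\bigl(\id - X + X^2 - \cdots + (-1)^{n-1}X^{n-1}\bigr) = \id - (-X)^n = \id
\end{equation*}
hold in the matrix algebra (the second identity is valid regardless of the parity of $n$, since $X^n = 0$ kills the final term). These show respectively that $K^{-1}$ and $L^{-1}$ are as claimed, and the descriptions of the entries follow by combining with part (1): each $X^k$ contributes its subdiagonal of $1$'s, and these subdiagonals occupy disjoint positions, so summing (alternating for $L^{-1}$) produces the lower-triangular matrix of $1$'s in case of $K^{-1}$ and the checkerboard of $\pm 1$'s with $+1$'s on the diagonal in case of $L^{-1}$.

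There is no substantive obstacle here; the only point requiring a moment's thought is to confirm that $(\id+X)\sum_{k=0}^{n-1}(-X)^k = \id$ holds uniformly in the parity of $n$, which is immediate once one notes that the only potentially surviving term $(-X)^n$ vanishes by part (1).
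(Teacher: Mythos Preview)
Your argument is correct and complete. Note that the paper actually states this lemma \emph{without proof}, so there is nothing to compare against; your approach---induction on $k$ for part (1), then the telescoping geometric series identity exploiting $X^n=0$ for parts (2) and (3)---is exactly the standard one and is surely what the authors had in mind when they omitted the details.
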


%
\subsection{Inverting Catalan Hankel matrices}\label{sec: C-inv}

In \cite{Rad}, Radoux produces an explicit description of the Cholesky decomposition of $\C(n)$ by introducing what we call here \emph{semi-Catalan} numbers 
\[
    \c_{n,k} := \binom{2n}{n+k} - \binom{2n}{n+k+1} .
\]
The semi-Catalan numbers satisfy a number of algebraic identities. We list a sampling from \cite{Rad} that will be useful in what follows:
\begin{gather}
\label{eq: semicat1}
  \c_{n,0} = \c_n,\quad \c_{n,n} = 1 \qand \c_{n,k} = 0 \hbox{ for }k\not\in[0,n], \\
\label{eq: semicat2}
  \sum_{k\geq0} (-1)^k \c_{n,k} = 0, \\
\label{eq: semicat3}
  \sum_{k\geq0} \c_{n,k} = \binom{2n}{n}.
\end{gather}
We also need a few standard binomial coefficient facts, which we call the \emph{factorial}, \emph{Pascal} and \emph{(Pascal-) hook} identities, respectively:
\begin{equation}\label{eq: binom-identities}
	\binom{p}{q} = \frac{p}{q}\binom{p-1}{q-1}, 
	\quad 
	\binom{p}{q} = \binom{p-1}{q} + \binom{p-1}{q-1}
	\qand
	\binom{p+1}{q} = \sum_{i=0}^q \binom{p-q+i}{i}.
\end{equation}
Let $\CL(n) = (\c_{j,k})_{0\leq j,k< n}$ be the lower-triangular matrix of semi-Catalan numbers. For example,
\[
	\CL(3) = \begin{pmatrix}
	1 \\
	1 & 1 \\
	2 & 3 & 1 \\
	5 & 9 & 5 & 1 
	\end{pmatrix} .
\]
Radoux shows that $\C(n) = \CL(n)\cdot \CL(n)^T$ and finds formulas for the entries of $\CL(n)^{-1}$ and $\C(n)^{-1}$.

\begin{proposition}\label{thm: C-inv}
For $0\leq j,k<n$, we have
\begin{align}
\label{eq: S-inv}
	\left(\CL(n)^{-1}\right)_{j,k} & = (-1)^{j+k}\binom{j+k}{j-k}\,, \\[.5ex]
\label{eq: C-inv}
	\left(\C(n)^{-1}\right)_{j,k} &= (-1)^{j+k} \sum_{i=\mathrm{max}\{j,k\}}^{n-1} \binom{i+j}{i-j}\binom{i+k}{i-k} \,.
\end{align}
\end{proposition}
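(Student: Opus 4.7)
The plan is to first reduce \eqref{eq: C-inv} to \eqref{eq: S-inv}, then verify \eqref{eq: S-inv} by direct multiplication. For the reduction, I would invoke Radoux's Cholesky decomposition $\C(n) = \CL(n)\CL(n)^T$ to obtain $\C(n)^{-1} = (\CL(n)^{-1})^T\,\CL(n)^{-1}$. Substituting \eqref{eq: S-inv} and noting that $\binom{i+j}{i-j}=0$ whenever $i<j$ yields \eqref{eq: C-inv} immediately, with the inner sum truncating at $i=\max\{j,k\}$.

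To prove \eqref{eq: S-inv}, set $M_{\ell,k}:=(-1)^{\ell+k}\binom{\ell+k}{\ell-k}$; since $M$ is lower triangular with $1$s on the diagonal, it suffices to show $\CL(n)\cdot M = \id$. The $(j,k)$-entry of this product is
\[
(\CL(n)M)_{j,k} \;=\; \sum_{\ell=k}^{j}\c_{j,\ell}\,(-1)^{\ell+k}\binom{\ell+k}{\ell-k},
\]
where the summation range uses $\c_{j,\ell}=0$ for $\ell>j$ (by \eqref{eq: semicat1}) and the vanishing of the binomial coefficient for $\ell<k$. The cases $j\le k$ are trivial---either an empty sum or the single term $\c_{j,j}\cdot 1 = 1$---so the substance of the argument is showing this sum vanishes for $j>k$.

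I would establish this vanishing via generating functions. With $C(z) = (1-\sqrt{1-4z})/(2z)$ the Catalan generating function, which satisfies $zC(z)^2 = C(z)-1$, the classical column identity
\[
\sum_{j\ge\ell}\c_{j,\ell}\,z^j \;=\; z^\ell\,C(z)^{2\ell+1}
\]
holds (equivalent to $\c_{j,\ell} = \tfrac{2\ell+1}{2j+1}\binom{2j+1}{j-\ell}$, provable by Aebly's reflection trick reviewed in Section \ref{Catalan path counting}, or by Lagrange inversion). The binomial series gives $\sum_{\ell\ge k}M_{\ell,k}\,z^\ell = z^k/(1+z)^{2k+1}$. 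Interchanging the order of summation and substituting $1+zC(z)^2 = C(z)$ yields
\[
\sum_{j\ge0}(\CL(n)M)_{j,k}\,z^j \;=\; C(z)\cdot\frac{(zC(z)^2)^k}{(1+zC(z)^2)^{2k+1}} \;=\; z^k,
\]
which is the generating function of $(\delta_{j,k})_j$, finishing the verification. The main obstacle is the column generating function identity for $\c_{j,\ell}$; once that is in hand, the rest collapses algebraically via $1+zC^2=C$. A generating-function-free alternative would use the Pascal-like recurrence $\c_{n,k} = \c_{n-1,k-1}+2\c_{n-1,k}+\c_{n-1,k+1}$ and induct on $j-k$, but this is computationally heavier.
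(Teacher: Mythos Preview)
The paper does not supply a proof of this proposition: it attributes the Cholesky decomposition $\C(n)=\CL(n)\CL(n)^T$ and the formulas \eqref{eq: S-inv}--\eqref{eq: C-inv} to Radoux \cite{Rad} and simply states the result. Your proposal therefore goes beyond what the paper does, and it is correct.

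Your reduction of \eqref{eq: C-inv} to \eqref{eq: S-inv} via $\C(n)^{-1}=(\CL(n)^{-1})^T\CL(n)^{-1}$ is exactly how the paper uses the Cholesky decomposition in the subsequent corollaries (e.g.\ Corollary \ref{thm: sC-inv}), so that part is in the same spirit. For \eqref{eq: S-inv} itself, your generating-function argument is clean: the column identity $\sum_{j\ge\ell}\c_{j,\ell}z^j=z^\ell C(z)^{2\ell+1}$ together with the binomial series $\sum_{\ell\ge k}(-1)^{\ell+k}\binom{\ell+k}{\ell-k}z^\ell=z^k(1+z)^{-(2k+1)}$ and the functional equation $1+zC(z)^2=C(z)$ collapses the double sum to $z^k$, giving $(\CL M)_{j,k}=\delta_{j,k}$. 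One small point worth making explicit is that the generating-function computation is carried out for the infinite lower-triangular arrays, but since both $\CL$ and $M$ are lower triangular, the $(j,k)$ entry of the product involves only indices $\le j<n$, so the identity transfers to the $n\times n$ truncations without further work.
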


Similar results for $\sC$, $\B$ and $\sB$ follow from Proposition \ref{thm: C-inv} and Lemma \ref{lem: matrix arithmetic}, as we now see.

%
\subsection{Inverting shifted Catalan matrices}\label{sec: sC-inv}

In \cite{Shap}, Shapiro finds the Cholesky decomposition of $\sC(n)$. He defines what we call here \emph{shifted semi-Catalan} numbers 
\[
    \c'_{n,k} := \frac{k+1}{n+1}\binom{2n+2}{n-k} ,
\]
and proves that the lower triangular matrix $\sCL(n) = (\c'_{j,k})_{0\leq j,k<n}$ satisfies $\sC(n) = \sCL(n)\cdot \sCL(n)^T$.
He does not develop shifted equivalents of \eqref{eq: S-inv} and \eqref{eq: C-inv}, so we do this here. 
We need two lemmas.

\begin{lemma}\label{thm: sS is sum} 
For all $0\leq k\leq n$, the shifted semi-Catalan numbers satisfy 
$
    \c'_{n,k} = \c_{n,k} + \c_{n,k+1}.
$
\end{lemma}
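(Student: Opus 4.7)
The plan is to establish the identity by direct binomial manipulation, exploiting the telescoping structure of the right-hand side. First, I would substitute the definition $\c_{n,k} = \binom{2n}{n+k} - \binom{2n}{n+k+1}$ into the sum; the middle terms telescope and leave
\[
    \c_{n,k} + \c_{n,k+1} = \binom{2n}{n+k} - \binom{2n}{n+k+2}.
\]
Using the symmetry $\binom{2n}{n+j} = \binom{2n}{n-j}$ (to match the $n-k$ pattern appearing in $\c'_{n,k}$), this becomes $\binom{2n}{n-k} - \binom{2n}{n-k-2}$.

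Next, I would put both terms over the common denominator $(n-k)!\,(n+k+2)!$, rewriting the difference as
\[
    \frac{(2n)!}{(n-k)!\,(n+k+2)!}\,\bigl[(n+k+1)(n+k+2) - (n-k)(n-k-1)\bigr].
\]
A quick expansion shows the bracketed expression collapses to $2(2n+1)(k+1)$, the cross terms $\pm 2nk$ and $\pm(n-k)$ combining cleanly.

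On the other side, I would rewrite the LHS by the factorial identity \eqref{eq: binom-identities}: $(2n+2)! = (2n+2)(2n+1)(2n)!$ and $\tfrac{2n+2}{n+1} = 2$, giving
\[
    \frac{k+1}{n+1}\binom{2n+2}{n-k} = \frac{2(k+1)(2n+1)\,(2n)!}{(n-k)!\,(n+k+2)!}.
\]
This coincides with the simplified RHS, proving the identity for $0 \le k \le n-2$.

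The only subtle point will be the boundary cases $k=n-1$ and $k=n$, where $\binom{2n}{n-k-2}$ has a negative lower index. Using the standard convention that $\binom{2n}{r}=0$ for $r<0$, together with the vanishing condition in \eqref{eq: semicat1} (which forces $\c_{n,n+1}=0$), these cases fall out of the same formula; alternatively, one can verify them by inspection ($\c'_{n,n} = 1 = \c_{n,n}$ and $\c'_{n,n-1} = 2n = \c_{n,n-1} + 1$). I do not anticipate any genuine obstacle here—the identity is essentially a routine Pascal-type manipulation, and the only care required is tracking the shift in indices correctly.
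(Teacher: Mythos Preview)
Your argument is correct. You and the paper both pass through the telescoped intermediate form $\binom{2n}{n-k}-\binom{2n}{n-k-2}$, but you reach it from opposite ends and by different mechanisms. You telescope the right-hand side first, then reduce both sides to a common factorial expression, relying on the explicit polynomial simplification $(n+k+1)(n+k+2)-(n-k)(n-k-1)=2(2n+1)(k+1)$. The paper instead starts from the left-hand side and applies the identity $\binom{p}{q}=\frac{p}{p-2q}\bigl[\binom{p-1}{q}-\binom{p-1}{q-1}\bigr]$ to $\binom{2n+2}{n-k}$, which is tailored to cancel the prefactor $\frac{k+1}{n+1}$ cleanly; two applications of Pascal's identity then yield the telescoped form directly. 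The paper's route stays entirely within binomial identities and handles all $0\le k\le n$ uniformly (no separate boundary check is needed), while yours trades that uniformity for a more hands-on but equally elementary factorial computation. Both are short and valid.
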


\begin{proof}
We use the identity 
\begin{equation}\label{eq: binom-identity}
    \binom{n}{k} = \frac{n}{n-2k} \left[\binom{n-1}{k} - \binom{n-1}{k-1}\right],
\end{equation}
which may be deduced from the factorial and Pascal identities \eqref{eq: binom-identities}. We have
\begin{align*}
    \c'_{n,k} &= \frac{k+1}{n+1}\binom{2n+2}{n-k} \\
	&= \frac{k+1}{n+1}\frac{2n+2}{2n+2-2(n-k)}\left[\binom{2n+1}{n-k} - \binom{2n+1}{n-k-1}\right] \\[.5ex]
	&= 1\cdot\left[\binom{2n}{n-k} + \binom{2n}{n-k-1} - \binom{2n}{n-k-1} - \binom{2n}{n-k-2}\right] \\[.5ex]
	&= \binom{2n}{n+k} - \binom{2n}{n+k+1} + \binom{2n}{n+k+1} - \binom{2n}{n+k+2} 
	= \c_{n,k} + \c_{n,k+1}\,.\qedhere 
\end{align*}
\end{proof}

Thus we may write $\sCL(n) = \CL(n)\cdot (\id + X)$, where $X$ is the matrix with $1$s along the first subdiagonal and zeros elsewhere. As a consequence, we get the following formulas. 

\begin{corollary}\label{thm: sC-inv}
For $0\leq j,k<n$, we have
\begin{align}
\label{eq: sS-inv}
	\bigl(\sCL(n)^{-1}\bigr)_{j,k} &= (-1)^{j+k}\binom{j+k+1}{j-k} \\[.5ex]
\label{eq: sC-inv}
	(\sC(n)^{-1})_{jk} &= (-1)^{j+k} \sum_{i=\mathrm{max}\{j,k\}}^{n-1} \binom{i+j+1}{i-j}\binom{i+k+1}{i-k} .
\end{align}
\end{corollary}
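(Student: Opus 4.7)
\medskip
\noindent\textbf{Proof plan.} The key structural input is already supplied by Lemma \ref{thm: sS is sum}, which tells us that $\sCL(n) = \CL(n)\cdot(\id+X)$, where $X$ is the sub-diagonal shift matrix of Lemma \ref{lem: matrix arithmetic}. Inverting this factorization gives $\sCL(n)^{-1} = (\id+X)^{-1}\cdot \CL(n)^{-1}$, and by Lemma \ref{lem: matrix arithmetic}(\ref{itm: L-2}), the matrix $(\id+X)^{-1}$ is the lower-triangular checkerboard matrix with $((\id+X)^{-1})_{j,\ell} = (-1)^{j-\ell}$ for $0\le \ell\le j$ and zero otherwise. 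Combining with \eqref{eq: S-inv}, I would compute
\[
	\bigl(\sCL(n)^{-1}\bigr)_{j,k}
	\ =\ \sum_{\ell=k}^{j} (-1)^{j-\ell}(-1)^{\ell+k}\binom{\ell+k}{\ell-k}
	\ =\ (-1)^{j+k}\sum_{i=0}^{j-k}\binom{2k+i}{i},
\]
where I reindexed with $i=\ell-k$. The sum is the left-hand side of the Pascal-hook identity \eqref{eq: binom-identities} with parameters $p=j+k$ and $q=j-k$, collapsing to $\binom{j+k+1}{j-k}$, which is \eqref{eq: sS-inv}.

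\smallskip
For \eqref{eq: sC-inv}, the plan is to use the Cholesky factorization $\sC(n) = \sCL(n)\cdot\sCL(n)^T$ established by Shapiro, so that $\sC(n)^{-1} = \bigl(\sCL(n)^{-1}\bigr)^T\cdot \sCL(n)^{-1}$. Expanding the matrix product with the formula \eqref{eq: sS-inv} just derived and noting that $(\sCL(n)^{-1})_{i,j}$ vanishes unless $i\ge j$ (so the summation index is cut off at $\max\{j,k\}$ from below and at $n-1$ from above) produces
\[
	\bigl(\sC(n)^{-1}\bigr)_{j,k}
	\ =\ \sum_{i=\max\{j,k\}}^{n-1}(-1)^{i+j}\binom{i+j+1}{i-j}(-1)^{i+k}\binom{i+k+1}{i-k},
\]
in which the sign collapses to $(-1)^{j+k}$ since $2i$ is even. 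This is exactly \eqref{eq: sC-inv}.

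\smallskip
I do not anticipate any real obstacle in this argument: once the factorization $\sCL(n)=\CL(n)(\id+X)$ from Lemma \ref{thm: sS is sum} is in hand, everything reduces to bookkeeping and a single application of the Pascal-hook identity from \eqref{eq: binom-identities}. The mildly non-obvious step is recognizing the telescoped sum $\sum_{i=0}^{j-k}\binom{2k+i}{i}$ as a single binomial coefficient, but the hook identity delivers this immediately. All subsequent manipulations mirror the proof of Proposition \ref{thm: C-inv} in \cite{Rad}, with $\binom{j+k}{j-k}$ replaced by $\binom{j+k+1}{j-k}$ throughout.
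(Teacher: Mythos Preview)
Your proposal is correct and follows essentially the same route as the paper's own proof: invert the factorization $\sCL(n)=\CL(n)(\id+X)$ from Lemma~\ref{thm: sS is sum}, use Lemma~\ref{lem: matrix arithmetic}\eqref{itm: L-2} together with \eqref{eq: S-inv} to compute the resulting matrix product, reindex and collapse via the hook identity, then read off \eqref{eq: sC-inv} from the Cholesky factorization $\sC(n)=\sCL(n)\sCL(n)^T$.
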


\begin{proof}
Write $\sCL(n)^{-1} = (\id+X)^{-1}\CL(n)^{-1}$. Using Lemma \ref{lem: matrix arithmetic}\eqref{itm: L-2} and \eqref{eq: S-inv}, we see that the $(j,k)$ entry of $\sCL(n)^{-1}$ takes the form
\[
	\sum_{i=0}^{n-1} (-1)^{j+i}(-1)^{i+k}\binom{i+k}{i-k}
	 = (-1)^{j+k} \sum_{i=k}^j \binom{i+k}{i-k}
	 = (-1)^{j+k} \sum_{i=0}^{j-k} \binom{2k+i}{i}.
\]
Now use the hook identity \eqref{eq: binom-identities} with $(p,q)=(j+k,j-k)$ to get the advertised quantity in \eqref{eq: sS-inv}. 

Now \eqref{eq: sC-inv} follows by expanding $\sC(n)^{-1} = \bigl(\sCL(n)\cdot \sCL(n)^T\bigr)^{-1}$ using \eqref{eq: sS-inv}.
\end{proof}

%
\subsection{Inverting middle binomial Hankel matrices}

In \cite{Rad}, Radoux also finds the Cholesky decomposition of $\B(n)$. More precisely, he builds polynomial generalizations of our $\B(n)$ and decomposes these. Briefly, he defines a family of polynomials 
\[
    \b_{n,k}(x) := \c_{n,k} + \c_{n,k+1}x + \dotsb + \c_{n,n} x^{n-k} \,,
\]
and populates a lower-triangular matrix with them. For our purposes, we may simply set $x=1$ above and speak about the matrix $\BL$ of \emph{semi-binomial} numbers\footnote{Note that $\b_{n,0}(1) = \b_{n}=\binom{2n}{n}$, cf. \eqref{eq: semicat3}, which justifies our choice of name.} $\b_{i,j} := \b_{i,j}(1)$. For example, 
\[
	\BL(4) = 
	\left(\begin{array}{@{\,}llll@{\,}}
	1 \\
	2 & 1 \\
	6 & 4 & 1 \\
	20 & 15 & 6 & 1 
	\end{array}\right) .
\]
These $\BL(n) = \bigl(\b_{j,k}\bigr)_{0\leq j,k<n}$ and the diagonal matrices $D(n) = \diag{1,2,\dotsc,2}$ give the desired Cholesky decomposition, 
$
    \B(n) = \BL(n) \cdot D(n) \cdot \BL(n)^T.
$

Formulas for the inverse of $\BL$, and hence $\B$, may thus be deduced from those for $\C$ and $\CL$, as we now recount. 
We observe a simple identity that will be useful in what follows.
\begin{gather}\label{eq: b recursion}
	\b_{n,k} = \c_{n,k} + \b_{n,k+1}.
\end{gather}

\begin{proposition}\label{thm: B-inv}
For $0\leq j,k<n$, we have 
\begin{align}
\label{eq: R-inv}
	\bigl(\BL(n)^{-1}\bigr)_{j,k} & = (-1)^{j+k}\left[2\binom{j+k}{j-k} - \binom{j+k-1}{j-k} \right]\,, \\[.5ex]
\label{eq: B-inv}
\bigl(\B(n)^{-1}\bigr)_{j,k} &= \frac{\delta_{j0}\delta_{k0}}{2} + \frac{(-1)^{j+k}}{2} \!\! \sum_{i=\mathrm{max}\{j,k\}}^{n-1}  
	\left[2\binom{i+j}{i-j} - \binom{i+j-1}{i-j}\right]
	\left[2\binom{i+k}{i-k} - \binom{i+k-1}{i-k}\right] .
\end{align}
\end{proposition}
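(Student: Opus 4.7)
The plan is to parallel the derivation of Corollary \ref{thm: sC-inv}: first reduce the inversion of $\BL(n)$ to that of $\CL(n)$, then invoke Proposition \ref{thm: C-inv}. The key algebraic observation is the recursion \eqref{eq: b recursion}, $\b_{n,k} = \c_{n,k} + \b_{n,k+1}$, which (using $\b_{n,n+1}=0$) telescopes to $\b_{n,k} = \sum_{i=k}^{n} \c_{n,i}$. In matrix form, this reads
\[
	\BL(n) \; = \; \CL(n) \cdot K^{-1},
\]
where $K = \id - X$ and $X$ is the subdiagonal shift of Lemma \ref{lem: matrix arithmetic}, so that $K^{-1}$ is the lower triangular matrix of all ones (Lemma \ref{lem: matrix arithmetic}\eqref{itm: L-3}). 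Inverting gives
\[
	\BL(n)^{-1} \; = \; K \cdot \CL(n)^{-1} \; = \; (\id - X)\, \CL(n)^{-1},
\]
so that $(\BL^{-1})_{j,k} = (\CL^{-1})_{j,k} - (\CL^{-1})_{j-1,k}$, with the convention $(\CL^{-1})_{-1,k}:=0$. Substituting \eqref{eq: S-inv} yields
\[
	(\BL^{-1})_{j,k} \; = \; (-1)^{j+k}\left[\binom{j+k}{j-k} + \binom{j+k-1}{j-k-1}\right],
\]
and the Pascal identity from \eqref{eq: binom-identities}, $\binom{j+k-1}{j-k-1} = \binom{j+k}{j-k} - \binom{j+k-1}{j-k}$, delivers \eqref{eq: R-inv}.

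For formula \eqref{eq: B-inv}, we exploit Radoux's Cholesky factorization $\B(n) = \BL(n)\, D(n)\, \BL(n)^T$ to write
\[
	(\B^{-1})_{j,k} \; = \; \sum_{i=0}^{n-1} (\BL^{-1})_{i,j}\, D^{-1}_{i,i}\, (\BL^{-1})_{i,k}.
\]
Since $\BL^{-1}$ is lower triangular, the $i$-sum effectively begins at $\max(j,k)$. The diagonal matrix $D^{-1} = \diag{1, \tfrac{1}{2}, \dotsc, \tfrac{1}{2}}$ is almost (but not quite) scalar; writing $D^{-1} = \tfrac{1}{2}\id + \tfrac{1}{2}E_{00}$, where $E_{00}$ is the elementary matrix supported at the $(0,0)$ entry, the contribution of the rank-one piece is
$\tfrac{1}{2}(\BL^{-1})_{0,j}(\BL^{-1})_{0,k} = \tfrac{1}{2}\delta_{j,0}\delta_{k,0}$, and the contribution of $\tfrac{1}{2}\id$ is the half-sum in \eqref{eq: B-inv} (noting that the sign simplifies via $(-1)^{i+j}(-1)^{i+k} = (-1)^{j+k}$). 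Combining and invoking \eqref{eq: R-inv} produces \eqref{eq: B-inv}.

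The principal hurdle in this argument is the initial factorization $\BL(n) = \CL(n)\cdot K^{-1}$: once this is in hand, all subsequent steps are elementary manipulations of triangular matrices and binomial identities, together with the bookkeeping for the anomalous diagonal entry $D(n)_{0,0}=1$ that is responsible for the Kronecker-delta correction term in \eqref{eq: B-inv}.
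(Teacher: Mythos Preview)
Your proof is correct and follows essentially the same route as the paper: both use the factorization $\BL(n)=\CL(n)\,K^{-1}$ from \eqref{eq: b recursion} and Lemma \ref{lem: matrix arithmetic}\eqref{itm: L-3} to reduce to $(\id-X)\CL(n)^{-1}$, apply \eqref{eq: S-inv} and Pascal's identity to obtain \eqref{eq: R-inv}, and then unwind the Cholesky factorization $\B(n)=\BL(n)D(n)\BL(n)^T$ with the special treatment of $D(n)_{0,0}=1$ to get \eqref{eq: B-inv}. Your splitting $D^{-1}=\tfrac12\id+\tfrac12 E_{00}$ is a slightly cleaner bookkeeping device for the Kronecker-delta correction than the paper's case analysis, but the argument is otherwise identical.
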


\begin{proof}
First, note by \eqref{eq: b recursion} that 
\[
	\BL(n) = \CL(n)\left(\id+X+X^2+\cdots+X^{n-1}\right),
\]
where $X$ is the matrix having $1$s along the first subdiagonal and zeros elsewhere. Thus, we may use Lemma \ref{lem: matrix arithmetic}\eqref{itm: L-3} to write the $(j,k)$ entry of $\BL(n)^{-1}=\BL(n;1)^{-1} = (\id-X)\CL(n)^{-1}$ as
\[
	\bigl(\CL(n)^{-1}\bigr)_{j,k} - \bigl(\CL(n)^{-1}\bigr)_{j-1,k} 
	= (-1)^{j+k}\left[\binom{j+k-1}{j-k-1} + \binom{j+k}{j-k} \right] .
\]
Finally, we use Pascal's identity \eqref{eq: binom-identities} to rewrite this formula as it appears in \eqref{eq: R-inv}.\footnote{Why do we choose to write $2\binom{j+k}{j-k} - \binom{j+k-1}{j-k}$ instead of $\binom{j+k-1}{j-k-1} + \binom{j+k}{j-k}$? Consider the case $j=k=0$. There is some disagreement, e.g., among computer algebra packages, about the value of $\binom{-1}{-1}+\binom{0}{0}$. On the other hand, the value of $2\binom{-1}{0}-\binom{0}{0}$ is considerably less contentious.
}

We next turn to $\B(n)$. The specific form of \eqref{eq: B-inv} combines \eqref{eq: R-inv}, the Cholesky decomposition and the fact that $D(n)^{-1} = \diag{1,\frac12, \frac12,\dotsc,\frac12}$. Simply observe that the general formula 
\[
(\B(n)^{-1})_{j,k} = (-1)^{j+k} \sum_{i=\mathrm{max}\{j,k\}}^{n-1} \frac1{2} 
	\left[2\binom{i+j}{i-j} - \binom{i+j-1}{i-j}\right]
	\left[2\binom{i+k}{i-k} - \binom{i+k-1}{i-k}\right]
\]
holds for $(j,k)\neq(0,0)$. In case $(j,k)=(0,0)$, the factor $\frac1{2}$ for the summand $i=0$ should be replaced by $1$. As the quantity
$\left[2\binom{i+j}{i-j}-\binom{i+j-1}{i-j}\right]
	\left[2\binom{i+k}{i-k}-\binom{i+k-1}{i-k}\right]=\left[2\binom{0}{0}-\binom{-1}{0}\right]
	\left[2\binom{0}{0}+\binom{-1}{0}\right]$ equals 1 in this case, this is accomplished by adding an extra $\frac12$ to the sum. 
\end{proof}

%
\subsection{Inverting shifted middle binomial Hankel matrices}

We now develop the shifted analogs of \eqref{eq: R-inv} and \eqref{eq: B-inv}. 
Note that by construction, $\sB(n)$ is the top-right $n\times n$ submatrix of $\B(n{+}1)$. We may thus begin with the Cholesky decomposition of $\B(n)$ and work towards one for $\sB(n)$. We have
\[
\B(n{+}1) = 
	 \left(\begin{array}{@{}c@{\ }|@{\ }c@{\ }c@{\ }c@{}}
	    \b_0 & \b_1 & \cdots & \b_n \\ 
	    \b_1 & \b_2 & \cdots  & \b_{n+1} \\ 
	    \vdots & \vdots  & \ddots & \vdots \\
	\hline
	    \,\b_n & \b_{n+1} & \dots & \b_{2n-2}	
	 \end{array}\right)
 = 
	 \left(\begin{array}{@{}c@{\ }c@{\ }c@{\ }|@{\ }c@{}}
	    \b_{0,0} & \ & \   \\ 
	    \b_{1,0} & \b_{1,1} & \  \\ 
	    \vdots & \vdots  & \ddots \\
	    \hline
	    \,\b_{n,0} & \b_{n,1} & \cdots & \b_{n,n}
	\end{array}\right)
	 \left(\begin{array}{@{}c@{\ }c@{\ }c@{\ }|@{\ }c@{}}
	    1\  & \ & \   \\ 
	    \ & 2\  & \  \\ 
	    \ & \ & \ddots\, \\
	    \hline
	    &&& \,2\, 
	\end{array}\right)
	 \left(\begin{array}{@{}c@{\ }|@{\ }c@{\ }c@{\ }c@{}}
	    \b_{0,0} & \b_{1,0} & \cdots & \b_{n,0} \\ 
	    \ & \b_{1,1} & \cdots & \b_{n,1} \\ 
	    \ & \ & \ddots & \vdots \\
	    \hline
	    &&&\b_{n,n}
	\end{array}\right) ,
\]
which gives a factorization $(L,D,\widetilde U)$ of $\sB(n)$ that is \emph{nearly} the factorization we seek. E.g., 
\[
\sB(3) =
	 \begin{pmatrix} 
	    \b_1 & \b_2 & \b_3  \\ 
	    \b_2 & \b_3 & \b_4  \\ 
	    \b_3 & \b_4 & \b_5 
	\end{pmatrix}
 = 
	 \begin{pmatrix} 
	    \b_{0,0} & \ & \    \\ 
	    \b_{1,0} & \b_{1,1} & \    \\ 
	    \b_{2,0} & \b_{2,1} & \b_{2,2}   
	\end{pmatrix}
\!
	 \begin{pmatrix} 
	    1 & \ & \   \\ 
	    \ & 2 & \   \\ 
	    \ & \ & 2   
	\end{pmatrix}
\!
	 \begin{pmatrix} 
	    \b_{1,0} & \b_{2,0}  & \b_{3,0}  \\ 
	    \b_{1,1} & \b_{2,1}  & \b_{3,1}  \\ 
	    \ & \b_{2,2} & \b_{3,2}   
	\end{pmatrix} \!.
\]

We aim for a standard Cholesky decomposition $(\sBL,D',\sBL^T)$, with $\sBL$ a lower uni-triangular matrix. As a preview of the final outcome, the relationship between $\sBL$ and $\BL$ will resemble the relationship between $\sCL$ and $\CL$ announced in Lemma \ref{thm: sS is sum}, namely $\b'_{n,k} = \b_{n,k} + \b_{n,k+1}$ for most $n$ and $k$. We arrive at the Cholesky decomposition in four steps: pulling the lower-triangular bits out of $\widetilde U$ (Steps 1--3) and passing them to $L$ (Step 4). In Steps 1--3, we index the rows and columns of $\widetilde U$ as they are found in $\B(n{+}1)$, so, e.g., the top-left position of $\widetilde U$ has index $(0,1)$. 

\medskip\noindent\emph{Step 1: Rescaling the first row.}
Notice that the top-left entry of $\widetilde U$ is $\b_{1,0} = 2$. Let us rescale this zeroth row (dividing by two).

\medskip\noindent\emph{Step 2: Initial reduction.} 
We perform our first row reduction, pivoting on the top-left position $(0,1)$ of $\widetilde U$. Since $\b_{1,1}=1$, we simply subtract the zeroth row of $\widetilde U$ from the first row.

\medskip\noindent\emph{Step 3: Subsequent reductions.} 
As it happens, the $(k{-}1,k)$ position of $\widetilde U$ will be the pivot position for the $k$-th step in the row-reduction process ($k>0$). Moreover, just as in Step 2, the reduction amounts to simply subtracting the \emph{new} $(k{-}1)$-st row from the \emph{current} $k$-th row. Keeping track of all reductions that have come before, our assertion may be summarized as follows.

\begin{lemma}\label{thm: tilde U}
For all $k>0$, the semi-binomial numbers $\b_{k,j}$ satisfy
\[
	\frac{1}{2}\b_{k,0} + \sum_{j=1}^{k} (-1)^{j} \b_{k,j} = 0.
\]
\end{lemma}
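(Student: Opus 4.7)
The plan is to reduce the claim to the known alternating semi-Catalan identity \eqref{eq: semicat2}. Starting from the definition $\b_{k,j}(x) = \c_{k,j} + \c_{k,j+1}x + \dotsb + \c_{k,k}x^{k-j}$ evaluated at $x=1$, we have $\b_{k,j} = \sum_{i=j}^{k} \c_{k,i}$. Substituting this into the target expression and swapping the order of summation gives
\[
\frac{1}{2}\b_{k,0} + \sum_{j=1}^{k}(-1)^{j}\b_{k,j}
= \frac{1}{2}\sum_{i=0}^{k}\c_{k,i} + \sum_{i=1}^{k}\c_{k,i}\sum_{j=1}^{i}(-1)^{j}.
\]

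The inner sum $\sum_{j=1}^{i}(-1)^j$ equals $\tfrac{(-1)^i-1}{2}$ for $i\geq 1$. Plugging this in and combining the two $\c_{k,i}$ terms with matching $i\geq 1$, the non-alternating pieces cancel, leaving
\[
\frac{1}{2}\c_{k,0} + \frac{1}{2}\sum_{i=1}^{k}(-1)^{i}\c_{k,i}
= \frac{1}{2}\sum_{i=0}^{k}(-1)^{i}\c_{k,i}.
\]
Since $\c_{k,i}=0$ for $i>k$ by \eqref{eq: semicat1}, this is precisely $\tfrac{1}{2}\sum_{i\geq 0}(-1)^{i}\c_{k,i}$, which vanishes by the alternating identity \eqref{eq: semicat2}. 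This proves the lemma.

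There is essentially no obstacle here beyond careful bookkeeping of the boundary cases: one must ensure that $\b_{k,k} = \c_{k,k} = 1$ and that the telescoping is handled correctly at $i=0$. Once the definition of $\b_{k,j}$ is unwound, the identity is a short double-sum manipulation that reduces directly to \eqref{eq: semicat2}, and no new combinatorial input is required.
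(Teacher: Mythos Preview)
Your proof is correct and takes essentially the same approach as the paper: both reduce the claim to the alternating semi-Catalan identity \eqref{eq: semicat2} via the relation between $\b_{k,j}$ and the $\c_{k,i}$. The paper multiplies by two and telescopes using the one-step recursion $\b_{n,k}=\c_{n,k}+\b_{n,k+1}$ (equation \eqref{eq: b recursion}), whereas you unwind that recursion to the closed form $\b_{k,j}=\sum_{i=j}^{k}\c_{k,i}$ and swap the order of summation; these are equivalent manipulations of the same idea.
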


\begin{proof}
Multiply the identity by two, then apply \eqref{eq: b recursion} and \eqref{eq: semicat2}, in that order. 
\end{proof}
For example,
\begin{align*}
	\b_{3,0}  -2\b_{3,1} + 2\b_{3,2} - 2\b_{3,3} &= 	(\b_{3,0}-\b_{3,1}) - (\b_{3,1}-\b_{3,2}) + (\b_{3,2}-\b_{3,3}) - (\b_{3,3}-\b_{3,4}) \\
	&= \c_{3,0} - \c_{3,1} + \c_{3,2} - \c_{3,3} \\
	&=0.
\end{align*}

At the end of the reduction process, $\widetilde U$ is upper uni-triangular. Let us now reindex and relabel its entries, since they are the shifted semi-binomial numbers we seek, $\widetilde U = \bigl(\b'_{j,i}\bigr)_{0\leq i,j<n}$. 

\medskip\noindent\emph{Step 4: Combining the lower triangular factors.} 
What remains is to slip the (inverse of the) matrix that records our performed row operations past $D$, and combine it with the lower-triangular matrix $L$. We have 
\begin{align*}
\sB(n) &= 
	 \begin{pmatrix} 
	    \b_{0,0} & \ & \   \\ 
	    \b_{1,0} & \b_{1,1} & \    \\ 
	    \b_{2,0} & \b_{2,1} & \b_{2,2}   \\
	    \vdots && & \ddots
	\end{pmatrix}
\!
	 \begin{pmatrix} 
	    1 & \ & \   \\ 
	    \ & 2 & \   \\ 
	    \ & \ & 2   \\
	    &&& \ddots
	\end{pmatrix}
\!
	 \begin{pmatrix} 
	    2 & \ & \   \\ 
	    1/2\ & 1 & \   \\ 
	    \ & 1 & 1   \\
	    && \ddots & \ddots
	\end{pmatrix}
\!
	 \begin{pmatrix} 
	    1 & \b'_{1,0}  & \b'_{2,0}  & \cdots \\ 
	    \ & 1 & \b'_{2,1}  \\ 
	    \ & \ & 1 \\
	    &&& \ddots
	\end{pmatrix} \\
&=
	 \begin{pmatrix} 
	    \b_{0,0} & \ & \    \\ 
	    \b_{1,0} & \b_{1,1} & \    \\ 
	    \b_{2,0} & \b_{2,1} & \b_{2,2}   \\
	    \vdots &&& \ddots
	\end{pmatrix}
\!
	 \begin{pmatrix} 
	    1 & \ & \   \\ 
	    1/2\ & 1 & \   \\ 
	    \ & 1 & 1   \\
	    &&\ddots & \ddots 
	\end{pmatrix}
\!
	 \begin{pmatrix} 
	    2 & \ & \   \\ 
	    \ & 2 & \   \\ 
	    \ & \ & 2   \\
	    &&& \ddots
	\end{pmatrix}
\!
	 \begin{pmatrix} 
	    1 & \b'_{1,0}  & \b'_{2,0}  & \cdots \\ 
	    \ & 1 & \b'_{2,1}  \\ 
	    \ & \ & 1 \\
	    &&& \ddots 
	\end{pmatrix} 
\\
&=
	 \begin{pmatrix} 
	    \b''_{0,0} & \ & \    \\ 
	    \b''_{1,0} & \b''_{1,1} & \    \\ 
	    \b''_{2,0} & \b''_{2,1} & \b''_{2,2}   \\
	    \vdots &&& \ddots
	\end{pmatrix}
\!
	 \begin{pmatrix} 
	    2 & \ & \   \\ 
	    \ & 2 & \   \\ 
	    \ & \ & 2   \\
	    &&& \ddots
	\end{pmatrix}
\!
	 \begin{pmatrix} 
	    \b'_{0,0} & \b'_{1,0}  & \b'_{2,0}  & \cdots \\ 
	    \ & \b'_{1,1} & \b'_{2,1}  \\ 
	    \ & \ & \b'_{2,2} \\
	    &&& \ddots 
	\end{pmatrix} .
\end{align*}
Note that the entries $\b''_{j,k}$ and $\b'_{j,k}$ appearing above are equal. Indeed, both describe uni-triangular arrays of numbers, and any such decomposition $(L,D,U)$ of a symmetric matrix must have $L=U^T$. Performing the column operations indicated in the displayed equation above, we deduce a simple formula for the shifted semi-binomial numbers $\b'_{j,k}$:
\[
	\b'_{j,0} = \b_{j,0} + \frac{1}{2}\b_{j,1} 
	\qand
	\b'_{j,k} = \b_{j,k} + \b_{j,k+1} \ \ (\hbox{for }k>0).
\]

With $\sBL$ in hand, we are ready to compute the inverse of $\sB$.

\begin{corollary} 
For $0\leq j,k<n$, we have 
\begin{align}
 \label{eq: sR-inv}
 	\bigl(\sBL(n)^{-1}\bigr)_{j,k} &= (-1)^{j+k}\left[2\binom{j+k+1}{j-k} - \binom{j+k}{j-k}\right],  \\[.5ex]
\label{eq: sB-inv}
	\bigl(\sB(n)^{-1}\bigr)_{j,k} &= (-1)^{j+k} \sum_{i=\mathrm{max}\{j,k\}}^{n-1} \frac12 
	\left[2\binom{i+j+1}{i-j}-\binom{i+j}{i-j}\right]
	\left[2\binom{i+k+1}{i-k}-\binom{i+k}{i-k}\right] .
\end{align}
\end{corollary}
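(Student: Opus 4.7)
The plan is to mirror the proof of Corollary \ref{thm: sC-inv}: first invert $\sBL(n)$ by combining the preceding derivation's factorization with \eqref{eq: R-inv}, then invert $\sB(n)$ via its Cholesky decomposition.

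The Step~4 analysis above yields both the Cholesky decomposition $\sB(n)=\sBL(n)\cdot 2\id\cdot\sBL(n)^T$ and the recursion $\b'_{j,k}=\b_{j,k}+\b_{j,k+1}$ (the binomial analog of Lemma \ref{thm: sS is sum}); translated to matrices, this reads $\sBL(n)=\BL(n)\,(\id+X)$, where $X$ is the subdiagonal shift of Lemma \ref{lem: matrix arithmetic}. Writing $\sBL(n)^{-1}=(\id+X)^{-1}\,\BL(n)^{-1}$ and invoking Lemma \ref{lem: matrix arithmetic}\eqref{itm: L-2} together with \eqref{eq: R-inv}, I would express the $(j,k)$ entry as
\[
(\sBL(n)^{-1})_{j,k}=(-1)^{j+k}\sum_{i=k}^{j}\left[\,2\binom{i+k}{i-k}-\binom{i+k-1}{i-k}\right].
\]
After the substitution $\ell=i-k$, two applications of the hook identity in \eqref{eq: binom-identities}---with $(p,q)=(j+k,j-k)$ and $(p,q)=(j+k-1,j-k)$, respectively---collapse this to $(-1)^{j+k}\bigl[2\binom{j+k+1}{j-k}-\binom{j+k}{j-k}\bigr]$, which is \eqref{eq: sR-inv}.

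For \eqref{eq: sB-inv}, I would invert the Cholesky decomposition: $\sB(n)^{-1}=\tfrac12\,\sBL(n)^{-T}\,\sBL(n)^{-1}$. Its $(j,k)$ entry is then $\tfrac12\sum_i(\sBL(n)^{-1})_{i,j}(\sBL(n)^{-1})_{i,k}$, and since $\sBL(n)^{-1}$ is lower triangular the summation range collapses to $\max\{j,k\}\le i\le n-1$. Substituting \eqref{eq: sR-inv} into both factors then yields \eqref{eq: sB-inv} verbatim.

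The only real subtlety is the edge behavior of the hook identity when $k=0$: one of the telescoping sums becomes $\sum_{\ell=0}^{j}\binom{\ell-1}{\ell}$, which must evaluate to $1=\binom{j}{j}$ under the convention $\binom{-1}{0}=1$ adopted in the footnote to Proposition \ref{thm: B-inv}. With this in force, the computation is uniform in $k$ and no casework is needed.
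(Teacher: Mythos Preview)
Your proof is correct and uses the same core ingredients as the paper (Lemma~\ref{lem: matrix arithmetic}\eqref{itm: L-2}, formula~\eqref{eq: R-inv}, and the hook identity), arriving at the identical intermediate sum before simplification. The one substantive difference is that you invoke the uniform factorization $\sBL(n)=\BL(n)(\id+X)$ directly, whereas the paper writes $\sBL(n)$ in $(1,n{-}1)$ block-triangular form and treats the first column separately. Your route is in fact the cleaner one: the $\tfrac12$ appearing in the $(1,0)$ position of the displayed matrices in Step~4, and the corresponding formula $\b'_{j,0}=\b_{j,0}+\tfrac12\b_{j,1}$, are typos---the correct entry is $1$ and the recursion $\b'_{j,k}=\b_{j,k}+\b_{j,k+1}$ holds for all $k\ge 0$ (as one checks numerically: $\b'_{1,0}=3=\b_{1,0}+\b_{1,1}$, not $2.5$). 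With that corrected, the block detour is unnecessary and your direct argument is what the paper's own Step~4 actually yields. Your handling of the $k=0$ edge case via the convention $\binom{-1}{0}=1$ is also consistent with the paper's footnote to Proposition~\ref{thm: B-inv}.
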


\begin{proof}
We write $\sBL(n)$ in $(1,n{-}1)$-block-triangular form, and compute its inverse in blocks. Most of the steps proceed by analogy with Corollary \ref{thm: sC-inv}, using Lemma \ref{lem: matrix arithmetic}\eqref{itm: L-2} and \eqref{eq: R-inv}. The computations for the first column are a bit more intricate, but ultimately simplify to a uniform expression:
\begin{equation*}
	\bigl(\sBL(n)^{-1}\bigr)_{j,k} = \sum_{s=0}^j (-1)^s(-1)^{j-s+k}\left[2\binom{j-s+k}{j-s-k} - \binom{j-s+k-1}{j-s-k}\right]. 
\end{equation*}
The hook identity \eqref{eq: binom-identities} yields \eqref{eq: sR-inv}, and \eqref{eq: sB-inv} then follows from the Cholesky decomposition $\sB = \sBL\cdot\diag{2,2,\dotsc,2}\cdot\sBL^{T}$.
\end{proof}

%
%
\section{Decay of Riesz transforms}
\label{Decay of Riesz transforms}
In this section we address the decay of Riesz transforms, 
which is key to the proof of Theorem \ref{main}. 
There is a vast literature concerning decay properties of Riesz transforms of functions, often by way of 
 determining weights $w:\reals^d\to [0,\infty)$ for which the Riesz transforms 
are bounded on $L_p(w)$, $1<p<\infty$ 
(absent extra restrictions, like moment conditions, these are the Muckenhoupt weights). 
The article \cite{AdamsE} identifies
weights for which the $\Riesz_j$, when restricted to functions with vanishing moment conditions, 
are bounded on $L_p(w)$. A recent article that investigates 
decay of Riesz transforms of wavelet systems is \cite{WardUnser}.

 We are concerned with establishing pointwise decay of Riesz transforms. To this end, we work with spaces 
 $\Space_{J,\delta}$, which are more restrictive than those used by Adams in \cite{AdamsE}. 
 The main goal of this section is the basic decay result Lemma \ref{RTE}.
It can be developed from scratch with little effort, however we could not find a suitable result in the literature.
For the sake of completeness, we include it here. 

\subsection{Riesz transforms} The $j$th Riesz transform 
can be expressed on $C^1(\reals^N) \cap L_1(\reals^N)$ as
$
\Riesz_j: 
f
\mapsto 
c_N \lim_{\epsilon\to 0^+}\int_{\reals^N\setminus B(0,\epsilon)} \frac{y_jf(\cdot-y)}{|y|^{N+1}} \dif y
$ 
for some positive constant $c_N$.
Thus it is a convolution operator  
$\Riesz_j f = f*\riesz_j$
using the tempered distribution 
$\riesz_j(y) := c_{N} \mathrm{p.v.} ({y_{j}}/{|y|^{N+1}})$
(cf. \cite[Section 4.1.4]{Grafakos} for a discussion).
Because
$\riesz_j$ is smooth on $\reals^N\setminus\{0\}$,
this implies that  for compactly supported $h$, each $\Riesz_j h$ is $C^{\infty}$ outside of $ \mathrm{supp}\,h$. 

If 
$f$ is in 
$C^{\epsilon}(\reals^N)\cap L_p(\reals^N)$, with $p<\infty$, 
then $|\Riesz_jf(x)|$ is controlled by $\|f\|_p + [f]_{\epsilon,x}$, since 
%
\begin{equation*}
|\Riesz_jf(x)|
 \le
 \left|\int_{\reals^N\setminus B(0,1)}\frac{y_{j}f(x-y)}{|y|^{N+1}}\dif y\right|
+
\left|\lim_{\epsilon\to 0} \int_{B(0,1)\setminus B(0,\epsilon)}
\frac{y_{j}\bigl(f(x-y) - f(x)\bigr)}{|y|^{N+1}}\dif y\right|.
\end{equation*}
%
Here, we have used the fact that $y\mapsto f(x)\frac{y_{j}}{|y|^{N+1}}$ is odd
and that $y\mapsto \frac{y_{j}}{|y|^{N+1}}$ lies in $L_{p'}\bigl(\reals^N\setminus B(0,1)\bigr)$ for
$p'>1$.
Indeed, for $k\ge 0$ and $f\in C^{k+\epsilon}(\reals^N)\cap W_p^k(\reals^N)$, if $|\alpha|\le k$ then
%
\begin{equation}
|D^{\alpha} \Riesz_j f(x)|
\le 
C 
\left( 
     \int_{\reals^N\setminus B(0,1)}\frac{|D^{\alpha}f(x-y)|}{|y|^{N}}\dif y   + | f|_{C^{|\alpha|+\epsilon}\bigl(B(x,1)\bigr)}
\right)
\label{riesz_bound}
\end{equation} 
for some constant $C$ depending on $\epsilon$ and $N$.
%
\subsection{Riesz transform of compactly supported functions} 
For $f\in L_1(\reals^N)$ with compact support, the estimate 
$|D^{\alpha} \Riesz_j f (x) | \le c_{N} \|f\|_1 \bigl(\dist(x,\supp{f})\bigr)^{-(N+|\alpha|)}$ holds for $x$ sufficiently far from $\supp{f}$.

When $f$ is compactly supported and has many vanishing moments, $\Riesz_j f$ 
exhibits a faster rate of decay. Specifically, for $f\perp \Pi_L$ and $x$ far from $\supp{f}$, 
$|D^{\alpha} \Riesz_j f (x) | = \mathcal{O}(|x|^{-(L+N+1+|\alpha|)})$ at $\infty$.

%
%
\begin{lemma}\label{riesz_vanishing_moment}
Assume $f$ is bounded, has compact support and satisfies  $f\perp\Pi_L(\reals^N)$.
For a multi-index $\alpha$ there is a constant $C$ depending on $L$, $|\alpha|$ and $N$ so that
\begin{equation*}
\left| D^{\alpha}\Riesz_jf(x) \right| 
\le C\,
\|f\|_{\infty}
\bigl(\diam\bigl(\mathrm{supp}(f)\bigr)\bigr)^{L+N+1}\,
\dist\bigl(x,\mathrm{supp} (f)\bigr)^{-(L+N+1+|\alpha|)}.
\end{equation*}
\end{lemma}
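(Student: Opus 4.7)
The plan is to exploit the vanishing moment condition on $f$ to subtract a Taylor polynomial from the Riesz kernel, and then control the Taylor remainder using the homogeneity of derivatives of $\riesz_j$. This is a standard technique for obtaining decay of singular integrals of functions with many vanishing moments.

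First, pick a point $y_0$ in $\supp f$ (for instance, a point realizing a small enclosing ball). For $x$ with $\dist(x,\supp f) > 0$, the function $y \mapsto D^\alpha_x \riesz_j(x-y)$ is smooth on a neighborhood of $\supp f$, so no principal value issues arise and I can write
\[
D^\alpha \Riesz_j f(x) \;=\; \int_{\supp f} f(y)\, D^\alpha_x \riesz_j(x-y)\, \dif y .
\]
Let $P_L(y)$ denote the Taylor polynomial of $y\mapsto D^\alpha_x \riesz_j(x-y)$ of degree $L$ expanded about $y_0$; it is a polynomial in $y$ of degree at most $L$. The assumption $f\perp\Pi_L(\reals^N)$ lets me subtract $P_L(y)$ inside the integral without changing its value:
\[
D^\alpha \Riesz_j f(x) \;=\; \int_{\supp f} f(y)\bigl( D^\alpha_x \riesz_j(x-y) - P_L(y) \bigr)\, \dif y .
\]

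Next, I bound the Taylor remainder. By the standard integral form of Taylor's theorem, $|D^\alpha_x \riesz_j(x-y) - P_L(y)| \le C |y-y_0|^{L+1}$ times the supremum over the segment from $y_0$ to $y$ of the $(L{+}1)$st order $y$-derivatives of $\riesz_j(x-\cdot)$. Since $\riesz_j$ is homogeneous of degree $-N$ away from the origin, Lemma \ref{Hom_decay}-type reasoning (or direct computation from the explicit formula for $\riesz_j$) shows that any $(|\alpha|+L+1)$st partial derivative of $\riesz_j$ at a nonzero point $z$ is bounded by $C |z|^{-(N+|\alpha|+L+1)}$. For $y$ in $\supp f$, the segment from $y_0$ to $y$ stays in $\supp f$, and each point $z$ on it satisfies $|x-z| \gtrsim \dist(x,\supp f)$. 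Therefore
\[
\bigl| D^\alpha_x \riesz_j(x-y) - P_L(y) \bigr| \;\le\; C\, \diam(\supp f)^{L+1}\, \dist(x,\supp f)^{-(N+|\alpha|+L+1)} .
\]

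Finally, integrating this pointwise bound over $\supp f$ and using $|f|\le \|f\|_\infty$ and $\mathrm{vol}(\supp f) \le C \diam(\supp f)^N$ yields the advertised bound
\[
|D^\alpha \Riesz_j f(x)| \;\le\; C\, \|f\|_\infty\, \diam(\supp f)^{L+N+1}\, \dist(x,\supp f)^{-(L+N+1+|\alpha|)} .
\]
The only mild subtlety in this argument is the homogeneity bookkeeping for the $(L{+}1)$st derivative of the Riesz kernel, but this is routine since $\riesz_j(z)= c_N z_j/|z|^{N+1}$ is an explicit smooth homogeneous function away from the origin, and differentiating a homogeneous function of degree $-N$ a total of $|\alpha|+L+1$ times produces one of degree $-(N+|\alpha|+L+1)$; there is no other serious obstacle.
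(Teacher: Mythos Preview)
Your argument is essentially the paper's own proof: expand $D^\alpha\riesz_j(x-\cdot)$ in a Taylor polynomial of degree $L$ about a point in the support, kill the polynomial with the moment condition, and bound the remainder using the homogeneity estimate $|D^\beta\riesz_j(z)|\le C|z|^{-(N+|\beta|)}$. The structure and the bookkeeping match.

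One small slip: you assert that ``the segment from $y_0$ to $y$ stays in $\supp f$,'' which is false unless the support is convex. What you actually need (and what the paper implicitly uses via its integral remainder) is that every point $z$ on that segment satisfies $|x-z|\gtrsim \dist(x,\supp f)$. This follows not from $z\in\supp f$ but from $|z-y_0|\le\diam(\supp f)$ together with $|x-y_0|\ge\dist(x,\supp f)$, which gives $|x-z|\ge \dist(x,\supp f)-\diam(\supp f)\ge \tfrac12\dist(x,\supp f)$ once $\dist(x,\supp f)\ge 2\,\diam(\supp f)$. Since the lemma is only ever invoked in that far-field regime (and the paper's proof has the same implicit restriction), this is a harmless correction.
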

\begin{proof}
Fix $x\notin \supp{f}$.
Let 
$P$
be the Taylor polynomial of degree $L$ 
to the function 
$
y\mapsto 
\left(D^{\alpha} \riesz_j\right)(x-y)
$, centered at some $\overline{x}$ (assume it is near to the support of $f$).
Thus, we have
$
\left(D^{\alpha} \riesz_j\right)(x-y)
= P(y) +\Err_{L} (y)
$ 
with $\Err_{L}$ the remainder
from Taylor's theorem.
One can estimate this remainder by taking partial derivatives of order $L+1$ of  $D^{\alpha}\riesz_j$
noting that, by homogeneity, $|D^{\beta} \riesz_j(x)|\le C |x|^{-(N+|\beta|)}$ for $x\ne0$ and $C$ depending on $N$ and $\beta$. 
Consequently,
%
\begin{equation*}
| \Err_{L} (y)| 
\le
C \int_0^1 \frac{|\overline{x}-y|^{J+1}}{|x-\overline{x} +t(\overline{x}-y)|^{L+N+|\alpha|+1}} (1-t)^{J}\dif t .
\end{equation*}
The lemma follows from the fact that
$
D^{\alpha}\Riesz_j f(x)
 = 
c_{N}   
\left\langle 
f, 
\Err_{L} 
\right \rangle.
$ 
\end{proof}
%

\subsection{Decomposition}
Recall the space $\Space_{J,\delta}^s(\reals^N)$ introduced in \eqref{appendix_decay}.
In this subsection, we develop a simple moment preserving decomposition of a function into  pieces 
supported on dyadic balls.
Given $\delta\ge 0$ and $J = L+N$, with $L\in \nats$, we show that a function $f\in \Space_{J,\delta}^{s}(\reals^N)$ having $L$ vanishing moments
(i.e., $f\perp \Pi_L$)
can be decomposed into a number of compactly supported terms and a (globally supported) tail:
$f = 
f_0+f_1+\dots+f_{k-1}
+ \widetilde{f}_k$. 
For this decomposition, each term $f_{\ell}$ has support in $B(0,2^{\ell})$ and satisfies $f_{\ell} \perp \Pi_{L}$.
Finally, each term obeys the estimates given below in Lemma \ref{trunc_bounds}.

It is likely that the kind of decomposition we are after can be accomplished by using an atomic decomposition or  a suitable expansion in wavelets (see, e.g., \cite{WardUnser}). We avoid this machinery and develop a moment preserving decomposition from scratch.

Assume $f\in \Space_{J,\delta}^s(\reals^N)$.
For $R>1$ we split the function  $f$ into two parts $f = f_a +f_b$. This is done simply by truncating with a smooth
cut-off:
the compactly supported part $f_a(x) := f(x) \psi(x/R)$ has $\supp {f_a}\subset B(0,R)$ and satsifies 
\begin{equation}\label{compact_part}
|f\psi(\cdot/R)|_{C^{\sigma}(\reals^N)}\le 
C  \sup_{\gamma\le \sigma} R^{\gamma - \sigma} |f|_{C^{\gamma}(\reals^N)}.
\end{equation}
The tail $f_b(x) := \bigl(1-\psi(x/R)\bigr) f(x)$ satisfies,   for $\sigma\le s$ and for $\rho\ge0$,
%
\begin{eqnarray}\label{fb_decay}
|f_b|_{C^{\sigma}(\reals^N\setminus B(0,\rho))} 
&\le& 
C \|f\|_{ \Space_{J,\delta}^s(\reals^N)}
\left(R+\rho\right)^{-(J+\sigma)}  \bigl( \log \bigl(e+ \max(R, \rho)\bigr)\bigr)^{-\delta} .\label{tail}
\end{eqnarray}
%
In other words, $\|f_b\|_{\Space_{J,\delta}^s(\reals^N)}\le C\|f\|_{\Space_{J,\delta}^s(\reals^N)}$.

We consider  a family of smooth functions 
$(\Phi_{\beta})_{|\beta|\le L}$, supported in $B(0,1/2)$ 
and dual to the monomials: $\int_{B(0,1/2)}  x^{\gamma} \Phi_{\beta}(x)\dif x = \delta_{\beta,\gamma}$.
Let 
$c_{\beta,R}(f) 
:= 
\int_{\reals^N} x^{\beta} 
f_b(Rx)\,
\dif x $
 and note that
this equals 
$R^{-(N+|\beta|)} 
\int_{\reals^N} x^{\beta} 
f_b(x)
\dif x
$. 
Applying the radial decay of $f_b$ from \eqref{fb_decay}, we obtain 
%
\begin{equation} \label{moment}
|c_{\beta,{R} }(f)|
\le
C\|f\|_{ \Space_{J,\delta}(\reals^N)} 
{R} ^{-J} \bigl(\log(e+ {R} )\bigr)^{1-\delta} .
\end{equation} 
%
Let 
$
P_{R}  f(x) 
:= 
\sum_{|\beta\le L}    c_{\beta,{R} }(f)  \Phi_{\beta}\left({x/{R} }\right).
$
It follows that $\int p(x) P_{{R} } f(x) \dif x = \int p(x)f_{b}(x) \dif x$ for all $p\in \Pi_L$. 
It is not hard to see that for any $\sigma \ge 0$,
%
%
  \begin{equation}\label{PR_est}
| P_{R}f|_{C^{\sigma}(\reals^N)}
\le
   C
   \|f\|_{ \Space_{J,\delta}(\reals^N)} R^{-(J+\sigma)}
   \bigl(\log(e+ R)\bigr)^{1-\delta},
\end{equation}
with the constant $C$ depending on  the family 
 $(\Phi_{\beta})_{|\beta|\le L}$ and $\sigma$.
%

 The scaled truncation operator is 
%
\begin{equation*}
 \T_R f
 := 
\psi\left(\cdot/R\right) f+ P_R f .
\end{equation*}
 This is simply the truncation by the cut-off function $\psi\left(\cdot/R\right)$, plus a 
 projection which preserves the original moment conditions.
%

%
%
\begin{lemma}\label{truncation}
For $R\ge 1$ and $f\in \Space_{J,p}(\reals^N)$,  
$f -\T_{R}f$ annihilates polynomials of 
degree $L$. 
Moreover, $\supp {\T_{R} f}\subset B(0,{R})$ and 
%
\begin{equation*}
| f - \T_{R} f|_{C^{\sigma}(\reals^N)}\le 
    C\|f\|_{\Space_{J,\delta}^s(\reals^N)} 
    {R}^{-(J+\sigma)} \bigl(\log(e+ {R})\bigr)^{1-\delta}.
\end{equation*}
%
\end{lemma}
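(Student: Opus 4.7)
The plan is to reduce everything to the decomposition $\T_R f = \psi(\cdot/R)f + P_R f$ already introduced. Writing $f = f_a + f_b$ as above, we immediately have $f - \T_R f = f_b - P_R f$, so the three claims reduce to properties of this difference.

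First I would check the support statement, which is essentially a bookkeeping step: $\psi(\cdot/R)f$ is supported in $B(0,R)$ (since $\psi$ is supported in the unit ball and $R\ge 1$), while each $\Phi_\beta(\cdot/R)$ is supported in $B(0,R/2)$ by construction, so $\supp{\T_R f}\subset B(0,R)$.

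Next I would verify the moment condition. Given $\gamma$ with $|\gamma|\le L$, a direct substitution $y=x/R$ gives $\int x^\gamma \Phi_\beta(x/R)\,dx = R^{N+|\gamma|} \delta_{\beta,\gamma}$ by the duality $\int y^\gamma \Phi_\beta(y)\,dy = \delta_{\beta,\gamma}$. Hence $\int x^\gamma P_R f(x)\,dx = R^{N+|\gamma|} c_{\gamma,R}(f)$, and by the defining formula $c_{\gamma,R}(f) = R^{-(N+|\gamma|)}\int x^\gamma f_b(x)\,dx$ this equals $\int x^\gamma f_b(x)\,dx$. Therefore $\int p(x)\,(f_b - P_R f)(x)\,dx = 0$ for every $p\in \Pi_L$, which is precisely the claim that $f-\T_R f$ annihilates $\Pi_L$.

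Finally I would establish the Hölder seminorm bound by the triangle inequality $|f-\T_R f|_{C^\sigma(\reals^N)} \le |f_b|_{C^\sigma(\reals^N)} + |P_R f|_{C^\sigma(\reals^N)}$ and invoking the two estimates already in hand. Applying \eqref{fb_decay} with $\rho=0$ bounds the first term by $C\|f\|_{\Space_{J,\delta}^s} R^{-(J+\sigma)}\bigl(\log(e+R)\bigr)^{-\delta}$, while \eqref{PR_est} bounds the second term by $C\|f\|_{\Space_{J,\delta}^s} R^{-(J+\sigma)}\bigl(\log(e+R)\bigr)^{1-\delta}$. Since $\delta\ge 0$ makes the second (more singular) logarithmic factor dominant, the sum has the claimed form. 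There is no real obstacle here — once the decomposition is in place, the lemma is a direct consequence of \eqref{fb_decay}, \eqref{PR_est}, and the moment-duality property of $(\Phi_\beta)$; the only care needed is tracking which of the two logarithmic powers dominates.
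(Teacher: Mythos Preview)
Your proof is correct and follows essentially the same route as the paper: the support claim is immediate from the supports of $\psi(\cdot/R)f$ and $P_R f$, and the seminorm bound follows by the triangle inequality from \eqref{fb_decay} (which the paper also labels \eqref{tail}) and \eqref{PR_est}. You additionally spell out the moment computation $\int x^\gamma P_R f = \int x^\gamma f_b$, which the paper states just before the lemma rather than inside the proof; your derivation of it is accurate.
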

%
\begin{proof}
We note that $\supp {\T_{R} f} \subset \supp{f_a}\cup \supp{P_{R} f} \subset B(0,{R})$.
The bounds follow from \eqref{tail} and \eqref{PR_est}. 
\end{proof}


We can iterate applications of  the  
operator $\T_{R}$, obtaining initially
${f}_0 := \T_1 f$ and a complementary part
$\widetilde{f}_1  := f-f_0.$
For an integer $\ell\ge 1$,
%
\begin{equation*}
f_\ell 
:= 
\T_{2^\ell}(\widetilde{f}_{\ell})
\qand  
\widetilde{f}_{\ell+1}  
:= 
\widetilde{f}_{\ell}-f_\ell,
\end{equation*}
%
so that for any $k\ge 1$, we can write $f  = f_0+f_1+\dots+f_{k-1}+\widetilde{f}_k$, 
with a compactly supported component 
$ 
\sum_{\ell=0}^{k-1} f_{\ell}$.
Indeed, each $f_{\ell}$ is supported in $B(0,2^{\ell})$,
so the sum of the first $k$ terms  is supported in $ B(0,2^{k-1})$.

The tail can be written as 
$
  \widetilde{f}_{k} 
= 
  \widetilde{f}_{k-1} - \T_{2^{k-1}} \widetilde{f}_{k-1} 
=
  \bigl(1 - \psi(\cdot/2^{k-1})\bigr)     \widetilde{f}_{k-1}
  - 
  P_{2^{k-1}} \widetilde{f}_{k-1}
$.
The first term has support in $\reals^{N}\setminus B(0,2^{k-2})$ while the second is supported in $B(0,2^{k-2})$.
Because $\widetilde{f}_{k-1} = \widetilde{f}_{k-2} = \dots = f$ outside of  $ B(0,2^{k-1})$,
this leads to the expression
%
\begin{equation*}
\widetilde{f}_{k}  
= 
\bigl(1 - \psi\left({\cdot}/{2^{k-1}}\right)\bigr)f
  - 
P_{2^{k-1}} \widetilde{f}_{k-1}.
\end{equation*}
%
Because the difference
$
 f-\widetilde{f}_{k-1}
 $ 
has support in $B(0,2^{k -2})$, it is in the kernel of $P_{2^{k-1}}$.
Consequently,
$
P_{2^{k-1}}
\widetilde{f}_{k-1}
= P_{2^{k-1}}f ,
$
and the tail can be computed directly from $f$ in one step:
\begin{equation}\label{tail_formula}
\widetilde{f}_{k}  = \bigl(1 - \psi\left(\cdot/{2^{k-1}}\right)\bigr) f- P_{2^{k-1}} f
=
 f - \T_{2^{k-1}}f.
\end{equation}
%
It is worth noting that 
on the punctured ball $|x|\ge 2^{k-1}$ the tail is simply $\widetilde{f}_{k}(x) = f(x)$, while
on the annulus 
$2^{k-2}\le |x|\le2^{k-1}$ the tail is $\widetilde{f}_{k}(x) = \bigl(1 - \psi\left({x}/{2^{k-1}}\right)\bigr) f(x)$.
%
%
\begin{lemma}\label{trunc_bounds}
Suppose $\delta>1$, $J=L+N$ and $f\in\Space_{J,\delta}^s(\reals^N)$  satisfies the  
moment condition $f\perp \Pi_L$.
For $|x|\le 2^{k}$ and $f_\ell,\widetilde{f}_{\ell}$ as above,
\begin{eqnarray}\label{A_D_decay}
\bigl|{f}_{\ell}\bigr|_{C^{\sigma}(\reals^N)}
&\le&
C 
\|f\|_{\Space_{J,p}^s(\reals^N)}\,
k^{1-\delta}\,
2^{-{k}(J+\sigma)}.
\end{eqnarray}
Likewise, for $|x|\le 2^{k-1}$,
\begin{eqnarray}\label{A_D_tail}
\bigl|\widetilde{f}_{k}\bigr|_{C^{\sigma}(\reals^N)}
&\le& 
C \|f\|_{\Space_{J,p}^s(\reals^N)}\,
k^{1-\delta}\,2^{-{k}(J+\sigma)},
\end{eqnarray}
%
while for $|x|>2^{k-1}$, $ \widetilde{f}_{k}(x) =f(x).$

%
\end{lemma}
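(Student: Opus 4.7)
My plan is to reduce both estimates to a single application of Lemma \ref{truncation}. The explicit formula \eqref{tail_formula} identifies $\widetilde{f}_k = f - \T_{2^{k-1}} f$; since $f \perp \Pi_L$ and $f \in \Space_{J,\delta}^s(\reals^N)$, applying Lemma \ref{truncation} with $R = 2^{k-1}$ yields immediately
$$|\widetilde{f}_k|_{C^\sigma(\reals^N)} \le C \|f\|_{\Space_{J,\delta}^s} \, 2^{-(k-1)(J+\sigma)} \bigl(\log(e+2^{k-1})\bigr)^{1-\delta}.$$
Because $1-\delta < 0$ and $\log(e+2^{k-1}) \asymp k$, the logarithmic factor is bounded by $C\,k^{1-\delta}$, and the mismatch between $k-1$ and $k$ in the exponent of $2$ is paid for by a constant. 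This produces \eqref{A_D_tail}.

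The pointwise identity $\widetilde{f}_k(x) = f(x)$ for $|x| > 2^{k-1}$ is read off from the decomposition $\widetilde{f}_k = (1-\psi(\cdot/2^{k-1}))f - P_{2^{k-1}} f$ in \eqref{tail_formula}. The projection $P_{2^{k-1}} f$ is supported in $B(0,2^{k-2})$, since each $\Phi_\beta$ is supported in $B(0,1/2)$; and by the choice of $\psi$, the complementary cut-off $1-\psi(\cdot/2^{k-1})$ reduces to $1$ for $|x|$ outside $B(0,2^{k-1})$. Only the term $f$ survives.

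For the finite pieces, I will use the telescoping identity $f_\ell = \T_{2^\ell}(\widetilde{f}_\ell) = \widetilde{f}_\ell - \widetilde{f}_{\ell+1}$, applying the tail bound above to both $\widetilde{f}_\ell$ and $\widetilde{f}_{\ell+1}$. The triangle inequality then gives
$$|f_\ell|_{C^\sigma(\reals^N)} \le |\widetilde{f}_\ell|_{C^\sigma} + |\widetilde{f}_{\ell+1}|_{C^\sigma} \le C \|f\|_{\Space_{J,\delta}^s} \, \ell^{1-\delta} 2^{-\ell(J+\sigma)},$$
with the $\widetilde{f}_\ell$-term dominating. I read the index $k$ in \eqref{A_D_decay} as a typographical slip for $\ell$; under this reading, the inequality is precisely \eqref{A_D_decay}. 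I expect no significant obstacle, as the argument is essentially bookkeeping around Lemma \ref{truncation}; the only subtlety worth flagging is ensuring that the factor $(\log(e+2^{k-1}))^{1-\delta}$ behaves monotonically as a power of $k$—which relies on $\delta > 1$, as hypothesized.
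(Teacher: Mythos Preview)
Your argument is correct and, for \eqref{A_D_tail} and the pointwise identity, coincides with the paper's proof: both invoke \eqref{tail_formula} and Lemma~\ref{truncation} with $R=2^{k-1}$, then absorb the logarithm into a power of $k$ using $\delta>1$. Your reading of the index $k$ in \eqref{A_D_decay} as a slip for $\ell$ is also the intended one (this is how the bound is applied in Lemma~\ref{RTE}).

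For \eqref{A_D_decay} you take a slightly different route. The paper expands $f_\ell = \psi(\cdot/2^\ell)\widetilde{f}_\ell + P_{2^\ell}\widetilde{f}_\ell$, observes that $P_{2^\ell}\widetilde{f}_\ell = P_{2^\ell}\bigl[(1-\psi(\cdot/2^{\ell-1}))f\bigr]$ (since the range of $P_{2^{\ell-1}}$ lies in the kernel of $P_{2^\ell}$), and then estimates each piece separately via \eqref{compact_part}, \eqref{PR_est} and \eqref{tail}. Your telescoping identity $f_\ell = \widetilde{f}_\ell - \widetilde{f}_{\ell+1}$ followed by two applications of the already-established tail bound is more economical and avoids revisiting the projector mechanics; the paper's decomposition, on the other hand, makes the support structure of $f_\ell$ more visible. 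Both yield the same estimate with the same parameter dependence.
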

\begin{proof}
The second inequality follows from  \eqref{tail_formula} and Lemma \ref{truncation}.
In fact, in the punctured space of radius  $2^{k-2}$, we have that 
$$
|\widetilde{f}_{k}|_{C^{\sigma}\bigl(\reals^N\setminus B(0,2^{k-2})\bigr)}
\le
\bigl|
  \bigl(1 - \psi({\cdot}/2^{k-1})\bigr) f 
\bigr|_{C^{\sigma}\bigl(\reals^N\setminus B(0,2^{k-2})\bigr)}
\le 
  C 
  \|f\|_{\Space_{J,p}^s(\reals^N)}\,k^{-\delta}\,2^{-{k}(J+\sigma)} .
$$

The inequality \eqref{A_D_decay} follows from the fact that
$
f_{k} = \T_{2^k}(\widetilde{f}_{k}) 
=
\psi(\cdot/2^{k}) \widetilde{f}_{k} + P_{2^{k}}  \widetilde{f}_{k}.
$ 
Because the kernel of $P_{2^{k}}$ includes the range of $P_{2^{k-1}},$ we have
$
f_{k} 
=
  \psi(\cdot/2^{k}) 
  \widetilde{f}_{k} 
+
  P_{2^{k}}
  \bigl[
    \bigl(1 - \psi({\cdot}/{2^{k-1}})
   \bigr) 
    f
  \bigr]
$.
Both terms can be estimated by the estimate on $\widetilde{f}_{k}$. The first by using \eqref{compact_part} and 
the second term can be estimated with the help of \eqref{PR_est} followed by \eqref{tail}. 
\end{proof}

%
%
\subsection{Riesz transform estimates for globally supported functions}
Using the moment preserving decompostion in conjunction with Lemma \ref{riesz_vanishing_moment} and \eqref{riesz_bound}
we can estimate the size and decay of a smooth function satisfying decay and moment
conditions.
%
%
\begin{lemma}\label{RTE}
Let $L$ be a non-negative integer, $\delta>2$ and assume 
$f\in \Space_{J,\delta}^{s}(\reals^N)$ 
with $J=L+N$ satisfies the moment condition $f\perp \Pi_L$. 
Then for $|\alpha|\le s$ we have
%
\begin{equation*}
|D^{\alpha}\Riesz_j f(x)|\le C \|f\|_{\Space_{J,\delta}^s(\reals^N)} (1+|x|)^{-(J+|\alpha|)} \left(\log(e+|x|)\right)^{2-\delta},
\end{equation*}
with the constant $C$ depending only on $L$, $\delta$, $s$ and $N$.
%
\end{lemma}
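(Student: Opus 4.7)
The plan is to combine the moment-preserving dyadic decomposition $f = f_0 + f_1 + \dots + f_{k-1} + \widetilde{f}_k$ constructed in the previous subsection with the two tools just built: Lemma \ref{riesz_vanishing_moment} (rapid pointwise decay of $\Riesz_j$ applied to a compactly supported function with vanishing moments) and the pointwise bound \eqref{riesz_bound} (controlling $D^\alpha \Riesz_j$ of a smooth, decaying input via its $L_1$ and H\"older data). For $|x|$ bounded, the statement is just uniform boundedness of $D^\alpha \Riesz_j f$, which is a direct consequence of \eqref{riesz_bound} applied to $f$ itself, since $f \in C^s(\reals^N) \cap L_1(\reals^N)$.

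For the substantive case of large $|x|$, I would fix an integer $k$ with $2^k \le |x| < 2^{k+1}$ and apply the decomposition at that scale. For each compactly supported piece $f_\ell$ with $0\le\ell\le k-1$, the inclusion $\supp{f_\ell}\subset B(0,2^\ell)$ together with $2^\ell \le |x|/2$ gives $\dist(x,\supp{f_\ell}) \ge |x|/2$. Pairing Lemma \ref{riesz_vanishing_moment} with the $L_\infty$ estimate $\|f_\ell\|_\infty \le C\|f\|_{\Space_{J,\delta}^s}(1+\ell)^{1-\delta}2^{-\ell J}$ from Lemma \ref{trunc_bounds}, and using $J = L+N$, yields
$$
|D^\alpha \Riesz_j f_\ell(x)| \le C\|f\|_{\Space_{J,\delta}^s}(1+\ell)^{1-\delta}\, 2^{\ell}\, |x|^{-(J+1+|\alpha|)}.
$$
Summing this essentially geometric series in $\ell$, the dominant term comes from $\ell$ near $k$, so the total contribution from the compactly supported pieces is bounded by $C\|f\|_{\Space_{J,\delta}^s}(\log|x|)^{1-\delta}|x|^{-(J+|\alpha|)}$, using $2^k\sim|x|$ and $k\sim\log|x|$.

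For the tail $\widetilde{f}_k$, I would apply \eqref{riesz_bound} directly. The local H\"older term $|\widetilde{f}_k|_{C^{|\alpha|+\epsilon}(B(x,1))}$ is easy: since $|x|\ge 2^k$, the ball $B(x,1)$ sits in $\{|z|>2^{k-1}\}$ where $\widetilde{f}_k\equiv f$, so the built-in decay of $\Space_{J,\delta}^s$ gives a bound of order $|x|^{-(J+|\alpha|+\epsilon)}(\log|x|)^{-\delta}$. The integral contribution $\int_{|y|\ge 1}|y|^{-N}|D^\alpha\widetilde{f}_k(x-y)|\,\dif y$ I would handle by the change of variables $z=x-y$ and a partition according to whether $z\in B(0,2^{k-1})$ (using the small uniform bound on $\widetilde{f}_k$ supplied by Lemma \ref{trunc_bounds}) or $z\notin B(0,2^{k-1})$ (using $\widetilde{f}_k=f$ and the radial decay in $\Space_{J,\delta}^s$). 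A further subdivision of the outer region into $|z|\le |x|/2$ versus $|z|>|x|/2$ handles the potential singularity of $|x-z|^{-N}$ near $x$.

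The main obstacle is precisely this integral estimate for the tail, since $\widetilde{f}_k$ is not compactly supported and Lemma \ref{riesz_vanishing_moment} cannot be invoked directly. The key observation is that integrating $|x-z|^{-N}$ over the annulus $\{1\le|x-z|\le |x|/2\}$ produces a logarithmic factor of order $\log|x|$, which one must absorb into the final log exponent. Carefully collecting all contributions yields an overall estimate of the form $C\|f\|_{\Space_{J,\delta}^s}(1+|x|)^{-(J+|\alpha|)}(\log(e+|x|))^{1-\delta}$; since $1-\delta<2-\delta$, this is strictly stronger than the stated bound and therefore implies the lemma.
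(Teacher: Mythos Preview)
Your approach is essentially the paper's: choose $k$ with $2^k\le|x|<2^{k+1}$, handle the compactly supported pieces $f_0,\dots,f_{k-1}$ via Lemma \ref{riesz_vanishing_moment} and Lemma \ref{trunc_bounds}, and handle the tail $\widetilde f_k$ via \eqref{riesz_bound} split into near and far regions. The only real differences are bookkeeping choices. First, for the pieces $f_\ell$ you retain the factor $2^{\ell}|x|^{-1}$ in the per-piece bound and recognize the resulting sum as essentially geometric; the paper instead uses the crude uniform bound $2^{\ell-k}\le 1$ and sums $\sum_{\ell<k}\ell^{1-\delta}$. Second, for the tail integral you change variables $z=x-y$ and split according to whether $z$ lies in $B(0,2^{k-1})$ (where $\widetilde f_k$ is governed by \eqref{A_D_tail}) or outside (where $\widetilde f_k=f$), whereas the paper splits in the $y$ variable and uses the global bound $\|D^\alpha\widetilde f_k\|_\infty\le Ck^{1-\delta}2^{-k(J+|\alpha|)}$ on the inner shell, picking up an extra factor of $k$ from $\int_1^{2^{k+2}}r^{-1}\,dr$. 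Your splitting avoids that loss: on the region where the logarithmic factor appears (namely $1\le|x-z|\le|x|/2$) one automatically has $|z|\ge|x|/2>2^{k-1}$, so $\widetilde f_k(z)=f(z)$ and the sharper bound $|D^\alpha f(z)|\le C(\log|x|)^{-\delta}|x|^{-(J+|\alpha|)}$ applies. The net effect is that your argument actually delivers the exponent $1-\delta$ on the logarithm rather than the paper's $2-\delta$; since $1-\delta<2-\delta$, this is a genuine (if minor) sharpening, and of course implies the lemma as stated.
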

\begin{proof}
Fix $k>1$ and let $2^{k}\le |x| < 2^{k+1}$.

For  ${\ell}< k$ we apply Lemma \ref{riesz_vanishing_moment} to $f_{\ell}$, 
noting that $\supp{f_\ell}\subset B(0,2^{\ell})$ implies
$\mathrm{diam}(\supp{f_{\ell}})\le 2^{\ell+1}$ and
 $\dist(x,\supp{ {f}_{\ell}}) \ge 2^{k-1}$, 
 while \eqref{A_D_decay} gives 
 $\|f_{\ell}\|_{\infty} \le  C \|f\|_{\Space_{J,\delta}^s}  \ell^{1-\delta} 2^{-{\ell}J}$. Thus,
%
\begin{eqnarray*}
|D^{\alpha} \Riesz_j  f_{\ell} (x) | 
&\le& 
 C  
 \|f\|_{\Space_{J,\delta}^s} 
 \ell^{1-\delta} 2^{-{\ell}J} 
 \times 
 2^{(\ell+1) (J+1)} 
 \times 
 2^{(1-k)(J+|\alpha|+1)} \\
&\le& 
 C  
 \|f\|_{\Space_{J,\delta}^s}\, 
 \ell^{1-\delta} 
 |x|^{-(J+|\alpha|)}. 
\end{eqnarray*}
%
Summing this gives 
$
|D^{\alpha} \Riesz_j \sum_{\ell =0}^{k-1} f_{\ell}|
< 
C \|f\|_{\Space_{J,\delta}^s} |x|^{-(J+|\alpha|)}(\log|x|)^{2-\delta}.
$

Applying \eqref{riesz_bound} to the tail $\widetilde{f}_{k}$ gives 
%
\begin{equation*}
|D^{\alpha}\Riesz_j \widetilde{f}_{k}(x)| 
\le 
C 
\left( 
   \int_{\reals^N\setminus B(0,1)}\frac{|D^{\alpha}\widetilde{f}_{k}(x-y)|}{|y|^{N}}\dif y
  + 
   |  \widetilde{f}_{k}|_{C^{|\alpha|+\epsilon}\bigl(B(x,1)\bigr)}
\right).
\end{equation*}
%
We can estimate the H{\"o}lder seminorm by
$
C\|f\|_{\Space_{J,\delta}^s(\reals^N)}  k^{1-\delta}2^{-k(J+|\alpha|+\epsilon)}
$
 directly from 
  \eqref{A_D_tail}.
The integral is split in two parts. For $|y|>2^{k+2}$, we have that 
$|y|/2 \le |y-x| $,  
and we obtain 
%
\begin{eqnarray*}
\int_{\reals^N\setminus B(0,2^{k+2})}\frac{|D^{\alpha}\widetilde{f}_{k}(x-y)|}{|y|^{N}}\dif y
&\le&
C
\int_{\reals^N\setminus B(0,2^{k+2})}
 {\|f\|_{\Space_{J,\delta}^s} |y|^{-(J+|\alpha|)} (\log |y|)^{-\delta}}{|y|^{-N}}
\dif y\\
&\le& 
C \|f\|_{\Space_{J,\delta}^s}k^{1-\delta} 2^{-k(J+|\alpha|)} .
\end{eqnarray*}
%
The final inequality follows from \eqref{A_D_tail} and by bounding the interior integral, over $y\in B(0,2^{k+2})$, by
%
\begin{eqnarray*}
\int_{B(0,2^{k+2})\setminus B(0,1)}\frac{|D^{\alpha}\widetilde{f}_{k}(x-y)|}{|y|^{N}}\dif y
&\le&
C\|f\|_{\Space_{J,\delta}^s}\int_1^{2^{k+2}}
{k^{1-\delta}2^{-k(J+|\alpha|)}}r^{-1}\dif r\\
&\le& 
C\|f\|_{\Space_{J,\delta}^s} k^{2-\delta}2^{-k(J+|\alpha|)},
\end{eqnarray*}
%
where we have employed the uniform estimate 
$\|D^{\alpha}\widetilde{f}_{k}\|_{\infty} \le C|f|_{\Space_{J,\delta}^s}  k^{1-\delta}2^{-k(J+|\alpha|)}$, 
which follows from 
\eqref{A_D_tail}.
\end{proof}
%
%
%
\section{Concluding Remarks}\label{sec: concluding}
In this section we make some generalizations of the approach to solving Dirichlet problems presented here.
We begin by considering a second natural family of boundary operators. After another detour through path counting, we finish by considering a generalization of the underlying elliptic differential operator. 

This is meant to demonstrate two things. First, that the previous method can be extended
to other settings  where determining auxiliary functions $\bfg$ from Dirichlet data $\bfh$ can be tied to solving a path counting problem.
Second, that the operators selected in \eqref{DirichletOperators} are exceptional in the sense that the resulting path counting problem yields a  nice solution.

\subsection{Alternative boundary conditions}\label{sec: alternative boundary conditions}
We now consider a polyharmonic Dirichlet problem using the (perhaps more natural) family of boundary differential operators ${\lambda}_j$ where
\begin{equation}\label{standard_D}
{\lambda}_ju(x_1\dots x_{d-1})  =(-1)^j \frac{\partial^j}{\partial x_d^j}u(x_1,\dots,x_{d-1},0).
\end{equation} 
In this case, we again seek a solution of the form 
$\sum_{j=0}^{m-1} \int_{\reals^{d-1}} g_j(\alpha) \lambda_{j,\alpha} \phi(x-\alpha) \dif \alpha$ 
(using the new boundary operators). To do so we consider the resulting system of integral equations
$\bfh = \mathcal{V} \bfg$. To solve this system, we consider the symbol $\sigma(\mathcal{V})$ 
of the operator $\mathcal{V}$, 
repeating the calculations of Sections \ref{Boundary Layer Potential Operators} and \ref{Boundary values of the boundary layer potentials}.
Each entry $v_{k,j}$ has symbol (when restricted to $g$ satisfying suitable decay, moment and smoothness conditions - the details of this are left to the reader)
$$\sigma(v_{k,j})(\xi') =
\frac{(-1)^{(j+k)/2}}{2\pi}\left( \int_\reals \frac{\zeta^{j+k}}{(1+\zeta^2)^{m}}\dif \zeta\right)
|\xi'|^{j+k+1-2m} .$$

The solution of the problem follows along the same lines as before.
The symbol of the operator $\mathcal{V}$ is
 $\sigma(\mathcal{V} ) = \bigl(\sigma(v_{k,j})\bigr)_{k,j} = A_1(\xi')\M A_2(\xi')$ with $A_1$ and $A_2$ exactly as before. 
 The matrix $\M$ has a checkerboard pattern, since
 $\M_{k,j}= \int_\reals {\zeta^{j+k}}{(1+\zeta^2)^{-m}}\dif \zeta=0$ when $j+k$ is odd. 
 The matrix $\M$ is invertible and the entries of its inverse provide the formulas for the
 auxiliary functions $g_j = \sqrt{- \Deltad}\sum_{k=0}^{m-1} (\M^{-1})_{j,k} \Deltad^{m-1-(j+k)/2}  h_k$.
 
The matrix $\M$ can be restructured into two blocks as in 
Section \ref{sec: path counting and invertibility}.
 These two blocks are essentially the same---they come from a single family of symmetric Hankel matrices. In short, we have
  $$\tilde\M = \begin{pmatrix} H_1 & 0 \\ 0 & H_2\end{pmatrix}, 
  \quad
  \text{with}
  \quad
  H_1 = (a_{j+k})_{0\le j,k\le \lfloor \frac{m-1}{2}\rfloor}
  \qand
  H_2 = 
 (a_{j+k+1})_{0\le j,k\le \lfloor\frac{m-2}{2}\rfloor},
 $$
 where 
 $$a_n =\int_\reals \frac{\zeta^{2n}}{(1+\zeta^2)^{m}}\dif \zeta\,.
 $$
 
The matrices $H_1$ and $H_2$ are totally positive. 
This follows easily from from \cite[Theorem 4.4]{Pinkus}, which states that a Hankel matrix is totally positive 
if and only if both it and its lower, left-hand submatrix (removing the top row and rightmost column) are positive 
definite.

\subsection{More on path counting}\label{sec: more path counting}
(We continue the discussion begun in Section \ref{sec: tpm}, as promised.) In the preceding examples---and the one that follows---we know by Theorem \ref{thm: TPM} that there exists a directed acyclic graph $G$ so that minors of $\tilde\M$ may be computed by path counting. In fact, there is a constructive way to arrive at such a graph, which we now illustrate.

Suppose $A = (a_{i,j})_{0\leq i,j<n}$ is a totally positive matrix. One constructs a ``planar network'' of vertices and labeled (\emph{weighted}) directed edges, illustrated for $n=3$ in Figure \ref{fig: planar network} and below, with $n$ origin and destination vertices chosen. Only the diagonal edges and central horizontal edges carry non-unity weight.  All edges are oriented eastward. (The reader should observe that the choices made for the $o_i$ and $d_j$ are non-permutable, and indeed remain so for any subsequences $(o_{i_1},\ldots,o_{i_r})$ and $(d_{j_1},\ldots, d_{j_r})$ of the chosen vertices.) 
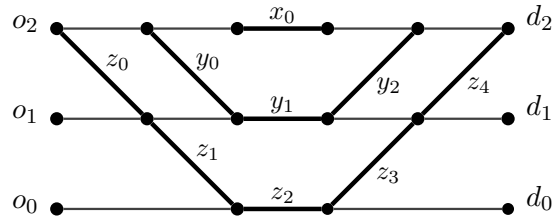
\begin{figure}[!hbt]
\psset{unit=1.2, linewidth=.025}
\begin{pspicture}(5.4,2.4)(-.2,-.2)
\cnode*(0,0){2.5pt}{p00}	\cnode*(0,1){2.5pt}{p01}	\cnode*(0,2){2.5pt}{p02} 
					\cnode*(1,1){2.5pt}{p11}	\cnode*(1,2){2.5pt}{p12} 
\cnode*(2,0){2.5pt}{p20}	\cnode*(2,1){2.5pt}{p21}	\cnode*(2,2){2.5pt}{p22} 
\cnode*(3,0){2.25pt}{p30}	\cnode*(3,1){2.5pt}{p31}	\cnode*(3,2){2.5pt}{p32} 
					\cnode*(4,1){2.5pt}{p41}	\cnode*(4,2){2.5pt}{p42} 
\cnode*(5,0){2.25pt}{p50}	\cnode*(5,1){2.5pt}{p51}	\cnode*(5,2){2.5pt}{p52} 

\ncline[linecolor=darkgray]{p00}{p20}
\ncline[linewidth=.05]{p20}{p30}\Aput[.5pt]{\small$z_2$}
\ncline[linecolor=darkgray]{p30}{p50}
\ncline[linecolor=darkgray]{p01}{p11}\ncline[linecolor=darkgray]{p11}{p21}
\ncline[linewidth=.05]{p21}{p31}\Aput[.5pt]{\small$y_1$}
\ncline[linecolor=darkgray]{p31}{p41}\ncline[linecolor=darkgray]{p41}{p51} 
\ncline[linecolor=darkgray]{p02}{p12}\ncline[linecolor=darkgray]{p12}{p22}
\ncline[linewidth=.05]{p22}{p32}\Aput[.5pt]{\small$x_0$}
\ncline[linecolor=darkgray]{p32}{p42}\ncline[linecolor=darkgray]{p42}{p52} 

\ncline[linewidth=.05]{p02}{p11}\Aput[.3pt]{\small$z_0$}
\ncline[linewidth=.05]{p11}{p20}\Aput[.3pt]{\small$z_1$}
\ncline[linewidth=.05]{p41}{p30}\Aput[.3pt]{\small$z_3$}
\ncline[linewidth=.05]{p52}{p41}\Aput[.3pt]{\small$z_4$}
\ncline[linewidth=.05]{p12}{p21}\Aput[.3pt]{\small$y_0$}
\ncline[linewidth=.05]{p42}{p31}\Aput[.3pt]{\small$y_2$}

\uput{0.20}[20](5,2){$d_2$}
\uput{0.20}[170](0,2){$o_2$}
\uput{0.20}[20](5,1){$d_1$}
\uput{0.20}[170](0,1){$o_1$}
\uput{0.20}[20](5,0){$d_0$}
\uput{0.20}[170](0,0){$o_0$}

\end{pspicture}
\caption{A labeled planar network with three non-permutable origins and destinations.}
\label{fig: planar network}
\end{figure}

To properly assign values to the weights $x_i,y_j,z_k$, we evidently must solve the following system of equations:
\[
\left\{\begin{array}{@{\,}l@{\ \quad\ }l@{\ \quad\ }l}
a_{0,0} = z_2, & a_{0,1} = z_2z_3, & a_{0,2} = z_2z_3z_4, \\[.5ex]
a_{1,0} = z_1z_2, & a_{1,1} = z_1z_2z_3 + y_1,  
		&  a_{1,2} = z_1z_2z_3z_4+y_1y_2 + y_1z_4, \\[.5ex]
a_{2,0} = z_0z_1z_2, & a_{2,1} = z_0(\ a_{1,1} \ ) +  y_0y_1,    
		& a_{2,2} = z_0(\ a_{1,2} \ ) + y_0\bigl(y_1y_2 + y_1z_4\bigr)+x_0. 
\end{array}\right.
\]
Now, the $z_i$ are clearly uniquely determined (and nonzero), since $A$ is totally positive. Thus one can solve for, say, $y_1$ as well (using the equation for $a_{1,1}$). If $y_1<0$, we are sunk (as the definition we have taken in Section \ref{sec: tpm} requires that all weights are nonnegative). However, we are lucky: $y_1$ is the ratio of minors $(a_{0,0}a_{1,1}-a_{1,0}a_{0,1})/a_{0,0}$, which is nonnegative by the total positivity of $A$. Similarly, $y_0$ will be a ratio of (a product of) minors of $A$. We leave further analysis of this example to the reader. The obvious extension to larger $n$ amounts to a constructive proof of Theorem \ref{thm: TPM} for totally positive matrices. (See \cite[Ch. 2]{FJ} for a discussion of Whitney's bidiagonal factorization and the extension to totally nonnegative matrices.)

Unfortunately, the planar networks produced from the above construction are not necessarily as ``elegant'' as the directed graphs presented in Section \ref{sec: path counting and invertibility}, even when the entries of $A$ are integers. We illustrate with the $H_1$ from  Section \ref{sec: alternative boundary conditions}, taking $m=5$ and first scaling by $2^{m-1}/\pi$. 
\begin{center}
$\displaystyle
\begin{pmatrix}
70 & 10 & 6 \\ 
10 & 6 & 10 \\ 
6 & 10 & 70 \end{pmatrix}$
\quad \quad $\longleftrightarrow$ \quad\quad\quad
\raisebox{-.5\height}{\psset{unit=1, linewidth=.025}
\begin{pspicture}(5.4,2.5)(-.2,-.5)
\cnode*(0,0){2.5pt}{p00}	\cnode*(0,1){2.5pt}{p01}	\cnode*(0,2){2.5pt}{p02} 
					\cnode*(1,1){2.5pt}{p11}	\cnode*(1,2){2.5pt}{p12} 
\cnode*(2,0){2.5pt}{p20}	\cnode*(2,1){2.5pt}{p21}	\cnode*(2,2){2.5pt}{p22} 
\cnode*(3,0){2.5pt}{p30}	\cnode*(3,1){2.5pt}{p31}	\cnode*(3,2){2.5pt}{p32} 
					\cnode*(4,1){2.5pt}{p41}	\cnode*(4,2){2.5pt}{p42} 
\cnode*(5,0){2.5pt}{p50}	\cnode*(5,1){2.5pt}{p51}	\cnode*(5,2){2.5pt}{p52} 

\ncline[linecolor=darkgray]{p00}{p20}
\ncline[linewidth=.05]{p20}{p30}\Aput[1pt]{\tiny$70$}
\ncline[linecolor=darkgray]{p30}{p50}
\ncline[linecolor=darkgray]{p01}{p11}\ncline[linecolor=darkgray]{p11}{p21}
\ncline[linewidth=.05]{p21}{p31}\Aput[.5pt]{\tiny$32/7$}
\ncline[linecolor=darkgray]{p31}{p41}\ncline[linecolor=darkgray]{p41}{p51} 
\ncline[linecolor=darkgray]{p02}{p12}\ncline[linecolor=darkgray]{p12}{p22}
\ncline[linewidth=.05]{p22}{p32}\Aput[.5pt]{\tiny$256/5$}
\ncline[linecolor=darkgray]{p32}{p42}\ncline[linecolor=darkgray]{p42}{p52} 

\ncline[linewidth=.05]{p02}{p11}\Aput[-1pt,npos=.65]{\tiny$3/5$}
\ncline[linewidth=.05]{p11}{p20}\Aput[-1pt,npos=.65]{\tiny$1/7$}
\ncline[linewidth=.05]{p41}{p30}\Aput[-.3pt,npos=.26]{\tiny$1/7$}
\ncline[linewidth=.05]{p52}{p41}\Aput[-.3pt,npos=.26]{\tiny$3/5$}
\ncline[linewidth=.05]{p12}{p21}\Aput[-1pt,npos=.65]{\tiny$7/5$}
\ncline[linewidth=.05]{p42}{p31}\Aput[-.3pt,npos=.26]{\tiny$7/5$}

\uput{0.20}[20](5,2){$d_2$}
\uput{0.20}[170](0,2){$o_2$}
\uput{0.20}[20](5,1){$d_1$}
\uput{0.20}[170](0,1){$o_1$}
\uput{0.20}[20](5,0){$d_0$}
\uput{0.20}[170](0,0){$o_0$}
\end{pspicture}
}
\end{center}
The appearance of binomial coefficients in the modified $H_1$ above suggests an alternative approach along the lines of the grid graphs in Section \ref{sec: path counting and invertibility}. 
We leave this search to the reader. 

\subsection{Alternative elliptic differential operator}\label{sec: alternative operator}
We now consider a boundary value problem of the form
\begin{equation}\label{second_PDE}
\begin{cases} 
  \L_m u(x) = 0,& \text{for $x\in \reals^{d-1}\times\reals_+$};\\
  \lambda_k u = h_k,& \text{for }k=0,\dots, m-1,\\
\end{cases}
\end{equation}
where $\L_m$ is a constant coefficient elliptic operator of the form $\L_m= \prod_{j=0}^{m-1} (\Delta - r_j^2)$, and the numbers $0<r_0^2< r_1^2< \dots<r_{m-1}^2$ are positive, distinct real numbers. 
In this case, we utilize the Dirichlet operators \eqref{standard_D} of the previous example 
and again consider a solution of the form 
$\sum_{j=0}^{m-1} \int_{\reals^{d-1}} g_j(\alpha) \lambda_{j,\alpha} \phi(x-\alpha) \dif \alpha$,
this time using a fundamental solution $\phi$ for $\L_m$. 

In this case, $(-1)^m\widehat{\phi}(\xi)>0$, 
and  the operators  $v_{k,j}g
=
\lambda_k\int_{\reals^{d-1}} g(\alpha) \lambda_{j,\alpha} \phi(\cdot-\alpha) \dif \alpha
$ 
can be expressed as
$$v_{k,j} g(x') = \frac{i^{j+k}(-1)^m}{(2\pi)^d}\int_{\reals^d}\frac{\xi_d^{j+k}\widehat{g}(\xi')e^{i\langle x',\xi'\rangle}}
{(|\xi|^2+r_0^2)\dots(|\xi|^2+r_{m-1}^2)} \dif \xi.$$
For odd $j+k$, this yields $v_{k,j}=0$. Otherwise, we have the symbol
\footnote{This may come as a surprise to some readers. We note that the symbol can be expressed as a divided difference:
$\sigma(v_{k,j})(\xi')) = [r_0^2,r_1^2,\dots,r_{m-1}^2](|\xi'|^2 +\cdot)^{\frac{j+k-1}{2}}$ (see \cite[Chapter 4 (7.7)]{DevLor}).
Because this functional annihilates polynomials of degree $m-1$, the behavior of $\sigma(v_{k,j})(\xi')$ as $|\xi'|\to \infty$ is $\mathcal{O}(|\xi'|^{j+k+1-2m})$, the same rate of decay as observed in the previous examples.}
$$
\sigma(v_{k,j})(\xi')
=
\frac12
\sum_{\ell =0}^{m-1} \frac{(|\xi'|^2 +r_{\ell}^2)^{\frac{j+k-1}{2}}}{\prod_{\nu \ne \ell} (r_{\nu}^2 - r_{\ell}^2)}.
$$

The symbol of $\mathcal{V}$ is checkerboard and Hankel. 
In this case, we make no attempt to factor the  symbol, although we permute the rows and columns to make use of the checkerboard Hankel structure:
$$
\widetilde {\sigma(\mathcal{V})(\xi') }= \begin{pmatrix} H_1(|\xi'|) & 0 \\ 0 & H_2(|\xi'|) \end{pmatrix}, 
\quad\text{with}\quad
(H_1(t))_{\nu,\mu} = a_{\nu+\mu}(t)
\qand
(H_2(t))_{\nu,\mu} = a_{\nu+\mu+1}(t),
$$
where
$$
a_n(t) = 
\frac12
\sum_{\ell =0}^{m-1} \frac{(t^2 +r_{\ell}^2)^{\frac{j+k-1}{2}}}{\prod_{\nu \ne \ell} (r_{\nu}^2 - r_{\ell}^2)}\,.
$$
For each $ t\ge0$, $H_1(t)$ and $H_2(t)$ are totally positive (as easily determined using \cite[Theorem 4.4]{Pinkus}), and so there is a planar network, with weights depending on $t$, governing the combinatorics of this situation as well.


\bibliographystyle{abbrv}
\bibliography{bibl}
\end{document}